\documentclass[10pt,amssymb,amsfonts,psfig]{amsart}
\bibliographystyle{amsalpha}
\baselineskip=24pt 
\usepackage{mathrsfs,epsfig,amsfonts,amssymb,color,amsmath,amscd,url,bm}
\usepackage[curve,color,graph]{xypic}
\usepackage[letterpaper,asymmetric,left=1.48in,right=1.48in,top=1.48in,bottom=1.48in,bindingoffset=0.0in]{geometry}


\newtheorem{thm}{Theorem}[section]
\newtheorem{cor}[thm]{Corollary}
\newtheorem{lem}[thm]{Lemma}
\newtheorem{prop}[thm]{Proposition}

\newtheorem{rem}[thm]{Remark}
\newtheorem{problem}[thm]{Problem}
\theoremstyle{remark}

\theoremstyle{definition}

\theoremstyle{plain}

\newtheorem*{thmC}{Global Lee-Yang-Fisher Current Theorem}
\newtheorem*{thmD}{Equidistribution Theorem for the DHL}
\newtheorem*{thmE}{Equidistribution Theorem}
\newtheorem*{thmENDOS}{Equidistribution for Endomorphisms \cite{FAVRE_JONSSON,DS}}
\newtheorem*{thmBIRAT}{Equidistribution for Birational Mappings \cite{FAVRE_GUEDJ,FAVRE_THESIS,DILLER}}

\newtheorem*{LY Theorem}{Lee-Yang Theorem}
\newtheorem*{LY Theorem2}{General Lee-Yang Theorem}
\newtheorem*{LY TheoremBC}{Lee-Yang Theorem with Boundary Conditions}
\newtheorem*{LY Theorem2BC}{General Lee-Yang Theorem with Boundary Conditions}
\newtheorem*{conj}{Conjecture}

\newcommand{\FOLDEXPONENT}{r}
\newcommand{\BADSET}{D_{\geq d}}
\newcommand{\BADSETCORE}{\mathcal{N}_{\geq d}}
\newcommand{\VERYBADSET}{D_{> d}}
\newcommand{\BISET}{I^-(f)}

\numberwithin{equation}{section}
\numberwithin{figure}{section}


\newcommand{\Zpart}{\mathsf{Z}} 

\newcommand{\PLapl}{\frac i{\pi} \di\dibar} 
\newcommand{\Regular}{N} 
\newcommand{\LYF}{\mathcal{S}^c} 
\newcommand{\Green}{S} 
\newcommand{\current}{T} 
\newcommand{\FoverT}{F^\#} 
\newcommand{\hatFoverT}{\hat{F}^\#} 

\newcommand{\Rmig}{R} 
\newcommand{\Cmig}{C} 
\newcommand{\Cmigbl}{C_0}
\newcommand{\Cmigtl}{C_1}
\newcommand{\TOPmig}{\mathrm{T}} 
\newcommand{\BOTTOMmig}{\mathrm{B}} 
\newcommand{\FIXmig}{b} 
\newcommand{\CFIXmig}{e} 
\newcommand{\INDmig}{a} 

\newcommand{\CROSSING}{c} 

\newcommand{\Rphys}{\mathcal R} 
\newcommand{\Cphys}{\mathcal C} 
\newcommand{\Solidmig}{SC}

\newcommand{\Secmig}{P}

\newcommand{\BOTTOMphys}{\mathcal B} 
\newcommand{\FIXphys}{\beta} 
\newcommand{\CFIXphys}{\eta} 
\newcommand{\Imig}{G} 




\newcommand{\Line}{L}
\newcommand{\Lzero}{L_0}
\newcommand{\Lone}{L_1}
\newcommand{\Ltwo}{L_2}
\newcommand{\Lthree}{L_3}
\newcommand{\Lfour}{L_4}

\newcommand{\ex}{{\mathrm{exc}}}

\font\nt=cmr7

\def\note#1
{\marginpar
{\nt $\leftarrow$
\par
\hfuzz=20pt \hbadness=9000 \hyphenpenalty=-100 \exhyphenpenalty=-100
\pretolerance=-1 \tolerance=9999 \doublehyphendemerits=-100000
\finalhyphendemerits=-100000 \baselineskip=6pt
#1}\hfuzz=1pt}


\newcommand{\correspond}{\Psi}

\newcommand{\di}{\partial}
\newcommand{\dibar}{\bar\partial}

\newcommand{\ra}{\rightarrow}

\def\ssk{\smallskip}
\def\msk{\medskip}
\def\bsk{\bigskip}

\def\sm{\smallsetminus}

\newcommand{\tl}{\tilde}

\newcommand{\vol}{\operatorname{vol}}

\newcommand{\supp}{\operatorname{supp}}

\newcommand{\area}{\operatorname{area}}

\newcommand{\isom}{\approx}

\def\loc{{\mathrm{loc}}}

\newcommand{\eps}{{\epsilon}}

\newcommand{\De}{{\Delta}}
\newcommand{\de}{{\delta}}
\newcommand{\la}{{\lambda}}

\newcommand{\Om}{{\Omega}}
\newcommand{\om}{{\omega}}

\newcommand{\EE}{{\mathcal E}}

\newcommand{\FF}{{\mathcal F}}
\newcommand{\GG}{{\mathcal G}}

\newcommand{\HH}{{\mathcal H}}

\newcommand{\SSS}{{\mathcal S}}

\newcommand{\WW}{{\mathcal W}}

\newcommand{\C}{{\Bbb C}}

\newcommand{\D}{{\Bbb D}}

\newcommand{\N}{{\Bbb N}}

\newcommand{\R}{{\Bbb R}}
\newcommand{\T}{{\Bbb T}}

\newcommand{\Z}{{\Bbb Z}}

\newcommand{\LINV}{L_{\rm inv}}
\newcommand{\LLINV}{{\mathcal L}_{\rm inv}}

\def\B0{{\mathbf{0}}}

\newcommand{\Jac}{\operatorname{Jac}}

\newcommand{\Hol}{{\rm Hol}}

\newcommand{\CP}{ {\Bbb{CP}}   }



\catcode`\@=12

\def\Empty{}
\newcommand\oplabel[1]{
  \def\OpArg{#1} \ifx \OpArg\Empty {} \else
  	\label{#1}
  \fi}
		
%

%

\newcommand{\comm}[1]{}
\newcommand{\comment}[1]{}


\begin{document}

\bigskip\bigskip

\title[Lee-Yang-Fisher zeros ] {Lee-Yang-Fisher zeros for the DHL \\  
         and 2D rational dynamics, \\
        {\tiny II. Global Pluripotential Interpretation.}}

\author{Pavel Bleher}

\author{Mikhail Lyubich}

\author{Roland Roeder$^\dag$}
\thanks{$^\dag$Corresponding Author.  Email: \url{roederr@iupui.edu}}

\date{\today}

\address{Pavel Bleher \\ IUPUI Department of Mathematical Sciences \\ 402 N. Blackford St., LD270 \\ Indianapolis, IN 46202-3267.}
\email{pbleher@iupui.edu}

\address{Mikhail Lyubich \\ Mathematics Department and IMS, Stony Brook University, Stony Brook, NY 11794.}
\email{mlyubich@math.sunysb.edu}

\address{Roland Roeder \\ IUPUI Department of Mathematical Sciences \\ 402 N. Blackford St., LD270 \\ Indianapolis, IN 46202-3267.}
\email{roederr@iupui.edu}

 \begin{abstract} 
In a classical work of the 1950's, Lee and Yang proved that for fixed
nonnegative temperature, the zeros of the partition functions of a
ferromagnetic Ising model always lie on the unit circle in the complex magnetic
field.   Zeros of the partition function in the complex temperature were then
considered by Fisher, when the magnetic field is set to zero.  Limiting
distributions of Lee-Yang and of Fisher zeros are physically important as they
control phase transitions in the model.  One can also consider the zeros of the
partition function simultaneously in both complex magnetic field and complex
temperature.    They form an algebraic curve called the Lee-Yang-Fisher (LYF)
zeros.  In this paper we continue studying their limiting distribution for the
Diamond Hierarchical Lattice (DHL). In this case, it can be described in terms
of the dynamics of an explicit rational function $\Rmig$ in two variables (the
Migdal-Kadanoff renormalization transformation).  We study properties of the
Fatou and Julia sets of this transformation and then we prove that the
Lee-Yang-Fisher zeros are equidistributed with respect to a dynamical
$(1,1)$-current in the projective space.  The free energy of the lattice gets
interpreted as the pluripotential of this current.  We also prove a more
general equidistribution theorem which applies to rational mappings
having indeterminate points, including the Migdal-Kadanoff renormalization
transformation of various other hierarchical lattices.
\end{abstract}

\setcounter{tocdepth}{1}
 
\maketitle
\tableofcontents




\section{Introduction}

\subsection{Lee-Yang-Fisher zeros}

We will begin with providing a brief background on the Lee-Yang-Fisher zeros
that continues the discussion in Part I  \cite{BLR1}.

We consider the Ising model on a finite graph $\Gamma$ and its partition
function  $\Zpart_\Gamma$, which is a Laurent polynomial in two variables
$(z,t)$, where $z=e^{-h/T}$ is a ``field-like'' variable and $t=e^{-2J/T}$ is
``temperature-like'' one.   They are expressed in terms of the externally
applied magnetic field $h$, the temperature $T$, and the coupling constant $J >
0$; see \cite[Section~2.1]{BLR1} for more details.

For a fixed $t\in [0,1]$, the complex zeros of $\Zpart(z,t)$ in $z$ are called the
Lee-Yang zeros.  
The  Lee-Yang Theorem \cite{YL,LY} asserts that for the
ferromagnetic Ising model on any graph, {\it the
zeros of the partition function lie on the unit circle $\T$}  in the complex
plane.

If we have  a hierarchy of graphs $\Gamma_n$ of increasing size,
then under fairly general conditions, 
zeros of the partition functions $\Zpart_n= \Zpart_{\Gamma_n}$ will have a limiting
distribution $\mu_t$ on the unit circle.
This distribution captures phase transitions in the model.  

Instead of freezing temperature, one can freeze the external field, and
study  zeros of $\Zpart(z,t)$ in the $t$-variable.  They are called {\it Fisher zeros}
as they were first studied by Fisher  for the regular two-dimensional lattice,
see \cite{Fis0,Brascam_and_Kunz}.  Similarly to the Lee-Yang zeros,
asymptotic distribution of the Fisher zeros is supported on the singularities of the
magnetic observables, and  is thus related to phase transitions in the model.
However, Fisher zeros do not lie on the unit circle any more.
For instance, for the regular 2D lattice at zero field (corresponding
to $z=1$), the asymptotic distribution lies on the union of two
{\it Fisher circles} depicted on  Figure~\ref{FIG:FISHER_CIRCLES}.

\begin{figure}[htp]
\begin{center}

\begin{picture}(0,0)%
\includegraphics{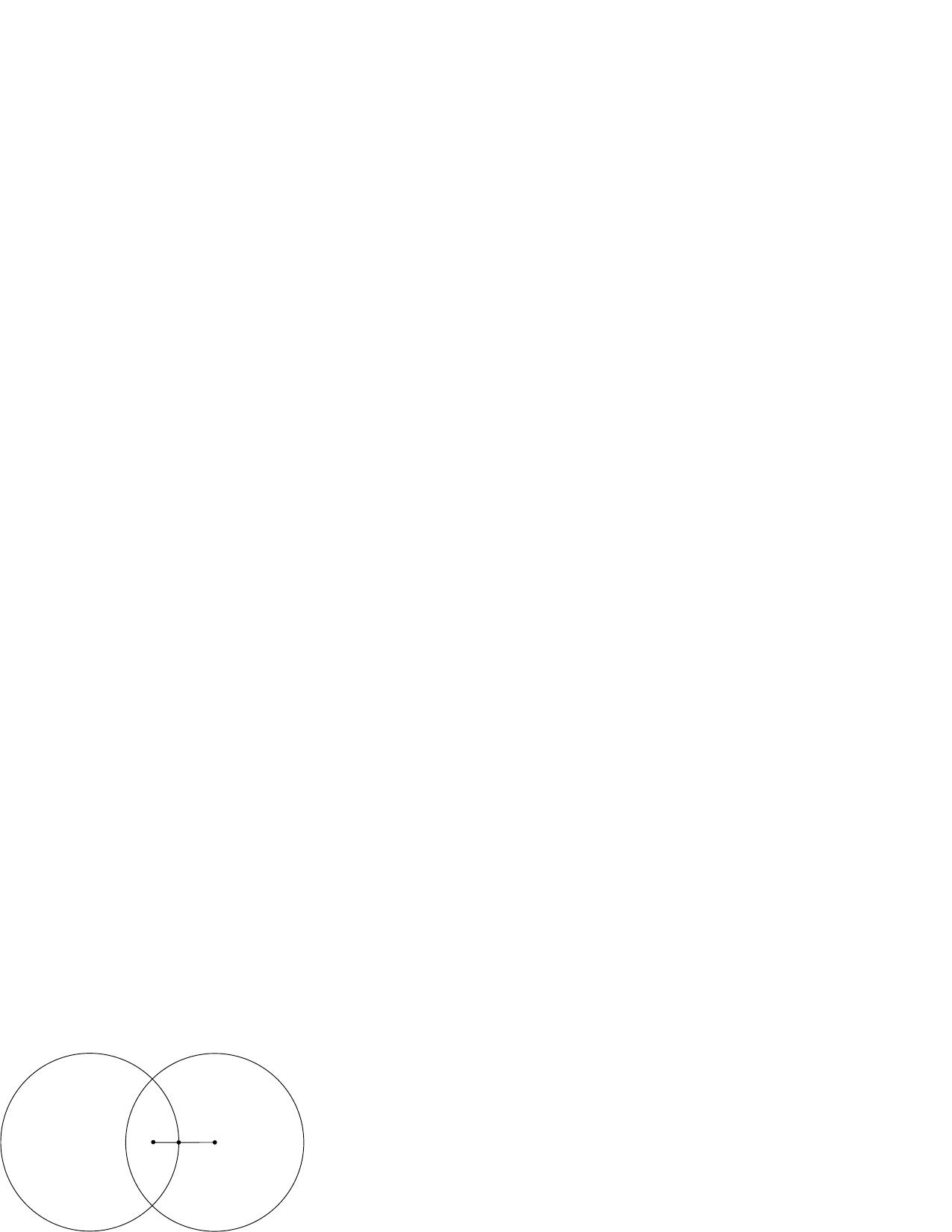}
\end{picture}%
\setlength{\unitlength}{3947sp}%
\begingroup\makeatletter\ifx\SetFigFont\undefined%
\gdef\SetFigFont#1#2#3#4#5{%
  \reset@font\fontsize{#1}{#2pt}%
  \fontfamily{#3}\fontseries{#4}\fontshape{#5}%
  \selectfont}%
\fi\endgroup%
\begin{picture}(3263,1922)(2136,-651)
\put(4324,173){\makebox(0,0)[lb]{\smash{{\SetFigFont{8}{9.6}{\familydefault}{\mddefault}{\updefault}{\color[rgb]{0,0,0}$t=1$}%
}}}}
\put(4088,376){\makebox(0,0)[lb]{\smash{{\SetFigFont{8}{9.6}{\familydefault}{\mddefault}{\updefault}{\color[rgb]{0,0,0}$t_c$}%
}}}}
\put(3646,177){\makebox(0,0)[lb]{\smash{{\SetFigFont{8}{9.6}{\familydefault}{\mddefault}{\updefault}{\color[rgb]{0,0,0}$t=0$}%
}}}}
\end{picture}%

\caption{\label{FIG:FISHER_CIRCLES}
The Fisher circles: $|t\pm 1| = \sqrt{2}$.}
\end{center}
\end{figure}

We can also consider the zeros of $\Zpart_n(z,t)$ as a single object in $\C^2$.
While $\{\Zpart_n(z,t)= 0\}$ is an algebraic curve in $\C^2$, we want to keep
track of the multiplicities to which $\Zpart_n(z,t)$ vanishes along each
irreducible component of this curve.  We will do this using the notion of {\em
divisor}, which is a sum of finitely many irreducible algebraic curves, each
with integer multiplicities (see \cite[Appendix A.3]{BLR1}).  Thus, the way
$Z_n(z,t)$ vanishes in $\C^2$ defines a divisor\footnote{We will see in Remark
\ref{REM:REDUCED_DIVISOR} that each of these multiplicities is one, and hence
there is no harm in thinking in terms of the algebraic curve.} $\SSS^c_n$ on
$\C^2$ which we call the {\em Lee-Yang-Fisher} (LYF) zeros.

In order to study the limiting distribution of the LYF zeros $\SSS^c_n$, as $n$ tends to
infinity, we will use the theory of currents; see \cite{DERHAM,LELONG}.  A
$(1,1)$-current  $\nu$ on $\C^2$ is a linear functional on the space of
$(1,1)$-forms that have compact support (see Appendix~\ref{APPENDIX:CURRENTS}).
A basic example is the current $[X]$ of integration over an irreducible algebraic curve
$X$.  Meanwhile, the current of integration $[D]$ over a divisor $D$ is the weighted sum of 
currents of integration over each of irreducible components, weighted 
according to the multiplicities.
  A plurisubharmonic function $G$ is called a  {\it pluripotential} of
$\nu$ if $\displaystyle{  \frac{i}{\pi} \di\dibar G } =\nu$, in the sense of
distributions.  (Informally, this means that $\displaystyle{ \frac 1 {2\pi}
\De(G|\, L)=\nu|\, L } $ for almost any  complex line $\Line$, so
$G|\, \Line$ is the electrostatic potential of the charge distribution $\nu|\,
\Line$.)

Let $d_n$ be the degree of divisor $\SSS^c_n$.  It is natural to ask whether there exists a
$(1,1)$-current $\LYF$ so that
\begin{eqnarray}\label{EQN:DESIRED_LIMIT_LYF}
\frac{1}{d_n} [\SSS^c_n] \rightarrow \LYF.
\end{eqnarray}
It would describe the limiting distribution of
Lee-Yang-Fisher zeros.  Within almost any complex line $L$, the limiting
distribution of zeros can be obtained as the restriction $\LYF | L$.

In order to justify existence of $\LYF$, one considers
the sequence of ``free energies''
\begin{eqnarray*}
\FoverT_n(z,t) := \log|\check \Zpart_n(z,t)|,
\end{eqnarray*}
\noindent
where $\check \Zpart_n(z,t)$ 
is the polynomial obtained by clearing the denominators of $\Zpart_n$.
We will say that the sequence of graphs $\Gamma_n$ has a {\em global thermodynamic limit} if 
\begin{eqnarray*}
\frac{1}{d_n} \FoverT_n(z,t) \ra \FoverT(z,t)
\end{eqnarray*}
in $L^1_\loc(\C^2)$.  In Proposition \ref{PROP:L1_GLOBAL_LIMIT} we will show
that this is sufficient for the limiting current $\LYF$ to exist and convergence (\ref{EQN:DESIRED_LIMIT_LYF}) to hold. 

The support of $\LYF$ consists of the singularities of the magnetic observables
of the model, thus describing ``global phase transitions'' in $\C^2$.  Connected
components of $\C^2 \sm \supp \LYF$ describe the distinct ``complex
phases'' of the system.



\subsection{Diamond hierarchical model}\label{DHL intro}


The {\it diamond hierarchical lattice} (DHL) is a sequence of graphs
$\Gamma_n$ illustrated on Figure~\ref{FIG:DIAMOND GRAPHS}. 
Part I \cite{BLR1} and much of the present paper are both devoted to study of this lattice.

\begin{figure}
\begin{center}

\begin{picture}(0,0)%
\includegraphics{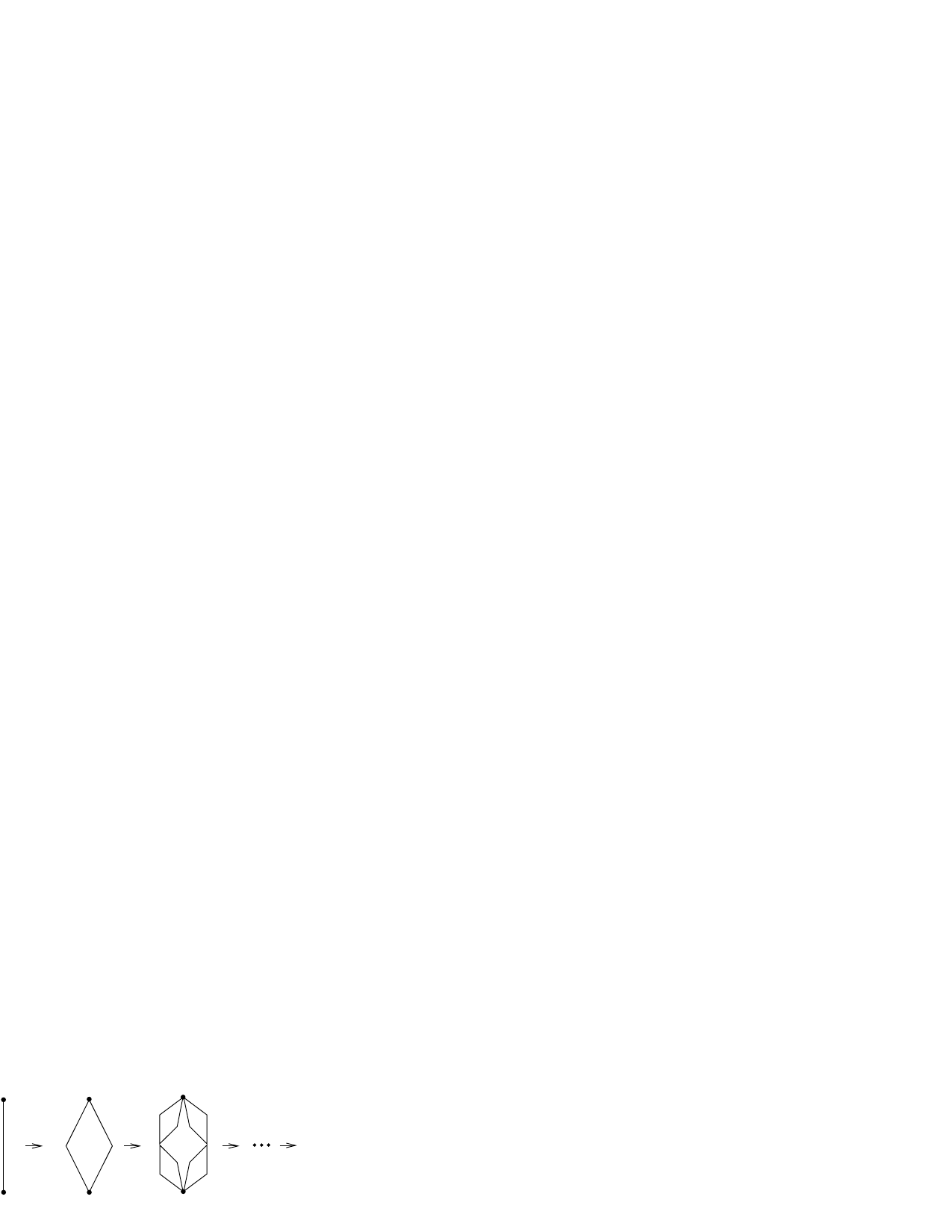}
\end{picture}%
\setlength{\unitlength}{3947sp}%
\begingroup\makeatletter\ifx\SetFigFont\undefined%
\gdef\SetFigFont#1#2#3#4#5{%
  \reset@font\fontsize{#1}{#2pt}%
  \fontfamily{#3}\fontseries{#4}\fontshape{#5}%
  \selectfont}%
\fi\endgroup%
\begin{picture}(3249,1601)(3649,-2964)
\put(4570,-2692){\makebox(0,0)[lb]{\smash{{\SetFigFont{8}{9.6}{\familydefault}{\mddefault}{\updefault}{\color[rgb]{0,0,0}$b$}%
}}}}
\put(5582,-2686){\makebox(0,0)[lb]{\smash{{\SetFigFont{8}{9.6}{\familydefault}{\mddefault}{\updefault}{\color[rgb]{0,0,0}$b$}%
}}}}
\put(3655,-2700){\makebox(0,0)[lb]{\smash{{\SetFigFont{8}{9.6}{\familydefault}{\mddefault}{\updefault}{\color[rgb]{0,0,0}$b$}%
}}}}
\put(4560,-1474){\makebox(0,0)[lb]{\smash{{\SetFigFont{8}{9.6}{\familydefault}{\mddefault}{\updefault}{\color[rgb]{0,0,0}$a$}%
}}}}
\put(3667,-1467){\makebox(0,0)[lb]{\smash{{\SetFigFont{8}{9.6}{\familydefault}{\mddefault}{\updefault}{\color[rgb]{0,0,0}$a$}%
}}}}
\put(5563,-1447){\makebox(0,0)[lb]{\smash{{\SetFigFont{8}{9.6}{\familydefault}{\mddefault}{\updefault}{\color[rgb]{0,0,0}$a$}%
}}}}
\put(6898,-2062){\makebox(0,0)[lb]{\smash{{\SetFigFont{8}{9.6}{\familydefault}{\mddefault}{\updefault}{\color[rgb]{0,0,0}$\Gamma_n$}%
}}}}
\put(3649,-2916){\makebox(0,0)[lb]{\smash{{\SetFigFont{8}{9.6}{\familydefault}{\mddefault}{\updefault}{\color[rgb]{0,0,0}$\Gamma_0$}%
}}}}
\put(5581,-2933){\makebox(0,0)[lb]{\smash{{\SetFigFont{8}{9.6}{\familydefault}{\mddefault}{\updefault}{\color[rgb]{0,0,0}$\Gamma_2$}%
}}}}
\put(4419,-2922){\makebox(0,0)[lb]{\smash{{\SetFigFont{8}{9.6}{\familydefault}{\mddefault}{\updefault}{\color[rgb]{0,0,0}$\Gamma = \Gamma_1$}%
}}}}
\end{picture}%

\end{center}
\caption{\label{FIG:DIAMOND GRAPHS} {Diamond hierarchical lattice.}}
\end{figure}

The Migdal-Kadanoff renorm-group RG equations 
for the DHL have the form:
\begin{equation}\label{R-intro}
    (z_{n+1},t_{n+1}) = \left( \frac{z_n^2+t_n^2}{z_n^{-2}+t_n^2}, \ \frac{z_n^2+z_n^{-2}+2}{z_n^2+z_n^{-2}+t_n^2+t_n^{-2}}\right):=
                       \Rphys(z_n,t_n),
\end{equation}
where $z_n$ and $t_n$ are the renormalized field-like and temperature-like
variables on $\Gamma_n$.  The map $\Rphys$ that relates these quantities is
also called the {\it renormalization transformation}. 

\msk
To study the Fisher zeros, we consider the line 
$\LLINV=\{z=1\}$ in $\C^2$.  This line is invariant under
$\Rphys$, and $\Rphys: \LLINV\ra \LLINV$ reduces to a fairly simple
one-dimensional rational map
\begin{eqnarray*}
        \Rphys:  t\mapsto \left(\frac {2t}{t^2+1}\right)^2.
\end{eqnarray*}
The Fisher zeros at level $n$ are obtained by pulling back the point $t=-1$
under $\Rphys^n$.  As shown in \cite{BL}, the limiting distribution of the
Fisher zeros in this case exists and it coincides with the measure  of maximal
entropy (see \cite{BROLIN,LYUBICH:NOTE,LYUBICH:MAX ENT,FLM}) of $\Rphys|\, \Line$.  The limiting support for this measure
is the Julia set for $\Rphys | \LLINV$, which is shown in Figure~\ref{FIG:INVARIANT_LINE_JULIA}.
It was studied by \cite{DDI,DIL,BL,Ish} and others. 

\begin{figure}[htp]
\begin{center}

\begin{picture}(0,0)%
\includegraphics{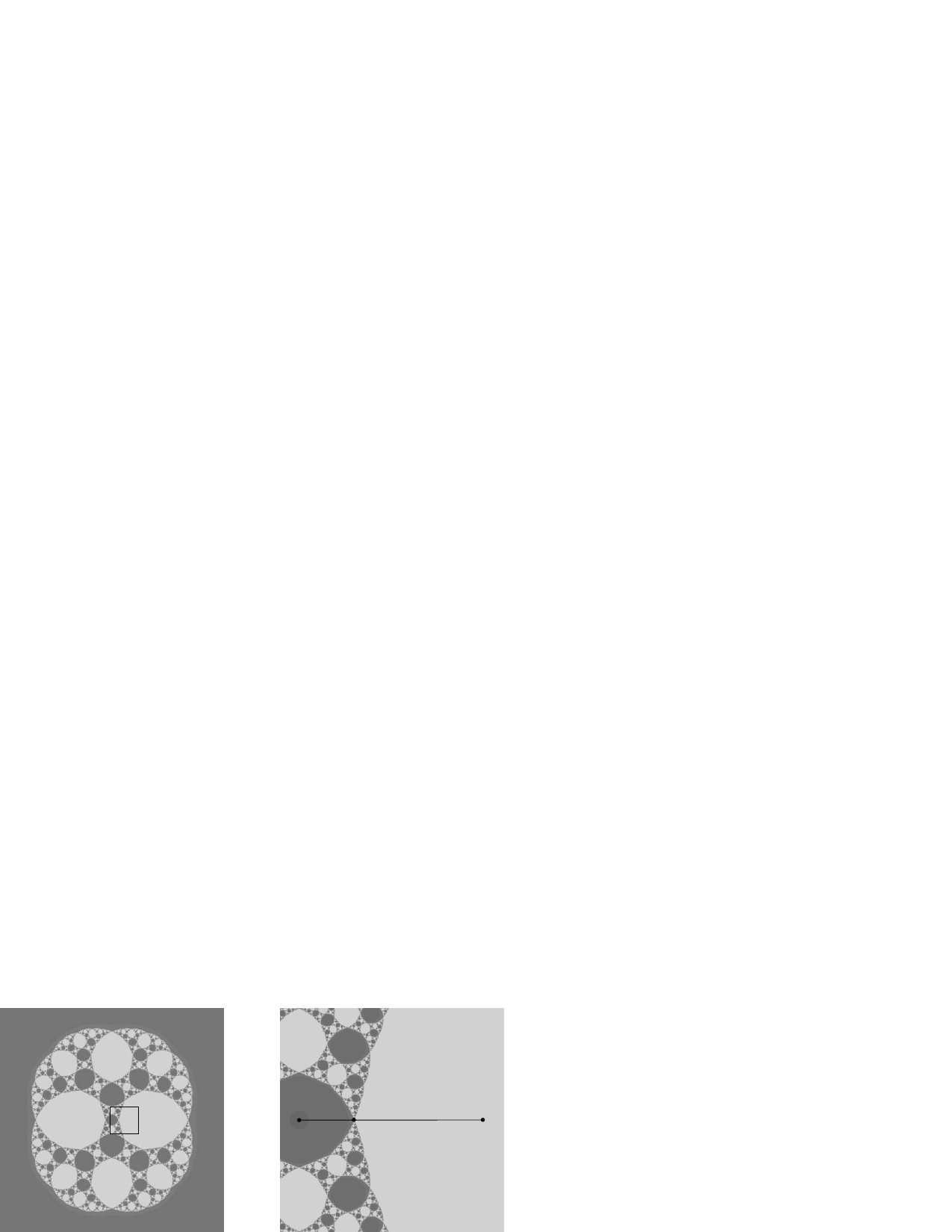}
\end{picture}%
\setlength{\unitlength}{3947sp}%
\begingroup\makeatletter\ifx\SetFigFont\undefined%
\gdef\SetFigFont#1#2#3#4#5{%
  \reset@font\fontsize{#1}{#2pt}%
  \fontfamily{#3}\fontseries{#4}\fontshape{#5}%
  \selectfont}%
\fi\endgroup%
\begin{picture}(5400,2400)(2401,-2761)
\put(7454,-1500){\makebox(0,0)[lb]{\smash{{\SetFigFont{8}{9.6}{\familydefault}{\mddefault}{\updefault}{\color[rgb]{0,0,0}$t=1$}%
}}}}
\put(6275,-1513){\makebox(0,0)[lb]{\smash{{\SetFigFont{8}{9.6}{\familydefault}{\mddefault}{\updefault}{\color[rgb]{0,0,0}$t_c$}%
}}}}
\put(5502,-1489){\makebox(0,0)[lb]{\smash{{\SetFigFont{8}{9.6}{\familydefault}{\mddefault}{\updefault}{\color[rgb]{0,0,0}$t=0$}%
}}}}
\end{picture}%

\caption{\label{FIG:INVARIANT_LINE_JULIA}
On the left is the Julia
set for $\Rphys|\, \LLINV$.  On the right is a zoomed-in view of a
boxed region around the critical point $t_c$.
The invariant interval $[0,1]$ corresponds to the states with
real temperatures $T\in [0,\infty]$ and vanishing field $h=0$.}
\end{center}
\end{figure}

In this paper, we will use the Migdal-Kadanoff RG equations to study the global
limiting distribution of Lee-Yang-Fisher zeros for the DHL in the complex
projective plane $\CP^2$.   (The divisors $\SSS^c_n$ are extended to $\CP^2$ in
the natural way.) The first main result of this paper is:

\begin{thmC}\label{Global Current Thm}
For the DHL, the currents $\frac{1}{2\cdot 4^n}[\SSS^c_n]$ converge distributionally to some $(1,1)$-current $\LYF$ on $\CP^2$ 
whose pluripotential coincides with the free energy $\FoverT$ of the system. 
\end{thmC}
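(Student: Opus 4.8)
The plan is to pass to the homogeneous lift on $\C^{3}$, identify $\hatFoverT$ with the Green function of the Migdal--Kadanoff map, and read off the theorem from the Lelong--Poincar\'e formula together with the pluripotential theory of algebraically stable rational self-maps of $\CP^{2}$. Fix a reduced homogeneous polynomial lift $F\colon\C^{3}\to\C^{3}$ of $\Rphys$ of algebraic degree $4$, and let $\hat\Zpart_{n}$ be the homogenization of $\check\Zpart_{n}$, a homogeneous polynomial of degree $2\cdot 4^{n}$. By Lelong--Poincar\'e, $\tfrac{i}{\pi}\di\dibar\hatFoverT_{n}$ is the lift of $[\SSS^{c}_{n}]$, so $\tfrac{1}{2\cdot 4^{n}}\hatFoverT_{n}$ is a homogeneous pluripotential of $\tfrac{1}{2\cdot 4^{n}}[\SSS^{c}_{n}]$. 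By Proposition~\ref{PROP:L1_GLOBAL_LIMIT} in its $\CP^{2}$ form (cf.\ \S\ref{SUBSEC:THERMO2}), it then suffices to prove that $\tfrac{1}{2\cdot 4^{n}}\hatFoverT_{n}$ converges in $L^{1}_{\loc}(\C^{3})$; the limit will automatically be a pluripotential of a well-defined $(1,1)$-current $\mu^{c}$.

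\emph{Functional equation.} The hypothesis $\deg\SSS^{c}_{n}=2\cdot 4^{n}$ is equivalent to $\Rphys$ being algebraically stable with first dynamical degree $\la_{1}(\Rphys)=4$, so that $F^{n}$ (the $n$-th iterate) is, up to a constant, a reduced lift of $\Rphys^{n}$. Summing out the spins at the ``new'' vertices of $\Gamma_{n+1}$ and using the normalizations of the Remark in \S\ref{DHL intro}, the Migdal--Kadanoff equations~(\ref{R-intro}) yield, after homogenizing and clearing denominators,
\begin{equation*}
  \hat\Zpart_{n}=C_{n}\,(\hat\Zpart_{0}\circ F^{n}),\qquad C_{n}\in\C^{*},
\end{equation*}
where the factor is a nonzero constant rather than a polynomial because $\hat\Zpart_{0}\circ F^{n}$ cuts out $(\Rphys^{n})^{*}\SSS^{c}_{0}=\SSS^{c}_{n}$ and, by hypothesis, already has degree $2\cdot 4^{n}$. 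Hence $\hatFoverT_{n}=\hatFoverT_{0}\circ F^{n}+\log|C_{n}|$; writing $\hatFoverT_{0}=2\log\|\cdot\|+\phi$ with $\phi$ homogeneous of degree $0$ (so $\phi$ descends to $\CP^{2}$, is bounded above, and $\equiv-\infty$ exactly on $\SSS^{c}_{0}$), one gets
\begin{equation*}
  \frac{1}{2\cdot 4^{n}}\hatFoverT_{n}=\frac{1}{4^{n}}\log\|F^{n}\|+\frac{1}{2\cdot 4^{n}}\,\phi\circ F^{n}+\kappa_{n},
\end{equation*}
with $\kappa_{n}:=\tfrac{\log|C_{n}|}{2\cdot 4^{n}}$; an elementary inspection of the Migdal--Kadanoff normalization constants shows $\kappa_{n}$ converges, and in any case $\di\dibar\kappa_{n}=0$.

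\emph{Dynamical input and conclusion.} By the pluripotential theory of algebraically stable rational maps of $\CP^{2}$ (Sibony, Forn\ae ss--Sibony, Diller--Favre, Guedj), the Green function $G:=\lim_{n}\tfrac{1}{4^{n}}\log\|F^{n}\|$ exists in $L^{1}_{\loc}(\C^{3})$, is plurisubharmonic with $G(tx)=\log|t|+G(x)$ and $G\circ F=4G$, and $T:=\tfrac{i}{\pi}\di\dibar G$ descends to the Green current of $\Rphys$ on $\CP^{2}$. I would then verify that $\SSS^{c}_{0}$ is not contained in the exceptional (pluripolar) set of $\Rphys$ — a finite check using the explicit map~(\ref{R-intro}) and the explicit polynomial $\check\Zpart_{0}$, whose zero curve is a smooth conic. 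Granting this, the equidistribution theorem for preimages of generic curves under algebraically stable rational maps (Favre--Jonsson; see also Dinh--Sibony) yields
\begin{equation*}
  \frac{1}{2\cdot 4^{n}}[\SSS^{c}_{n}]=\frac{1}{(\deg\SSS^{c}_{0})\,4^{n}}(\Rphys^{n})^{*}[\SSS^{c}_{0}]\ \longrightarrow\ T,
\end{equation*}
together with $L^{1}_{\loc}$-convergence of the associated potentials, i.e.\ $\tfrac{1}{2\cdot 4^{n}}\,\phi\circ F^{n}\to 0$. Combining with the previous display, $\tfrac{1}{2\cdot 4^{n}}\hatFoverT_{n}\to G+\kappa$ in $L^{1}_{\loc}(\C^{3})$; taking $\hatFoverT$ to be this limit we conclude that $\mu^{c}=\tfrac{i}{\pi}\di\dibar\hatFoverT=T$ exists and has $\hatFoverT$ as pluripotential.

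\textbf{Main obstacle.} The crux is the input on $\Rphys$ used in the last step: establishing algebraic stability — equivalently, that $\deg\SSS^{c}_{n}$ does not drop below $2\cdot 4^{n}$ — and locating the exceptional curves and indeterminacy points of $\Rphys$ precisely enough to confirm that $\SSS^{c}_{0}$ avoids the exceptional set. Both require a detailed study of the geometry of $\Rphys$ on the line at infinity and on its invariant curves (such as $\LLINV$ and the Lee--Yang cylinder), and this is where the explicit form of the Migdal--Kadanoff map is genuinely needed: a rational self-map of $\CP^{2}$ is not algebraically stable in general, and when stability fails one must first pass to a suitable blow-up, so this point cannot be skipped. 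A secondary, essentially bookkeeping issue is the clean functional equation above — that clearing denominators in the spin summation introduces no nontrivial polynomial common factor between $\hat\Zpart_{n+1}$ and $\hat\Zpart_{n}\circ F$ (forced by the degree count once stability is known), and that the normalization constants $\kappa_{n}$ converge.
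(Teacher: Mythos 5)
Your proposal rests on a hypothesis that fails for this map, and the one place you flag as ``the crux'' is precisely where the paper does something quite different from what you propose.

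Your functional equation $\hat\Zpart_n = C_n\,(\hat\Zpart_0\circ F^n)$ requires a reduced lift $F$ of $\Rphys$ of degree $4$ with $\deg(F^n)=4^n$. But $\Rphys$ from (\ref{R}) has algebraic degree $6$, and the paper explicitly records that $\Rphys$ is \emph{not} algebraically stable: $4^n < \deg(\Rphys^n) < 6^n$, so in fact $\SSS^c_n \neq (\Rphys^n)^*\SSS^c_0$. Consequently there is no $F$ with your claimed properties, and the chain ``$\deg\SSS^c_n = 2\cdot 4^n$ $\Leftrightarrow$ $\Rphys$ algebraically stable with $\lambda_1=4$'' is false --- the degree $2\cdot 4^n$ comes from $|\EE_n|=4^n$ and has nothing to do with stability of $\Rphys$. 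The paper's resolution is not to blow up $\Rphys$; it is to discard the physical $(z,t)$-coordinates and work with the map $\Rmig$ in homogeneous $(U:V:W)$-coordinates, given by (\ref{EQN:MK_HOMOG}), which \emph{is} algebraically stable of degree $4$ and is only semi-conjugate to $\Rphys$ by the degree-two map $\correspond$. There the correct functional equation is $\hat\Zpart_n = Y_0\circ\hat\Rmig^n$ with $Y_0=U+2V+W$, and the theorem for $\Rphys$ is then obtained by pulling back through $\correspond$.

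Even granting the passage to $\Rmig$, your last step --- citing Favre--Jonsson / Dinh--Sibony equidistribution for preimages of a generic curve --- does not close the argument. The paper states explicitly that the Global LYF Current Theorem ``does not seem to be a consequence of any available results,'' and instead proves it from scratch: the key technical statement is Theorem \ref{Herm norm}, that $\frac{1}{4^n}\log\|\si_Y\circ\Rmig^n\|\to 0$ in $L^1_\loc(\CP^2)$, which is established via the Borel--Cantelli/Egorov criterion (Lemma \ref{generality}) and tail estimates (Lemma \ref{tales}) coming from delicate volume-transformation bounds under a single pull-back. Those bounds in turn rely on the classification of the critical locus of $\tl\Rmig$: away from the fixed points $e,e'$, the collapsing line $\Ltwo$, and the two points $\pm(i,i)$, everything is a Whitney fold (Lemma \ref{description of folds}), and the exceptional non-fold loci are handled by bespoke estimates (Lemmas \ref{normal form near L2}, \ref{det near ind pt}, \ref{vol transform near L2}, \ref{L2 cap L0}, Corollary \ref{general vol transform}). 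The collapsing line $\Ltwo\to\FIXmig_0$ and the blown-up indeterminacy points $a_\pm$ are exactly the features that put $\Rmig$ outside the standing hypotheses of the off-the-shelf equidistribution theorems you want to invoke; the $L^1_\loc$ convergence of potentials here has to be proved, not imported.
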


It would seem natural to prove this theorem by extending $\Rphys$ as a rational
map $\Rphys:~\CP^2~\ra~\CP^2$ and then considering the normalized pullbacks
$\frac{1}{2\cdot 4^n} (\Rphys^n)^*\SSS^c_0$.  However, an important subtlety
arises because the degrees of $\Rphys$ do not behave properly under iteration: 
$$4^n < \deg(\Rphys^n) < (\deg(\Rphys))^n = 6^n.$$
This {\em algebraic instability\footnote{For the definition,
see \cite[\S 1.4]{S_PANORAME} or \cite[Appendix A.6]{BLR1}.}} of $\Rphys$ has the consequence that
$$\SSS^c_n \neq (\Rphys^n)^*\SSS^c_0.$$
The issue is resolved by working with another rational mapping $\Rmig : \CP^2 \ra \CP^2$ coming
directly from the Migdal-Kadanoff RG Equations, without passing to the
``physical'' $(z,t)$-coordinates.  
This map is semi-conjugate to $\Rphys$ by a degree two
rational map $\correspond:~\CP^2\ra\CP^2$.  Moreover, $\Rmig$ is algebraically
stable, satisfying $\deg(\Rmig^n) = (\deg(\Rmig))^n = 4^n$.  For each $n \geq 0$, we have:
$$\SSS^c_n = \correspond^{-1} (\Rmig^{-n} S^c_0),$$
where $S^c_0$ is an appropriate projective line.  

Note that even though $\Rmig$ is algebraically stable, it is still not
well-defined at two indeterminate points $\INDmig_\pm$
which strongly influence the global dynamics.

\subsection{Equidistribution of curves to the Green Current}\label{SUBSEC:EQUIDIST}
Associated to any (dominant, algebraically stable) rational mapping
$f:\CP^2\ra\CP^2$ is a canonically defined invariant current $\Green$, called
the Green current\footnote{It is common in the literature to denote the Green current by $T$, but we use $\Green$ to avoid
any confusion with the temperature.} of $f$.  It satisfies $f^* \Green$ = $d \cdot \Green$, where
$d = \deg f$.  
Such invariant currents are a powerful tool of higher-dimensional holomorphic
dynamics: see Bedford-Smillie \cite{BS}, Fornaess-Sibony \cite{FS}, Hubbard-Papadapol \cite{HUBBARD_PAPADAPOL}, and others
(see \cite{DS_SURVEY,S_PANORAME} for surveys of this subject). 

Let $A \subset \CP^2$ be an algebraic curve of degree $\deg(A)$.  Since the early 1990's there has been extensive research 
proving that
\begin{eqnarray}\label{EQN:DESIRED_EQUIDISTRIBUTION}
\frac{1}{d^n \deg(A)}(f^n)^* [A] \rightarrow  \Green
\end{eqnarray}
under certain hypotheses on $f$ and $A$.  See
\cite{BS,FS,RUSS_SHIFF,DILLER,FAVRE_GUEDJ,FAVRE_THESIS,FAVRE_JONSSON,GUEDJ_VOL1,GUEDJ_VOL2,DS,DDG,TAFLIN,PROTIN}
for a sample of papers on the subject.  Note also the recent survey \cite{DS3}.

If (\ref{EQN:DESIRED_EQUIDISTRIBUTION}) holds for $f = R$ (the Migdal-Kadanoff
RG mapping for the DHL) and $A = S^c_0$ (the principal LYF zeros),
then we obtain the Global Lee-Yang-Fisher Theorem by pulling everything back
under $\correspond$.  In this way, the classical Lee-Yang-Fisher theory gets
linked to the contemporary Dynamical Pluripotential Theory.

However, the majority of the papers studying
(\ref{EQN:DESIRED_EQUIDISTRIBUTION}) focus on the case that either
\begin{itemize}
\item  [1)]  
 $f$ is birational ($f$ has a rational ``inverse''), or
\item  [2)]        
$f$ is a holomorphic endomorphism (no indeterminate points), 
\end{itemize}
in order to obtain the sharpest possible results.  Otherwise, they either
assume $A$ is generic, or they work with a more ``diffused'' current in place
of $A$.  In any case, because $I(R) \neq \emptyset$ and $d_{\rm top}(R) > 1$, there does not seem to be an existing result that 
applies to our setting:

\begin{thmD}
Convergence {\rm (\ref{EQN:DESIRED_EQUIDISTRIBUTION})} holds for the Migdal-Kadanoff
Renormalization mapping $R: \CP^2 \rightarrow \CP^2$ and any algebraic curve $A
\subset \CP^2$.
\end{thmD}

\begin{rem}
The dynamical approach to studying the limiting distribution of Lee-Yang-Fisher 
zeros for hierarchical lattices has independently been considered in
\cite{DeSMa} and studied numerically in \cite{DeS}.
\end{rem}

The strategy  of 
the proof of the Equistribution Theorem for the DHL can be
adapted to prove a more general Equidistribution Theorem, also suitable for
rational maps 
whose indeterminacy locus satisfies certain properties.

Let $f: \CP^2 \rightarrow \CP^2$ be a dominant algebraically stable rational
mapping of algebraic degree $d$.  Denote the indeterminacy set of $f$ by
$I(f)$.  For any $Y \subset \CP^2$ we define $f(Y)$ and $f^{-1}(Y)$ using a
resolution of $I(f)$; see Appendix \ref{APPENDIX:STRUCTURE_RAT_MAP}.

We say that an algebraic curve $A$ is {\em backward invariant}\footnote{Note that such a curve is also
forward invariant unless it contains an indeterminate point that blows-up to a different curve.} if $f^{-1}(A) =
A$ and we say that $A$ is {\em collapsed by $f$} if $f(A \setminus I(f))$
is a single point.   Let 
\begin{align*}
\BISET:= \{f(A \setminus I(f))\, :\, \mbox{$A$ is a collapsed curve of $f$}\}.
\end{align*}
Since each collapsed curve is critical for $f$, $\BISET$ is finite.

Choose a volume form on  $\CP^2$ normalized so that $\vol(\CP^2)=1$.  For any
$z \in \CP^2$ we define the {\em volume
exponent}\label{DEF:VOL_EXP} $\sigma(z,f)$ to be the smallest positive number so that for any
$\gamma > \sigma(z,f)$ there is a constant $K > 0$ and a neighborhood $N$ of
$z$ such that for any measurable set $Y \subset \CP^2$ we have
\begin{align}\label{EQN:DEF_SIGMA}
\vol(f^{-1}(Y) \cap N ) \leq K \left(\vol Y\right)^{1/\gamma}.
\end{align}
In \S \ref{SEC:VOL_EST_ONE_ITERATE} we will give two estimates on $\sigma(z,f)$
in terms of how the 
 complex Jacobian $\Jac f:=\det Df$ vanishes at $z$.  One of them is simply in terms of the order of vanishing $\mu(z,f)$ of $\Jac f$ at $z$,
while the second is stronger, but requires a more detailed assumption on $\Jac f$ near $z$.

A sequence of points $\{z_n\} \subset \CP^2$ is an {\em orbit} of $f$
if $z_{n+1} \in f(\{z_n\})$ for each $n \geq 0$.  (If $z_n \in I(f)$, $z_{n+1}$
can be any point on the algebraic curve $f(\{z_n\})$.) If $z_n \not \in I(f)$
for every $n$, we will refer to the orbit as a {\em regular orbit}.  Otherwise,
we will refer to it as an {\em indeterminate orbit}.

If $f^n$ is holomorphic in a neighborhood of $z \in \CP^2$ let $c(z,f^n)$
denote \label{DEF:C} the order of vanishing of the power series expansion for $f^n$ expressed
in local coordinates centered at $z$ and $f^{n}(z)$, respectively.
If $z_0,\ldots,z_{k-1}$ is a regular periodic orbit of period $k$ for~$f$ then
\begin{align*}
c_\infty(z_0,f) := \lim_{n \rightarrow \infty} c(z_0,f^{nk})^{1/nk}
\end{align*}
exists and satisfies $c_\infty(z_0,f) \leq d$; see \S
\ref{SUBSEC:SUPERATTRACTING}.  We say that a regular periodic point $z_0$ is {\em
superattracting} if $c_\infty(z_0,f) > 1$ and is {\em maximally
superattracting}\label{DEF:MAX_SUPER_ATTR} if $c_\infty(z_0,f) = d$.  In these
cases, the orbit of $z_0$ is attracting at superexponential rate in {\em
all directions}.

Let $\mathcal{E}$ be the finite set containing all
\label{DEF_EXCEPTIONAL_SET}
\begin{itemize}
\item[(a)] maximally superattracting periodic points, and
\item[(b)] superattracting periodic points $z_0$ of period $k$ for which there is an algebraic curve $C$ 
that is backward invariant under $f^k$, collapsed to $z_0$ under some iterate of $f^k$,  and for which
$z_0$ is a singular point of $C$.
\end{itemize}
Denote the respective subsets of $\mathcal{E}$ where (a) or (b) holds as $\mathcal{E}(a)$ and $\mathcal{E}(b)$.
We will call $\mathcal{E}$ the {\em exceptional set} for $f$.

\begin{thmE}\label{THM:EQUIDISTRIBUTION}
Let $f: \CP^2 \rightarrow \CP^2$ be a dominant algebraically stable
rational map of degree $d \geq 2$  and let $\Green$ denote the Green Current of $f$.  Assume that
\begin{itemize}
\item[(i)]  $I(f) \neq \emptyset$, 
\item[(ii)] $\sigma(z,f) < d$ for every $z \in I(f)$, and
\item[(iii)] no periodic orbit passes through both the finite set
  $\VERYBADSET:= \{z \in \CP^2 \, : \, \sigma(z,f) > d\}$ and 
$I(f) \cup \BISET$.
\end{itemize}
Then for any algebraic curve $A$ that does not pass through the exceptional set $\mathcal{E}$ we have
\begin{eqnarray*}
\frac{1}{d^n \deg A}(f^n)^* [A] \rightarrow  \Green.
\end{eqnarray*}
\end{thmE}

Note that Hypotheses {\rm (}i{\rm )} and {\rm (}ii{\rm )} are 
verifiable algebraic conditions on the map $f$  itself.  
The last Hypothesis {\rm (}iii{\rm )}  is more problematic as it is dynamical;
still it is amenable to verification under favorable circumstances  as
it  requires that  a certain finite set of points (specified algebraically)  is aperiodic.    
We illustrate application of the Equidistribution Theorem to a few examples in~\S\ref{SEC:APPLICATIONS}.

\msk
Let us also compare\footnote{We will specialize the following two results to
the case of pulling back a curve, instead of pulling back an arbitrary closed positive
$(1,1)$ current.} our result to the cases of birational maps and endomorphisms:

\begin{thmBIRAT}
Let $f:\CP^2~\rightarrow~\CP^2$ be an algebraically stable  birational mapping of degree $d \geq 2$
and let $\Green$ denote the Green Current of $f$.  Let $A \subset \mathbb{CP}^2$ be an algebraic curve.
Then, there is a exceptional set $\mathcal{E}$ consisting of at most one point such that
\begin{eqnarray*}
\frac{1}{d^n \deg A}(f^n)^* [A] \rightarrow  \Green \qquad \mbox{if and only if} \qquad  \mbox{$A$ does not pass through $\mathcal{E}$.}
\end{eqnarray*}
\end{thmBIRAT}

The exceptional set $\mathcal{E}$ consists of a maximally superattracting fixed
point through which there passes a backward invariant curve.  In the special
case that $f$ is a H\'enon mapping,
\begin{align*}
f [x:y:z] = [x^2+ayz:xz:z^2],
\end{align*}
we have that $\mathcal{E} = [1:0:0]$ is the superattracting fixed point at infinity, with the totally invariant curve
corresponding to the line at infinity $\{z=0\}$ \cite{BS,FS2}.

\begin{thmENDOS}
Let $f: \CP^2 \rightarrow \CP^2$ be a holomorphic endomorphism of
degree $d \geq 2$ and let $\Green$ denote the Green Current of $f$.  
Then there is a totally invariant algebraic 
set $\mathcal{E}_1$ consisting of at most three projective complex lines and a finite totally invariant set $\mathcal{E}_2$ with the following property:
If $A$ is an algebraic curve such that
\begin{itemize}
\item[(i)] $A \not \subset \mathcal{E}_1$, and
\item[(ii)] $A \cap \mathcal{E}_2 = \emptyset$,
\end{itemize}
then 
\begin{eqnarray*}
\frac{1}{d^n \deg A}(f^n)^* [A] \rightarrow  \Green.
\end{eqnarray*}
\end{thmENDOS}
\noindent

The exceptional set $\mathcal{E}_1$ corresponds to curves on which the order of
vanishing of the Jacobian grows at rate $\geq d^n$
under iteration, 
and hence the volume exponent $\sigma(z,f^n)$ growing at
rate $\geq d^n$ as well.  Meanwhile the set $\mathcal{E}_2$ consists of maximally
superattracting periodic points.

Our general strategy is similar to that in the above mentioned works:
We prove the $L^1_\loc$-convergence of the potentials of the currents
under consideration, which requires estimates on the volume growth
under the iterated  pullbacks.   
(For the latter, we have especially profited from  the techniques developed  by Favre and Jonsson
\cite{FAVRE_JONSSON}).  
However,  in our setting there is a 
possibility that the orbit of a point $z$ recurs to $I(f)$, while also having
bad growth of the volume exponent. 
In the case of birational maps, this is eliminated since the only critical
points are on collapsed curves, whose orbits stay away from $I(f)$ 
(by the algebraic stability assumption).  In our case,
Hypotheses (i)--(iii) allow us to rule out the problematic scenario.
%

The final punch line of our argument is 
an application of the Borel-Cantelli  Lemma,
which makes it quite  elementary and general. 


\subsection{Other hierarchical lattices}

The Diamond Hierarchical Lattice has the merit of being one of the simplest
non-trivial hierarchical lattices.   Instead of using the diamond to generate
our sequence of graphs $\{\Gamma_n\}_{n=0}^\infty$ (as shown in Figure
\ref{FIG:DIAMOND GRAPHS}) we can use any finite graph $\Gamma$ with two marked
vertices $a$ and $b$ that is symmetric under interchange of $a$ and $b$.  One
obtains $\Gamma_{n+1}$ by replacing each edge of $\Gamma_n$ with a copy of
$\Gamma$, using the marked vertices $a$ and $b$ as ``endpoints''.  We will call
the sequence of graphs {\em the hierarchical lattice generated by $\Gamma$}.

\begin{figure}
\scalebox{1.1}{

\begin{picture}(0,0)%
\includegraphics{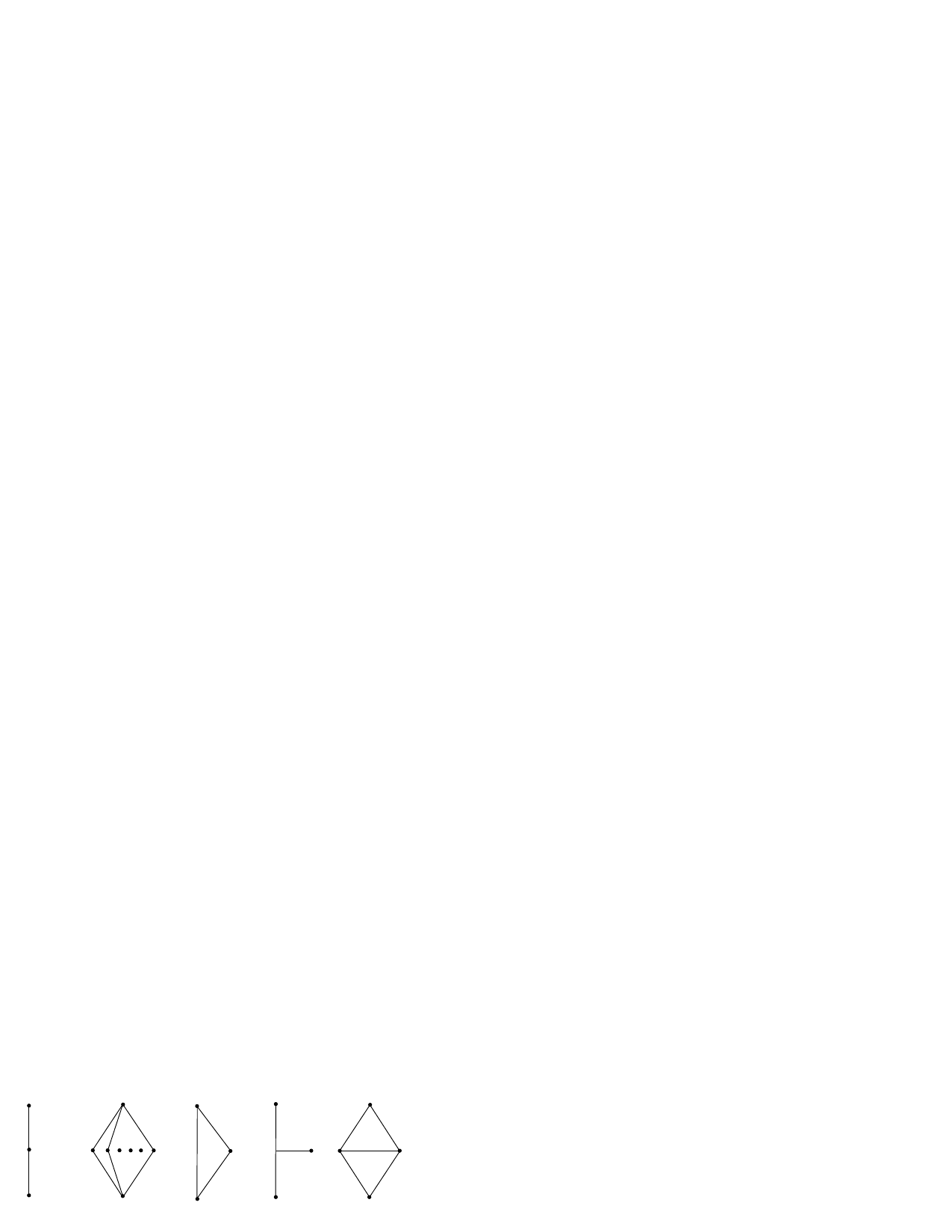}%
\end{picture}%
\setlength{\unitlength}{3947sp}%
\begingroup\makeatletter\ifx\SetFigFont\undefined%
\gdef\SetFigFont#1#2#3#4#5{%
  \reset@font\fontsize{#1}{#2pt}%
  \fontfamily{#3}\fontseries{#4}\fontshape{#5}%
  \selectfont}%
\fi\endgroup%
\begin{picture}(4365,1519)(4188,-2921)
\put(5527,-2204){\makebox(0,0)[lb]{\smash{{\SetFigFont{8}{9.6}{\familydefault}{\mddefault}{\updefault}{\color[rgb]{0,0,0}$k$}%
}}}}
\put(4459,-1501){\makebox(0,0)[lb]{\smash{{\SetFigFont{8}{9.6}{\familydefault}{\mddefault}{\updefault}{\color[rgb]{0,0,0}$a$}%
}}}}
\put(4472,-2671){\makebox(0,0)[lb]{\smash{{\SetFigFont{8}{9.6}{\familydefault}{\mddefault}{\updefault}{\color[rgb]{0,0,0}$b$}%
}}}}
\put(4203,-2866){\makebox(0,0)[lb]{\smash{{\SetFigFont{8}{9.6}{\familydefault}{\mddefault}{\updefault}{\color[rgb]{0,0,0}Linear Chain}%
}}}}
\put(7015,-2842){\makebox(0,0)[lb]{\smash{{\SetFigFont{8}{9.6}{\familydefault}{\mddefault}{\updefault}{\color[rgb]{0,0,0}Tripod}%
}}}}
\put(7093,-1505){\makebox(0,0)[lb]{\smash{{\SetFigFont{8}{9.6}{\familydefault}{\mddefault}{\updefault}{\color[rgb]{0,0,0}$a$}%
}}}}
\put(7104,-2701){\makebox(0,0)[lb]{\smash{{\SetFigFont{8}{9.6}{\familydefault}{\mddefault}{\updefault}{\color[rgb]{0,0,0}$b$}%
}}}}
\put(7800,-2829){\makebox(0,0)[lb]{\smash{{\SetFigFont{8}{9.6}{\familydefault}{\mddefault}{\updefault}{\color[rgb]{0,0,0}Split Diamond}%
}}}}
\put(8124,-1513){\makebox(0,0)[lb]{\smash{{\SetFigFont{8}{9.6}{\familydefault}{\mddefault}{\updefault}{\color[rgb]{0,0,0}$a$}%
}}}}
\put(8114,-2695){\makebox(0,0)[lb]{\smash{{\SetFigFont{8}{9.6}{\familydefault}{\mddefault}{\updefault}{\color[rgb]{0,0,0}$b$}%
}}}}
\put(5143,-2875){\makebox(0,0)[lb]{\smash{{\SetFigFont{8}{9.6}{\familydefault}{\mddefault}{\updefault}{\color[rgb]{0,0,0}$k$-fold DHL}%
}}}}
\put(6094,-2865){\makebox(0,0)[lb]{\smash{{\SetFigFont{8}{9.6}{\familydefault}{\mddefault}{\updefault}{\color[rgb]{0,0,0}Triangle}%
}}}}
\put(5477,-1508){\makebox(0,0)[lb]{\smash{{\SetFigFont{8}{9.6}{\familydefault}{\mddefault}{\updefault}{\color[rgb]{0,0,0}$a$}%
}}}}
\put(6251,-1518){\makebox(0,0)[lb]{\smash{{\SetFigFont{8}{9.6}{\familydefault}{\mddefault}{\updefault}{\color[rgb]{0,0,0}$a$}%
}}}}
\put(5462,-2689){\makebox(0,0)[lb]{\smash{{\SetFigFont{8}{9.6}{\familydefault}{\mddefault}{\updefault}{\color[rgb]{0,0,0}$b$}%
}}}}
\put(6269,-2714){\makebox(0,0)[lb]{\smash{{\SetFigFont{8}{9.6}{\familydefault}{\mddefault}{\updefault}{\color[rgb]{0,0,0}$b$}%
}}}}
\end{picture}%

}
\caption{\label{FIG:GEN_GRAPHS} Generating graphs for some other hierarchical lattices.}
\end{figure}

Associated to each generating graph $\Gamma$ is a Migdal-Kadanoff
renormalization mapping, which is a rational map $R_\Gamma: \CP^2 \rightarrow
\CP^2$.   In \S \ref{SEC:APPLICATIONS} we discuss the Migdal Kadanoff renormalization mappings
associated to the five different hierarchical lattices whose generating graphs are shown in Figure~\ref{FIG:GEN_GRAPHS}.
We will see that the Equidistribution
Theorem applies to the Migdal Kadanoff renormalization mappings associated to
the $k$-fold DHL ($k \geq 2$), the Triangle, and the Split Diamond thus proving the Global LYF Theorem for each of those lattices.
We find that the Equidistribution Theorem does not apply to the Migdal Kadanoff renormalization mappings for the Linear Chain or the Tripod, however
an easy argument directly shows that the Global LYF Theorem holds for the Linear Chain.  We do not know if it holds for the Tripod.

\begin{problem}\label{PROB_GENERAL_HL}
Does the Global LYF Theorem hold for every hierarchical lattice?
\end{problem}

\noindent
Several additional open problems are listed in Appendix \ref{APP:PROBLEMS}.

\subsection{Structure of  the paper}
We begin in \S \ref{SEC:MODEL} by recalling the definitions of free energy and
the classical notion of thermodynamic limit for the Ising model.  We then
discuss the notion of global thermodynamic limit, which is sufficient in order
to guarantee that some lattice have a $(1,1)$-current $\LYF$ describing its
limiting distribution of LYF zeros in $\C^2$.  
We also give an alternative interpretation of the
partition function as a section of (an appropriate tensor power of) the hyperplane 
bundle over $\CP^2$ that will be central to the proof of the Global LYF Current Theorem.
We conclude \S \ref{SEC:MODEL} by summarizing material on the Migdal-Kadanoff
RG equations, including the details for reducing the proof of the Global LYF
Theorem to proof of (\ref{EQN:DESIRED_EQUIDISTRIBUTION}) for $A = \SSS^c_n$ and
$f = R$.

In \S \ref{SEC:GLOBAL STRUCTURE} we summarize the global features of the
mappings $\Rphys$ and $\Rmig$ on the complex projective space $\CP^2$ that were
studied in \cite{BLR1}, including their critical and indeterminacy loci,
superattracting fixed points and their separatrices.

In the next section, \S \ref{SUBSEC:JULIA_AND_FATOU}, we define the Fatou and
Julia sets for $\Rphys$ and show that the Julia set coincides with the closure
of preimages of the invariant complex line $\{z=1\}$ (corresponding to the
vanishing external field).  It is based on M. Green's criteria for Kobayashi
hyperbolicity of the complements of several algebraic curves in  $\CP^2$
\cite{GREEN:PAMS,GREEN:AJM} that generalize the classical Montel Theorem.  We
then use this result to prove that points in the interior of the solid cylinder
$\D \times I$ are attracted to a superattracting fixed point $\eta=(0,1)$ of
$\Rphys$.  

The proofs of the equidistribution theorems require estimates on the volume of
a tubular neighborhood of an algebraic curve and estimates on how volume is
transformed under a rational map.  These estimates are presented in \S
\ref{SEC:VOL_EST_ONE_ITERATE}.

\S \ref{SEC:DHL_EQUIDISTRIBUTION} is devoted to proving the
Equidistribution Theorem for the DHL.  In \S \ref{SEC:EQUIDISTRIBUTION} we then show how to adapt its proof
to prove the Equidistribution Theorem.
In \S \ref{SEC:APPLICATIONS} we discuss applications of the Equidistribution Theorem to other hierarchical lattices.

Like Part I, this paper is written for readers from both complex dynamics and
statistical physics, so we provide background material in two appendices.  To
minimize overlap, we will refer the reader to appendices of Part I when
possible.  In Appendix \ref{APP:COMPLEX GEOM} we collect needed 
background in complex geometry (line bundles over $\mathbb{CP}^2$, currents and their pluri-potentials, Kobayashi
hyperbolicity, and normal families).  In Appendix \ref{APP:COMPLEX_DYNAMICS} we
provide information on the Green current.  In Appendix~\ref{APP:PROBLEMS} we
collect several open problems.

\subsection{Basic notation and terminology}
$\C^*=\C\sm \{0\}$, $\T=\{|z|=1\}$, $\D_r=\{|z|<r\}$, $\D\equiv \D_1$, and
 $\N=\{0,1,2\dots\}$. 
Given two variables $x$ and $y$, 
$x\asymp y$ means that $c \leq |x/y|\leq C$ for some constants $C>c>0$.  \\

\msk \noindent{\bf Acknowledgments:} \\
\noindent
We thank Jeffrey Diller, Mattias Jonsson, Han Peters, Robert Shrock, and Dror
Varolin for interesting discussions and comments.  We also thank the referee
for stimulating comments that motivated us to bring the original version of the
results (see Stony Brook Preprint ims11-3 from 2011) to a more general form.
The work of the first author has been supported in part by the NSF grants
DMS-0652005,  DMS-0969254, DMS-1265172, and DMS-1565602. The work of the second
author has been supported in part  by NSF, NSERC and CRC funds.  The work of
the third author has been supported in part by NSF grants DMS-1102597 and
DMS-1348589,  and by startup funds from the Department of Mathematics at IUPUI.


\section{Description of the model}
\label{SEC:MODEL}

\subsection{Free Energy and Thermodynamic Limit}\label{SUBSEC:THERMO}

The  partition function 
of the Ising model on a graph $\Gamma$ 
is a symmetric Laurent polynomial in $(z,t)$ of the form
\begin{equation}\label{symmetric Laurent}
   \Zpart_\Gamma = \sum_{n=0}^d a_n(t) (z^n+z^{-n})  
\end{equation}
of degree $d$ equal to the the number of edges in $\Gamma$.  (See \cite[Section 2.1]{BLR1} for the definition.)

\begin{rem}\label{REM:REDUCED_DIVISOR} 
Setting  $\check \Zpart(z,t): = z^{d} t^{d/2} \Zpart(z,t)$ clears the denominators of $ \Zpart(z,t)$
and results in a polynomial in $z$ and $t$ of degree $2d$
whose divisor of zeros is the same as that of $\Zpart(z,t)$.  It follows from the definition
of $\Zpart(z,t)$ that $\check \Zpart(z,0) = z^{2d}+1$, each of whose zeros is simple.  In particular
the divisor of zeros assigns multiplicity one to each irreducible component of $\{\Zpart(z,t) = 0\}$.
\end{rem} 

The {\it  free energy} of the system is defined as  
\begin{equation}
      F_\Gamma =   - T\log |\Zpart_\Gamma|,
\end{equation} 
where $T$ is the temperature (related to the temperature-like variable
by  $t= e^{-J/T}$, where $J$  is the coupling constant of the model). 

 
It will be more convenient  to consider the following variant of the free energy:
\begin{eqnarray*}
\FoverT_\Gamma(z,t) :=  -\frac{1}{T} F_\Gamma(z,t) + d(\log|z|+\frac{1}{2} \log|t|) = \log|\check \Zpart(z,t)|.
\end{eqnarray*}
The advantage of using $\FoverT_\Gamma$, instead of
$F_\Gamma$, is that it extends as a plurisubharmonic function on all of $\C^2$.
We will also refer to $\FoverT_\Gamma$ as the ``free energy''.

 
Assume that we have a lattice  given by a hierarchy  of graphs
$\Gamma_n$ with $d_n\to \infty$ edges.
Let us consider its partition functions $\Zpart_n$
and free energies $\FoverT_n$.   
To pass to the thermodynamic limit we normalize the free energy  {\it per bond}.
One says that the hierarchy of graphs has a (pointwise)   {\it thermodynamic limit} if
\begin{equation}\label{L1 assumption}
\frac{1} { 2 d_n} \FoverT_n(z ,t)\to \FoverT(z,t) \quad \mbox {for any} \ z\in \R_+, \ t\in (0,1).
\end{equation}
In this case, the function $\FoverT$ is called  the (modified)  {\it free energy} of the lattice.
For many\footnote{Note that the DHL is not in this class---instead, dynamical techniques are used to justify its classical thermodynamic
limit.} lattices (e.g. $\Z^d$), existence of the thermodynamic limit can be
justified by van Hove's Theorem \cite{VANHOVE,Ruelle_book}.  If the classical thermodynamic limit
exists, then one can justify existence of the limiting distribution of Lee-Yang
zeros and relate it to the limiting free energy; see \cite[Prop. 2.2]{BLR1}.

 In order to prove existence of
a limiting distribution for the Fisher zeros, one needs to prove
existence of the thermodynamic limit in the $L^1_\loc(\C)$-sense:
 \begin{eqnarray}\label{EQN:C1_L1_LIMIT}
 \frac {1} {2 d_n} \FoverT_n (1,t) \ra \FoverT(1,t) \quad \mbox {in} \,\, L^1_\loc(\C).
 \end{eqnarray}

\noindent
For the $\Z^2$ lattice this is achieved by the Onsager solution, which provides an
explicit formula for the limiting free energy;  see, for example, \cite{Baxter}.
Similar techniques apply to the triangular, hexagonal, and
various homopolygonal lattices (see \cite{Matveev_Shrock1,Matveev_Shrock2} for
suitable references and an investigation of the distribution of Fisher zeros 
for these lattices).  For various hierarchical lattices, (\ref{EQN:C1_L1_LIMIT}) can be proved by
dynamical means.  

The situation is similar for the Lee-Yang-Fisher zeros:

\begin{prop}\label{PROP:L1_GLOBAL_LIMIT}
Let $ (\Gamma_n) $ be a lattice for which the thermodynamic limit
exists in the $L^1_\loc(\C^2) $-sense:   
 \begin{eqnarray}\label{EQN:C2_L1_LIMIT}
 \frac{1}{2 d_n}\FoverT_n(z,t) \ra \FoverT(z,t) \quad \mbox {in} \,\, L^1_\loc(\C^2).
 \end{eqnarray}
Then there is a closed positive $(1,1)$-current $\LYF$ on $\C^2$ describing
the limiting distribution of Lee-Yang-Fisher zeros.  Its pluripotential coincides with
the free energy $\FoverT(z,t)$.
\end{prop}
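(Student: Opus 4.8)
The plan is to apply the standard pluripotential-theoretic dictionary, using that the ``free energy'' of Remark~\ref{REM:PSH_FREE_ENERGY} is literally the logarithm of the modulus of a polynomial. First, for each $n$ the polynomial $\check\Zpart_n(z,t)$ cuts out the Lee--Yang--Fisher curve $\SSS^c_n\subset\C^2$, counted with multiplicities, so $\FoverT_n=\log|\check\Zpart_n|$ is plurisubharmonic on $\C^2$ and the Lelong--Poincar\'e formula gives $\tfrac{i}{\pi}\di\dibar\FoverT_n=[\SSS^c_n]$. Normalizing, put $u_n:=\tfrac{1}{2|\EE_n|}\FoverT_n$, which is plurisubharmonic, and $\nu_n:=\tfrac{1}{2|\EE_n|}[\SSS^c_n]=\tfrac{i}{\pi}\di\dibar u_n$; thus $\nu_n$ is a closed positive $(1,1)$-current --- precisely the normalized distribution of LYF zeros at level $n$.

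Next I would pass to the limit. By hypothesis \eqref{EQN:C2_L1_LIMIT}, $u_n\to\FoverT$ in $L^1_\loc(\C^2)$; in particular $\FoverT$ is finite almost everywhere, hence not identically $-\infty$ on any component. A standard fact about plurisubharmonic functions (see Appendix~\ref{APPENDIX:CURRENTS}) then gives that $\FoverT$ agrees a.e.\ with a unique plurisubharmonic function, which we keep denoting $\FoverT$, and, since $\di\dibar$ is continuous for the distributional topology, that $\nu_n\to\mu^c:=\tfrac{i}{\pi}\di\dibar\FoverT$ weakly. As $\tfrac{i}{\pi}\di\dibar$ of a plurisubharmonic function, $\mu^c$ is automatically a closed positive $(1,1)$-current, and by the definition of pluripotential, $\FoverT$ is a pluripotential of $\mu^c$. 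Since we work in $\C^2$ and not on a compact space, the argument is purely local and there is no issue of mass escaping to infinity.

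It remains to see that $\mu^c$ genuinely ``describes the limiting distribution'' of LYF zeros along complex lines. Fixing any pencil of complex lines in $\C^2$, Fubini applied to the $L^1_\loc$-convergence of the $u_n$ shows that for Lebesgue-a.e.\ line $L$ of the pencil one has $u_n|L\to\FoverT|L$ in $L^1_\loc(L)$. Combining the one-variable identity $\tfrac{1}{2\pi}\Delta\log|p|=\sum_{p(\zeta)=0}\delta_\zeta$ with the compatibility of slicing and the Lelong--Poincar\'e formula, the measure $\nu_n|L$ --- the normalized counting measure of the LYF zeros lying on $L$ --- converges weakly to $\mu^c|L=\tfrac{i}{\pi}\di\dibar(\FoverT|L)$. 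Hence $\mu^c$ records, line by line, the asymptotic distribution of the zeros of $\check\Zpart_n$.

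The step I expect to be the real, if modest, obstacle is the passage to the limit: one must use not merely compactness (convergence of a subsequence of the $u_n$) but the full $L^1_\loc$-convergence to a plurisubharmonic limit provided by \eqref{EQN:C2_L1_LIMIT}, which is exactly what forces $\nu_n\to\mu^c$ with no loss of mass. A secondary point needing care is the slicing argument, namely fixing the exceptional null set of lines once and for all and identifying $\nu_n|L$ with the honest zero-counting measure of $\check\Zpart_n|L$.
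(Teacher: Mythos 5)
Your proposal is correct and follows essentially the same route as the paper: apply the Poincar\'e--Lelong formula to write $[\SSS^c_n]=\De_p\FoverT_n$, normalize, and invoke the distributional continuity of $\De_p$ under $L^1_\loc$-convergence. The extra material you add (identifying the limit with a genuine plurisubharmonic representative so that $\mu^c$ is closed and positive, and the slicing argument giving line-by-line convergence of the zero-counting measures) is a useful elaboration of points the paper leaves implicit, but it does not constitute a different approach.
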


For the DHL, we will prove existence of the limit (\ref{EQN:C2_L1_LIMIT}) in
the Global LYF Current Theorem.

\begin{rem}  It is an open question whether the limit {\rm (\ref{EQN:C2_L1_LIMIT})} exists for any classical lattice, including the $\mathbb{Z}^2$ lattice.
Moreover, it seems to also be an open question whether the limit {\rm (\ref{EQN:C1_L1_LIMIT})} exists for the $\Z^d$ lattice, when $d \geq 3$, and other classical three dimensional lattices.  See Problem
\ref{PROP:GLOBAL_LIMIT}.
\end{rem}

\begin{proof}[Proof of Prop. \ref{PROP:L1_GLOBAL_LIMIT}:]
The locus of Lee-Yang-Fisher zeros $\SSS^c_n$ are the zero set (counted with
multiplicities) of the degree $2 d_n$ polynomial $\check \Zpart_n(z,t)$.   The Poincar\'e-Lelong
Formula describes its current of integration:
\begin{eqnarray*}
[\SSS^c_n] = \PLapl \log |\check \Zpart_n(z,t)| = \PLapl \FoverT_n(z,t).
\end{eqnarray*}
Hypothesis (\ref{EQN:C2_L1_LIMIT}) implies
\begin{eqnarray*}
\frac{1}{2 d_n  } [\SSS^c_n] = \PLapl \frac{1}{2 d_n} \FoverT_n(z,t) \rightarrow \PLapl \FoverT(z,t) =: \LYF.
\end{eqnarray*}
\end{proof}

\subsection{Global consideration of partition functions and free energy on $\CP^2$}
\label{SUBSEC:THERMO2}

It will be convenient for us to extend the partition functions $\Zpart_n$ and their
associated free energies $\FoverT_n$ from $\C^2$ to $\CP^2$.  We will use the
homogeneous coordinates  $[Z:T:Y]$ on $\CP^2$, with the  copy of
$\C^2$ given by the affine coordinates  $(z,t)~\mapsto~[z:t:1]$.

For each $n$, we clear the denominators of $\Zpart_n(z,t)$, obtaining a
polynomial $\check \Zpart_n(z,t)$ of degree $d_n:=2|\EE_n|$.  It lifts to a
unique homogeneous polynomial $\hat \Zpart_n(Z,T,Y)$ of the same degree that
satisfies $\check \Zpart_n(z,t) = \hat \Zpart_n(z,t,1).$
The associated free energy becomes a plurisubharmonic function $$\hatFoverT_n(Z,T,Y) := \log|\hat \Zpart_n(Z,T,Y)|$$ on $\C^3$.
It is related by the Poincar\'e-Lelong Formula to the current of integration over the Lee-Yang-Fisher zeros:
$\pi^* [\SSS^c_n] := \PLapl \ \hatFoverT_n(Z,T,Y)$.

Both of these extensions are defined on $\C^3$, rather than $\CP^2$. 
In the proof of the Global LYF Current Theorem, it will be useful for us to interpret
the partition function as an object defined on $\CP^2$.  Instead of being a
function on $\CP^2$, it gets interpreted as a section
$s_{\Zpart_n}$ of an appropriate
tensor power of the hyperplane bundle; See Appendix \ref{hyp bundle}.  The Lee-Yang-Fisher zeros $\SSS^c_n$ are described as the zero locus
of this section.

\comment{*******************************
\msk
Appropriate generalizations of Proposition \ref{PROP:L1_GLOBAL_LIMIT} can be
used to determine whether the limiting distribution of Lee-Yang-Fisher zeros
exists in $\CP^2$.  For example, it suffices that
the sequence of normalized free energies
\begin{eqnarray}\label{EQN:NORMALIZED_FREE_ENERGY_C3}
\frac{1}{d_n} \hatFoverT(z,t,y)
\end{eqnarray}
converges in $L^1_\loc(\C^3)$.  Each of the corresponding sections
$\sigma_{\frac{1}{d_n}\FoverT_n}$ is of the same real line bundle over $\CP^2$, independent of the degree $d_n$, so one
can also justify existence of the limiting distribution of Lee-Yang-Fisher
zeros by showing that these sections converge in $L^1(\CP^2)$.
************************}

\subsection{Migdal-Kadanoff renormalization for the DHL}

The renormalized field-like   and temperature-like variables $z_n$ and  $ t_n$ 
that appear in the Migdal- Kadanoff RG equations (\ref{R-intro})
are defined  through certain 
``conditional partition functions  of level $n$''  in the following way: 
\begin{equation}\label{zt-uv}
   z_n^2= W_n/U_n, \quad t_n^ 2 = \frac {V_n^2} {U_n W_n}.
\end{equation}
%
%
In the $(U,V,W)$-coordinates the Migdal-Kadanoff RG  equation
assumes the  homogeneous form
\begin{eqnarray*}
  U_{n+1}= (U_n^2+V_n^2)^2, \quad V_{n+1}= V_n^2 (U_n+W_n)^2, \quad W_{n+1}= (V_n^2+W_n^2)^2 ,
\end{eqnarray*}
and the  total  partition function  becomes a linear form
$$
   \Zpart_n \equiv \Zpart_{\Gamma_n}= U_n+2V_n+W_n. 
$$
(See Part I \cite{BLR1} for the derivation
of these equations.) 
This leads us to   a homogeneous degree $4$ polynomial map
\begin{eqnarray}\label{EQN:RMIG_HAT}
\hat \Rmig: (U,V,W) \mapsto \left((U^2+V^2)^2 , V^2(U+W)^2, (W^2+V^2)^2\right),
\end{eqnarray}
called the {\em Migdal Kadanoff Renormalization}, such  that $(U_n,V_n,W_n) = \hat \Rmig^n(U_0,V_0,W_0)$.  
(The corresponding map $R: \CP^2\ra \CP^2$ will be  referred to in the same way.)
Moreover,  letting $Y_0:= U+2V + W$, we obtain: 
\begin{equation}\label{EQN:MIGDAL_FREE_ENERGIES}
   \hat \Zpart_n = Y_0 \circ \hat \Rmig^n,      
\end{equation}
so the partition functions $\hat \Zpart_n$ are obtained by pulling the linear
form $Y_0 \equiv \hat \Zpart_0$ back by $\hat\Rmig^n$. 

We will often write $\Rmig$ in the system of local coordinates $u = U/V$ and $w=W/V$, in which it has the form
\begin{equation}\label{uv coord}
   \Rmig: (u,w)\mapsto \left( \frac{u^2+1}{ u+w},\ \frac{w^2+1}{u+w} \right)^2.
\end{equation}

Notice  that the form $Y_0$ is {\it not a function} on $\CP^2$ but rather a
{\it section} $s_{Y_0}$ of  the  hyperplane line bundle over $\CP^2$,  see
Appendix \ref{hyp bundle}.  Respectively, the partition functions $\hat \Zpart_n$ 
are sections  of the tensor powers of this  line bundle.
Accordingly, the Lee-Yang-Fisher loci $S_n^c$ are the  {\it
 zero divisors} of these sections.  

The free energy is also no longer a function on $\CP^2$, rather it is lifted to become a function on $\C^3$, given by
\begin{eqnarray}
\hatFoverT_n := \log|\hat{\Zpart}_n|.
\end{eqnarray}

\msk 
The above formulae express the partition functions and free energies in
terms of the $U,V,W$ coordinates.  To re-express them in terms of the physical
coordinates, we pull each of them back by
\begin{equation}
  \correspond: \CP^2\ra \CP^2, \quad (U:V:W) = \correspond(z,t) = (z^{-1} t^{-1/2} : t^{1/2} : z t^{-1/2}).
\end{equation}
This change of variables also 
semi-conjugates the map
\begin{equation}\label{R}
    \Rphys: (z,t)\mapsto  \left( \frac{z^2+t^2}{z^{-2}+t^2}, \ \frac{z^2+z^{-2}+2}{z^2+z^{-2}+t^2+t^{-2}}\right),
\end{equation} 
corresponding to RG equation (\ref{R-intro}), to $\Rmig$.



\section{Global properties of the RG transformation in $\CP^2$}\label{SEC:GLOBAL STRUCTURE}
 
We will now summarize (typically without proofs) results from \cite{BLR1} about the global properties of
the RG mappings.

\subsection{Preliminaries}\label{SUBSEC:SEMICONJUGACY}

The renormalization mappings $\Rphys$ and $\Rmig$ are semi-conjugate by the
degree two rational mapping $\correspond: \CP^2 \ra \CP^2$ given by
(\ref{zt-uv}).  

Both mappings have topological degree $8$ (see Proposition 4.3 from Part~I).
However, as noted in the Introduction,  their algebraic degrees behave
differently: $\Rmig$ is algebraically stable, while $\Rphys$ is not.
Since $\deg(\Rmig^n) = 4^n$, for any algebraic curve $D$ of
degree $d$,  the pullback $(\Rmig^n)^* D$ is a divisor of degree $d \cdot
4^n$. (For background on divisors, see \cite[Appendix A.3]{BLR1}.)  For
this reason, we will focus most of our attention on the dynamics of $\Rmig$.

The semiconjugacy $\correspond$ sends the Lee-Yang cylinder $\Cphys:=\mathbb{T} \times [0,1]$ to a Mobius
band $\Cmig$ that is invariant under $\Rmig$.  It is obtained as the closure 
in $\CP^2$ of the topological annulus
\begin{equation}\label{C-D}
     \Cmigbl= \{(u,w)\in \C^2:\ w=\bar u, \ |u|\geq 1\}.
\end{equation}
Let $\TOPmig= \{(u,\bar u): \ |u|=1\} $ be the  ``top'' circle of $\Cmig$,
and let  $\BOTTOMmig$ be the slice of $\Cmig$ at infinity.  In fact,
$\correspond: \Cphys \ra \Cmig$ is a conjugacy, except that it maps the bottom
$\BOTTOMphys$ of $\Cphys$ by a $2$-to-$1$ mapping to $\BOTTOMmig$ (see Proposition 3.1
from Part~I).

\subsection{Indeterminacy points for $\Rmig$}
\label{SUBSEC:INDETERMINANT_PTS}

In homogeneous coordinates on $\CP^2$, the map $\Rmig$ has the form:
\begin{equation} \label{EQN:MK_HOMOG}
  \Rmig : [U: V: W] \mapsto [(U^2+ V^2)^2: \  V^2(U + W)^2: \ (V^2+W^2)^2)].
\end{equation}
One can see that $\Rmig$ has precisely two points of indeterminacy $\INDmig_+
:= [i:1:-i]$ and $\INDmig_-:=[-i:1:i]$.  Resolving all of the indeterminacies
of $\Rmig$ by blowing-up the two points $\INDmig_\pm$ (see 
\cite[Appendix A.2]{BLR1}),  one obtains a holomorphic mapping $\widetilde \Rmig: \widetilde \CP^2
\ra \CP^2$.  

In  coordinates
$\xi = u-i$ and $\chi =  (w+i)/(u-i)$ near $\INDmig_+ = (i,-i)$, we obtain the following expression for
the map $\widetilde \Rmig: \widetilde \CP^2\ra \CP^2$ near  $L_\ex(\INDmig_+)$:
\begin{equation}\label{blow up near a+}
    u = \left(\frac{\xi+2i}{1+\chi}\right)^2,\quad  w = \left(\frac{\chi^2 \xi-2i \chi}{1+\chi}\right)^2.
\end{equation}
(Similar formulas hold near $\INDmig_- = (-i,i)$.)
The exceptional divisor $L_\ex(\INDmig_+)$ is mapped by $\tl \Rmig$ to the conic
\begin{eqnarray*}
G:= \{(u-w)^2+8(u+w)+16 = 0\}.
\end{eqnarray*}

\subsection{Superattracting fixed points and their separatrices}
\label{SUBSEC:FIXED POINTS}

We will often refer to $\Lzero:= \{V=0\}\subset \CP^2$ as the  {\it line at infinity}.
It contains two symmetric superattracting fixed  points, $e=(1:0:0)$ and $e'=(0:0:1)$.  
Let $\WW^s(e)$ and $\WW^s(e')$ stand for the attracting basins of these points.
It will be useful to consider local coordinates $(\xi= W/U, \, \eta= V/U)$ near $e$.

The line at infinity  $\Lzero=\{\eta=0\}$ is $R$-invariant, and the restriction $R|\Lzero$ is the
power map $\xi\mapsto \xi^4$. Thus,  points in the disk $\{ |\xi|<1\}$ in $\Lzero$
are attracted to $e$, points in the disk $\{ |\xi|> 1\} $ are attracted to  $e'$,
and these two basins are separated by the unit  circle $\BOTTOMmig$.
We will also call $\Lzero$ the {\it fast separatrix} of $e$ and $e'$.

Let us also consider the conic
\begin{equation}\label{Lone}
   \Lone = \{ \xi=\eta^2\}= \{V^2= UW\}
\end{equation}
passing through points $e$ and $e'$.  It is an embedded copy of $\CP^1$ that is
invariant under $\Rmig$, with $R|\, \Lone(w) = w^2$, where $w=W/V= \xi/\eta$.
Thus,  points in the disk $\{ |w|<1\}$ in $\Lone$ are attracted to $e$, points
in the disk $\{ |w|> 1\} $ are attracted to  $e'$, and these two basins are
separated by the unit circle $\TOPmig$ (see \S 3.1 from Part~I).  We will call
$\Lone$ the {\it slow separatrix} of $e$ and $e'$.

If a point $x$ near $e$ (resp. $e'$) does not belong to the fast separatrix
$\Lzero$, then its orbit is ``pulled'' towards the slow separatrix $\Lone$ at
rate $\rho^{4^n}$, with some $\rho<1$, and converges to $e$ (resp. $e'$) along
$\Lone$ at rate $r^{2^n}$, with some $r<1$.

The strong separatrix $\Lzero$ is transversally superattracting:
all nearby points are pulled towards $\Lzero$ uniformly at rate $r^{2^n}$.
It follows that these points either converge  to one of the fixed points, $e$ or $e'$,
or converge to the circle $\BOTTOMmig$.

Given a neighborhood $\Om$ of $\BOTTOMmig$, let
\begin{equation}\label{complex basin of B}
  \WW^s_{\C,\loc} (\BOTTOMmig)= \{x\in \CP^2:\ R^n x\in \Om\ (n\in \N) \ {\mathrm {and}}\
        \R^n x\to \BOTTOMmig\ \mathrm{as}\ n\to \infty \}
\end{equation}
(where $\Om$ is implicit in the notation, and an assertion involving
$\WW^s_{\C,\loc}(\BOTTOMmig)$ means that it holds for arbitrary small suitable
neighborhoods of $\BOTTOMmig$).  
It is shown in Part~I (\S 9.2) that $\WW^s_{\C,\loc} (\BOTTOMmig)$ has the topology of a
$3$-manifold that is laminated by the union of holomorphic local stable manifolds
$W^s_{\C,\loc}(x)$ of points $x \in \BOTTOMmig$.

We conclude:
\begin{lem}\label{nbd of B}
 $\WW^s(e)\cup \WW^s(e')\cup \WW^s_{\C,\loc}(\BOTTOMmig)$ fills in some neighborhood of $\Lzero$.
\end{lem}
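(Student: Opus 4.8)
The plan is to combine the local dynamical pictures near the two superattracting fixed points $e,e'$ and near the invariant circle $\BOTTOMmig$, gluing them by means of the transversal superattraction of $\Lzero$; the lemma is, in effect, a precise form of the remark preceding it. First I would note that, since $e$ and $e'$ lie off the indeterminacy locus $\{\INDmig_\pm\}$, the basins $\WW^s(e)$ and $\WW^s(e')$ are open, and that by the slow/fast separatrix description recalled above every point sufficiently close to $e$ (resp.\ to $e'$) lies in $\WW^s(e)$ (resp.\ in $\WW^s(e')$). Since moreover the disks $\{|\xi|<1\}$ and $\{|\xi|>1\}$ in $\Lzero$ are attracted, within $\Lzero$, to $e$ and to $e'$ respectively, the open set $\WW^s(e)\cup\WW^s(e')$ already contains $\Lzero\setminus\BOTTOMmig$. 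Hence the statement reduces to producing one neighborhood $\Om_0$ of the circle $\BOTTOMmig$ with $\Om_0\subset\WW^s(e)\cup\WW^s(e')\cup\WW^s_{\C,\loc}(\BOTTOMmig)$: then $\UUU:=\WW^s(e)\cup\WW^s(e')\cup\Om_0$ is an open neighborhood of all of $\Lzero$ contained in the required union.

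To build $\Om_0$ I would work in the local picture near $\BOTTOMmig$. Fix a small $\de>0$ and set $A_\de=\{1-\de\le|\xi|\le 1+\de\}\cap\Lzero\supset\BOTTOMmig$. The compact disks $\{|\xi|\le 1-\de\}\cap\Lzero$ and $\{|\xi|\ge 1+\de\}\cap\Lzero$ sit inside the open sets $\WW^s(e)$ and $\WW^s(e')$, so some $\epsi_1$-neighborhoods of them do too. Using the transversal superattraction of $\Lzero$ — a forward-invariant tubular neighborhood $N$ of $\Lzero$, disjoint from $\{\INDmig_\pm\}$, on which $\dist(R^n x,\Lzero)\le r^{2^n}\dist(x,\Lzero)$ — I would choose $\Om_0$ to be a neighborhood of $\BOTTOMmig$ inside $N$, small enough that the forward orbit of each $x\in\Om_0$ stays in $N$ with $\dist(R^n x,\Lzero)<\epsi_1$ for all $n$, and that the $\epsi_1$-neighborhood of $A_\de$ is contained in the auxiliary neighborhood $\Om$ implicit in the definition of $\WW^s_{\C,\loc}(\BOTTOMmig)$. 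For such $x$, picking $q_n\in\Lzero$ nearest to $R^n x$ and using $R(\Lzero)=\Lzero$ together with the Lipschitz bound for $R$ on $N$, the sequence $(q_n)$ is a pseudo-orbit of the power map $R|\Lzero\colon\xi\mapsto\xi^4$ with errors $O(r^{2^n})$, i.e.\ super-exponentially small.

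The proof is then completed by a trichotomy for $(q_n)$. If $|q_{n_0}|\le 1-\de$ for some $n_0$, then $R^{n_0}x$ lies within $\epsi_1$ of $\{|\xi|\le 1-\de\}\cap\Lzero\subset\WW^s(e)$, so $x\in\WW^s(e)$; symmetrically $|q_{n_0}|\ge 1+\de$ gives $x\in\WW^s(e')$. Otherwise $1-\de<|q_n|<1+\de$ for all $n$; setting $\rho_n=\log|q_n|$, the pseudo-orbit relation becomes $\rho_{n+1}=4\rho_n+\tau_n$ with $|\tau_n|=O(r^{2^n})$ and $(\rho_n)$ bounded, which forces $\rho_n=-\sum_{j\ge 0}4^{-1-j}\tau_{n+j}$ and hence $|\rho_n|=O(r^{2^n})\to 0$. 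Thus $q_n\to\BOTTOMmig$; together with $\dist(R^n x,\Lzero)\to 0$ and the fact that each $R^n x$ stays within $\epsi_1$ of $A_\de$ (hence in $\Om$), this yields $R^n x\to\BOTTOMmig$ with the entire forward orbit in $\Om$, i.e.\ $x\in\WW^s_{\C,\loc}(\BOTTOMmig)$. All cases being covered, $\Om_0$ has the required property.

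The step I expect to be the main obstacle is this last one: $\WW^s_{\C,\loc}(\BOTTOMmig)$ is only of real codimension one, so one must show that its complement near $\BOTTOMmig$ is absorbed into the two open basins. This is a local normal-hyperbolicity fact: $\BOTTOMmig$ is an invariant circle of saddle type — super-exponential contraction transverse to $\Lzero$, expanding circle map $\xi\mapsto\xi^4$ along $\Lzero$ — whose local stable set is $\WW^s_{\C,\loc}(\BOTTOMmig)$, and an orbit that leaves a small neighborhood of $\BOTTOMmig$ must leave along $\Lzero$ and so falls into $\WW^s(e)\cup\WW^s(e')$. Its quantitative heart is the comparison of the transverse errors ($\sim r^{2^n}$) with the expansion scale $4^{-n}$ along $\Lzero$, which is what makes the pseudo-orbit argument above go through; alternatively, the same conclusion can be read off from the description of $\WW^s_{\C,\loc}(\BOTTOMmig)$ obtained in Part~I, \S 9.2.
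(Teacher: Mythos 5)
Your proof is correct and fleshes out exactly the argument the paper gestures at: the authors state this lemma without a separate proof, presenting it as a consequence of the superexponential transverse attraction to $\Lzero$ together with the $\xi\mapsto\xi^4$ dynamics on it (``It follows that these points either converge to one of the fixed points, $e$ or $e'$, or converge to the circle $\BOTTOMmig$''). The pseudo-orbit trichotomy you run---projecting $R^n x$ to its nearest point $q_n\in\Lzero$, writing $\rho_{n+1}=4\rho_n+\tau_n$ with $\tau_n$ superexponentially small, and solving backwards to force $\rho_n\to 0$ in the bounded case---is precisely the quantitative bookkeeping that makes that one-sentence conclusion rigorous; the only slip is the cosmetic form of the transverse bound (it should be $\dist(R^n x,\Lzero)\lesssim (C\dist(x,\Lzero))^{2^n}$ rather than $r^{2^n}\dist(x,\Lzero)$), which does not affect the argument since all that is used is that the errors decay faster than $4^{-n}$.
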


\subsection{Regularity of $\WW^s_{\C,\loc}(x)$}
\label{SUBSEC:REGULARITY}
For a diffeomorphism the existence and regularity of the local stable
manifold for a hyperbolic invariant manifold $N$ has been studied extensively
in \cite{HPS}.  In order to guarantee a $C^1$ local stable manifold
$\WW^s_{\loc}(N)$, a strong form of hyperbolicity known as {\em normal
hyperbolicity} is assumed.  Essentially, $N$ is normally hyperbolic for $f$ if the
expansion of $Df$ in the transverse unstable direction dominates the maximal tangent expansion of
$D(f|_N)$ and the contraction of $Df$ in the transverse stable direction
dominates the maximal tangent contraction of $D(f|_N)$.  See \cite[Thm.
1.1]{HPS}.  If, furthermore, the expansion of $Df$ in the transverse unstable direction dominates
the $r$-th power of the maximal tangent expansion of $D(f|_N)$ and the contraction
of $Df$ in the transverse stable direction dominates the $r$-th power of the maximal tangent contraction
of $D(f|_N)$, this guarantees that the stable manifold is of class $C^r$.
The corresponding theory for endomorphisms is less developed, although note that
some aspects of \cite{HPS}, related to persistence of normally hyperbolic invariant laminations,
have been generalized to endomorphisms in \cite{BERGER}.

In our situation, $\BOTTOMmig$ is not normally hyperbolic because it lies
within the invariant line $L_0$ and $\Rmig$ is holomorphic.  This forces the
expansion rates tangent to $\BOTTOMmig$ and transverse to $\BOTTOMmig$
(within this line) to coincide.  Therefore, the following result does not seem
to be part of the standard hyperbolic theory:

\begin{lem}\label{LEM:REGULARITY}
$\WW^s_{\C,\loc}(\BOTTOMmig)$ is a $C^\infty$ manifold and the stable foliation is a $C^\infty$ foliation by complex analytic discs.
\end{lem}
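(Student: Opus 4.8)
The plan is to reduce everything to a local statement near $\BOTTOMmig$, put $\Rmig$ there into the normal form that its explicit formula provides, realize the stable lamination as the fixed point of a graph transform, and then bootstrap its regularity to $C^\infty$ using that the transverse contraction is \emph{super}-exponential and hence eventually dominates every fixed exponential rate of $\Rmig$. First I would observe that the indeterminacy points $\INDmig_\pm=[i:1:-i],[-i:1:i]$ lie at Euclidean distance $\asymp 1$ from $\BOTTOMmig$ in the chart $(\xi=W/U,\ \eta=V/U)$ (in which $\Lzero=\{\eta=0\}$ and $\BOTTOMmig=\{\eta=0,\ |\xi|=1\}$), so $\Rmig$ is holomorphic on a neighborhood $\Om$ of $\BOTTOMmig$, where by $(\ref{EQN:MK_HOMOG})$ it has the form
\[
\Rmig:(\xi,\eta)\longmapsto\bigl(f(\xi,\eta),\,g(\xi,\eta)\bigr),\qquad f=\Bigl(\tfrac{\xi^2+\eta^2}{1+\eta^2}\Bigr)^{2},\quad g=\eta^2\Bigl(\tfrac{1+\xi}{1+\eta^2}\Bigr)^{2}.
\]
A short computation then records the structure we need: $\BOTTOMmig$ is invariant with $\Rmig|\Lzero:\xi\mapsto\xi^4$ uniformly expanding of rate $4$; $|g(\xi,\eta)|\le C|\eta|^2$ on $\Om$, so after shrinking $\Om\subset\{|\eta|<\eps_0\}$ with $C\eps_0<1$ the $k$-th iterate has transverse size $|\eta_k|\le(C|\eta|)^{2^k}/C$ — the super-attraction of $\S\ref{SUBSEC:FIXED POINTS}$; and $D\Rmig=\mathrm{diag}(4\xi^3,0)$ along $\BOTTOMmig$, so the contracting direction is exactly the $\eta$-axis. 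Thus $\BOTTOMmig$ is a uniformly hyperbolic invariant set with $T_{\BOTTOMmig}\CP^2=T\Lzero\oplus E^s$, $\|D\Rmig|_{E^s}\|=0$ on $\BOTTOMmig$ and $=O(|\eta|)$ nearby.

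Next I would build the lamination. Seek the leaf through $(\si,0)$ as a graph over the transverse disc, $W^s_{\C,\loc}(\si,0)=\{(\Phi(\eta;\si),\eta):|\eta|<\eps_0\}$ with $\Phi(\cdot;\si)$ holomorphic, $\Phi(0;\si)=\si$, and (forced by the equation below) $\partial_\eta\Phi(0;\si)=0$. The invariance $\Rmig\bigl(W^s_{\C,\loc}(\si,0)\bigr)\subset W^s_{\C,\loc}(\si^4,0)$ is equivalent to the functional equation $f(\Phi(\eta;\si),\eta)=\Phi\bigl(g(\Phi(\eta;\si),\eta);\si^4\bigr)$, i.e.\ the family $\mathbf\Phi=\{\Phi(\cdot;\si)\}_{\si\in\BOTTOMmig}$ is a fixed point of the graph transform $\mathcal T$ that sends a family of discs over $\D_{\eps_0}$ to the appropriate local branches of its $\Rmig$-preimages (the implicit function theorem applies since $|f_\xi|\asymp4$ while $g_\xi=O(\eps_0^2)$ on $\Om$). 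Because $\Rmig$ contracts $\xi$-differences by the factor $4^{-1}$ on $\Lzero$ and $g_\xi=O(\eps_0^2)$, $\mathcal T$ is a contraction in the $C^0$ norm; its fixed point $\mathbf\Phi$ agrees with the lamination of \cite{BLR1}$(\S 9.2)$, and since $\mathcal T$ preserves holomorphy in $\eta$ (uniform limits of holomorphic maps), each leaf $W^s_{\C,\loc}(\si,0)$ is a complex analytic disc.

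The substantive step is upgrading this to $C^\infty$. Start from $\Phi_0(\eta;\si)\equiv\si$, so $\mathbf\Phi=\lim_n\mathcal T^n\mathbf\Phi_0$ uniformly with each $\mathcal T^n\mathbf\Phi_0$ of class $C^\infty$. Differentiating the functional equation for $\mathcal T^n\mathbf\Phi_0$ in $\si$ and telescoping gives
\[
\partial_\si\bigl(\mathcal T^n\mathbf\Phi_0\bigr)(\eta;\si)=\prod_{k=0}^{n-1}A_k^{(n)}(\eta,\si),\qquad A_k=1+O\bigl(|\eta_k|^2\bigr),
\]
where $\eta_k$ is the transverse coordinate after $k$ forward iterates (so $|\eta_k|\lesssim(C|\eta|)^{2^k}$) and $A_k^{(n)}$ is ``$A$ evaluated at the $k$-th iterate'' along the orbit of $\mathcal T^n\mathbf\Phi_0$. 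These infinite products converge uniformly — and, crucially, in $C^m$ for every $m$: by the chain rule $\|A_k-1\|_{C^m}\lesssim|\eta_k|\cdot\|D\Rmig^k\|^{m}\lesssim(C|\eta|)^{2^k}\,(C_m4^{m})^{k}$, which is summable over $k$ because the double-exponential factor $(C|\eta|)^{2^k}$ dominates the single-exponential growth $(C_m4^m)^k$ of the $m$-jet of $\Rmig^k$. Hence $\partial_\si(\mathcal T^n\mathbf\Phi_0)$ converges in every $C^m$ to a $C^\infty$ limit $\Lambda=\prod_{k\ge0}A_k$; together with $\mathcal T^n\mathbf\Phi_0\to\mathbf\Phi$ in $C^0$ this yields $\mathbf\Phi\in C^\infty$ with $\partial_\si\Phi=\Lambda$ (equivalently one runs an induction on $m$: $\mathbf\Phi\in C^m\Rightarrow A_k\in C^m$ with the above bound $\Rightarrow\partial_\si\Phi=\prod_kA_k\in C^m\Rightarrow\mathbf\Phi\in C^{m+1}$). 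Then $\Xi:(\si,\eta)\mapsto(\Phi(\eta;\si),\eta)$ is a $C^\infty$ diffeomorphism of $\BOTTOMmig\times\D_{\eps_0}$ onto $\WW^s_{\C,\loc}(\BOTTOMmig)$ straightening the lamination; so the stable foliation is $C^\infty$, and each $\WW^s_{\C,\loc}(x)$, being a complex analytic disc, is in particular a $C^\infty$ (indeed real-analytic) submanifold.

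The main obstacle is exactly the one flagged before the statement: since $\BOTTOMmig$ sits inside the invariant line $\Lzero$ and $\Rmig$ is holomorphic, the expansion rates along $\BOTTOMmig$ and transverse to it within $\Lzero$ coincide, $\BOTTOMmig$ fails classical normal hyperbolicity, and the $C^r$-section/foliation theorems of \cite{HPS} (whose endomorphism versions are in any case only partially available, cf.\ \cite{BERGER}) do not apply off the shelf. Concretely, $\mathcal T$ is not even a $C^1$-contraction, because the $\si$-derivative of $\mathcal T$ carries coefficient $\approx f_\xi^{-1}\partial_\si(\si^4)\approx1$; one cannot simply cite a fibre-contraction argument. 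What rescues the proof — and forces $C^\infty$ rather than merely finite regularity — is the ``infinite spectral gap'' coming from $\|D\Rmig^k|_{E^s}\|\asymp|\eta|^{2^k}$, which makes the telescoping products above converge in all $C^m$ simultaneously; verifying the $C^m$-summability bookkeeping (and that the leaves stay in $\Om$ with the claimed uniform size) is the routine part I would not write out in full.
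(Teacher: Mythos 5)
Your proof is correct, but it follows a genuinely different route from the paper's. The paper's argument is a \emph{complexification}: it invokes Proposition~9.11 from Part I, which already establishes that the stable foliation of $\BOTTOMphys$ inside the real cylinder $\Cphys$ is $C^\infty$ with real-analytic leaves; pushing this forward under $\correspond$ gives the same for $\BOTTOMmig$ inside $\Cmig$, and then the key step is that each holomorphic stable disc $\xi = h(\eta,\xi_0)=\sum a_i(\xi_0)\eta^i$ (uniform radius of convergence) is uniquely determined by its trace on the real slice $\Cmig$, so $C^\infty$ dependence of the real leaves on $\xi_0$ forces every Taylor coefficient $a_i(\xi_0)$ --- hence the whole complex disc --- to depend $C^\infty$ on $\xi_0$. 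Your argument instead rebuilds the lamination from scratch in the complex chart as the fixed point of a graph transform and extracts $C^\infty$ transverse regularity directly from the super-attraction $|\eta_k|\lesssim (C|\eta|)^{2^k}$: the double-exponential decay dominates the at-most-exponential growth $(C_m 4^m)^k$ of the $m$-jets of $\Rmig^k$, so the telescoping products $\prod_k A_k$ converge in every $C^m$ simultaneously. Your diagnosis that the fiber contraction is \emph{not} a $C^1$-contraction in the usual sense (the leading coefficient of $\partial_\si\mathcal T$ is $\approx 1$), and that it is the infinite spectral gap of the super-attracting direction that rescues $C^\infty$, is precisely the structural point the paper flags just before the statement. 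What your route buys is self-containedness (no appeal to Part I) and a transparent mechanism for ``why $C^\infty$ and not merely finite $C^r$''; what the paper's route buys is brevity, since Part I has already done the analogous work on the real slice. The remaining bookkeeping in your sketch (well-posedness of $\mathcal T$ on a suitable space of graph families, Faà di Bruno constants, invariance of $\Om$) is genuinely routine, as you note.
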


\begin{proof}
In Proposition 9.11 from Part~I, we showed that within the cylinder $\Cphys$
the stable foliation of $\BOTTOMphys$ has $C^\infty$ regularity and that the
stable curve of each point is real analytic.  Mapping forward under
$\correspond$, we obtain the same properties for the stable foliation of
$\BOTTOMmig$ within $\Cmig$.

Let us work in the local coordinates $\xi = W/U$ and $\eta = V/U$.  In these coordinates, $\BOTTOMmig = \{\eta = 0, |\xi|=1\}$.
The stable curve $W^s_{\C,\loc}(\xi_0)$ of any $\xi_0 \in \BOTTOMmig$ can be given by expressing $\xi$ as a holomorphic function of 
$\eta$:

\begin{eqnarray}
\xi = h(\eta,\xi_0) = \sum_{i=0}^\infty a_i(\xi_0) \eta^i.
\end{eqnarray}
The right hand side is a convergent power series with coefficients depending on $\xi_0$, having a uniform radius of convergence
over every $\xi_0 \in \BOTTOMphys$.
The series is uniquely determined by its values on the real slice $\Cmig$, in which the leaves depend with $C^\infty$
regularity on $\xi_0$.  Therefore, each of the coefficients $a_i(\xi_0)$ is $C^\infty$ in $\xi_0$.  This gives
that each $W^s_{\C,\loc}(\xi_0)$  depends with $C^\infty$ regularity on $\xi_0$, implying the stated result.
\end{proof}

\begin{rem}The technique from the proof of Lemma \ref{LEM:REGULARITY} applies
to a 
more general situation:  Suppose that $M$ is a
real analytic manifold and $f: M \ra M$ is a real analytic map.  Let $N \subset
M$ be a compact real analytic invariant submanifold for $f$, with $f|N$
expanding and with $N$ transversally attracting under $f$.  Then $N$ will have
a stable foliation $\WW^s_\loc(N)$ of regularity $C^r$, for some $r >  0$ (see the beginning of this subsection), with the stable
manifold of each point being real-analytic.  The stable manifold
$\WW^s_{\C,\loc}(N)$ for the extension of $f$ to the complexification $M_\C$ of
$M$ will then also have $C^r$ regularity.
\end{rem}

\begin{rem} It has been shown by Kaschner and the third author that $\WW^s_{\C,\loc}(\BOTTOMmig)$ is not real analytic at any point \cite[Thm. B]{KR}.
\end{rem}

\subsection{Critical locus}\label{SUBSEC:CRITICAL_LOCUS}
The complex Jacobian of $\hat R: \C^3\ra \C^3$ (\ref{EQN:RMIG_HAT}) is equal to
\begin{align}\label{JAC_IN_HOMOG}
\Jac \hat R \equiv   \det D\hat R = 32\, V\, (UW-V^2)\, (U+W)^2\, (U^2+V^2)\, (W^2+V^2),
\end{align}
and therefore the critical locus of $\Rmig$ consists of               
6 complex lines and one conic:
\begin{eqnarray*}
\Lzero &:=& \{V=0\}      = \mbox{line at infinity},\\
\Lone &:=& \{UW= V^2\} = \mbox{conic}\ \{ uw=1\}, \\
\Ltwo &:=& \{U = -W\}  =  \{ u=-w\} = \mbox{ the collapsing line},
                \\
\Lthree^\pm &:=& \{U = \pm i V \} = \{ u=\pm i\}, \\
\Lfour^\pm &:=& \{W = \pm i V\} =  \{ w=\pm i\}.
\end{eqnarray*}
(Here the  curves are written in the homogeneous coordinates $(U:V:W)$
and in the affine ones, $(u=U/V, w=W/V)$.)
The critical locus is schematically depicted on Figure~\ref{FIG:CRITICAL_CURVES}, while its image, the critical value locus,  is
depicted on Figure~\ref{FIG:MK_CONFIGURATION}.  

\begin{figure}
\begin{center}

\begin{picture}(0,0)%
\includegraphics{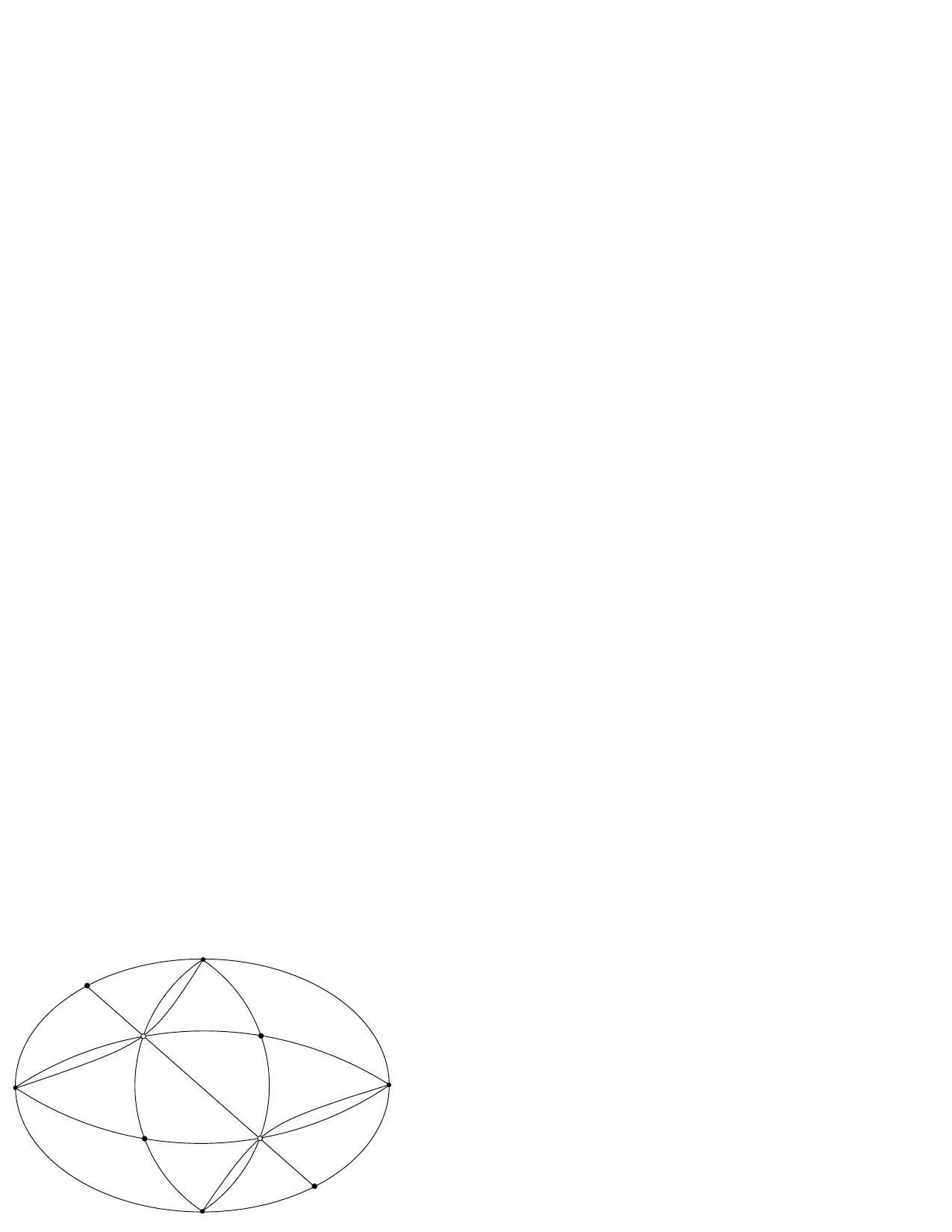}%
\end{picture}%
\setlength{\unitlength}{3947sp}%
\begingroup\makeatletter\ifx\SetFigFont\undefined%
\gdef\SetFigFont#1#2#3#4#5{%
  \reset@font\fontsize{#1}{#2pt}%
  \fontfamily{#3}\fontseries{#4}\fontshape{#5}%
  \selectfont}%
\fi\endgroup%
\begin{picture}(4247,3107)(4032,-6056)
\put(4126,-3286){\makebox(0,0)[lb]{\smash{{\SetFigFont{8}{9.6}{\familydefault}{\mddefault}{\updefault}{\color[rgb]{0,0,0}$\CROSSING:=[1:0:-1]$}%
}}}}
\put(4788,-4429){\makebox(0,0)[lb]{\smash{{\SetFigFont{8}{9.6}{\familydefault}{\mddefault}{\updefault}{\color[rgb]{0,0,0}Separatrix}%
}}}}
\put(6158,-6010){\makebox(0,0)[lb]{\smash{{\SetFigFont{8}{9.6}{\familydefault}{\mddefault}{\updefault}{\color[rgb]{0,0,0}$\CFIXmig'$}%
}}}}
\put(7444,-5705){\makebox(0,0)[lb]{\smash{{\SetFigFont{8}{9.6}{\familydefault}{\mddefault}{\updefault}{\color[rgb]{0,0,0}$[1:0:-1]$}%
}}}}
\put(7406,-3375){\makebox(0,0)[lb]{\smash{{\SetFigFont{8}{9.6}{\familydefault}{\mddefault}{\updefault}{\color[rgb]{0,0,0}$\Lzero$}%
}}}}
\put(7133,-3261){\makebox(0,0)[lb]{\smash{{\SetFigFont{8}{9.6}{\familydefault}{\mddefault}{\updefault}{\color[rgb]{0,0,0}Separatrix}%
}}}}
\put(8264,-4510){\makebox(0,0)[lb]{\smash{{\SetFigFont{8}{9.6}{\familydefault}{\mddefault}{\updefault}{\color[rgb]{0,0,0}Fixed point $\CFIXmig$}%
}}}}
\put(5597,-3510){\makebox(0,0)[lb]{\smash{{\SetFigFont{8}{9.6}{\familydefault}{\mddefault}{\updefault}{\color[rgb]{0,0,0}$\Lfour^-$}%
}}}}
\put(7311,-4023){\makebox(0,0)[lb]{\smash{{\SetFigFont{8}{9.6}{\familydefault}{\mddefault}{\updefault}{\color[rgb]{0,0,0}$\Lthree^+$}%
}}}}
\put(6051,-4329){\makebox(0,0)[lb]{\smash{{\SetFigFont{8}{9.6}{\familydefault}{\mddefault}{\updefault}{\color[rgb]{0,0,0}$\Ltwo$}%
}}}}
\put(6158,-4450){\makebox(0,0)[lb]{\smash{{\SetFigFont{8}{9.6}{\familydefault}{\mddefault}{\updefault}{\color[rgb]{0,0,0}collapsing}%
}}}}
\put(6318,-4576){\makebox(0,0)[lb]{\smash{{\SetFigFont{8}{9.6}{\familydefault}{\mddefault}{\updefault}{\color[rgb]{0,0,0}line}%
}}}}
\put(5624,-5023){\makebox(0,0)[lb]{\smash{{\SetFigFont{8}{9.6}{\familydefault}{\mddefault}{\updefault}{\color[rgb]{0,0,0}$-(i,i)$}%
}}}}
\put(6891,-3903){\makebox(0,0)[lb]{\smash{{\SetFigFont{8}{9.6}{\familydefault}{\mddefault}{\updefault}{\color[rgb]{0,0,0}$(i,i)$}%
}}}}
\put(7071,-4764){\makebox(0,0)[lb]{\smash{{\SetFigFont{8}{9.6}{\familydefault}{\mddefault}{\updefault}{\color[rgb]{0,0,0}$\Lone$}%
}}}}
\put(6590,-3483){\makebox(0,0)[lb]{\smash{{\SetFigFont{8}{9.6}{\familydefault}{\mddefault}{\updefault}{\color[rgb]{0,0,0}$\Lfour^+$}%
}}}}
\put(5551,-4150){\makebox(0,0)[lb]{\smash{{\SetFigFont{8}{9.6}{\familydefault}{\mddefault}{\updefault}{\color[rgb]{0,0,0}$\INDmig_-$}%
}}}}
\put(6798,-5244){\makebox(0,0)[lb]{\smash{{\SetFigFont{8}{9.6}{\familydefault}{\mddefault}{\updefault}{\color[rgb]{0,0,0}$\INDmig_+$}%
}}}}
\put(7357,-5040){\makebox(0,0)[lb]{\smash{{\SetFigFont{8}{9.6}{\familydefault}{\mddefault}{\updefault}{\color[rgb]{0,0,0}$\Lthree^-$}%
}}}}
\put(5396,-3060){\makebox(0,0)[lb]{\smash{{\SetFigFont{8}{9.6}{\familydefault}{\mddefault}{\updefault}{\color[rgb]{0,0,0}Fixed point $\CFIXmig'$}%
}}}}
\put(4047,-4550){\makebox(0,0)[lb]{\smash{{\SetFigFont{8}{9.6}{\familydefault}{\mddefault}{\updefault}{\color[rgb]{0,0,0}$\CFIXmig$}%
}}}}
\put(5061,-4543){\makebox(0,0)[lb]{\smash{{\SetFigFont{8}{9.6}{\familydefault}{\mddefault}{\updefault}{\color[rgb]{0,0,0}$\Lone$}%
}}}}
\end{picture}%

\end{center}
\caption{\label{FIG:CRITICAL_CURVES}Critical locus for $\Rmig$ shown with the separatrix $\Lzero$ at infinity.}
\end{figure}

\begin{figure}
\begin{center}

\begin{picture}(0,0)%
\includegraphics{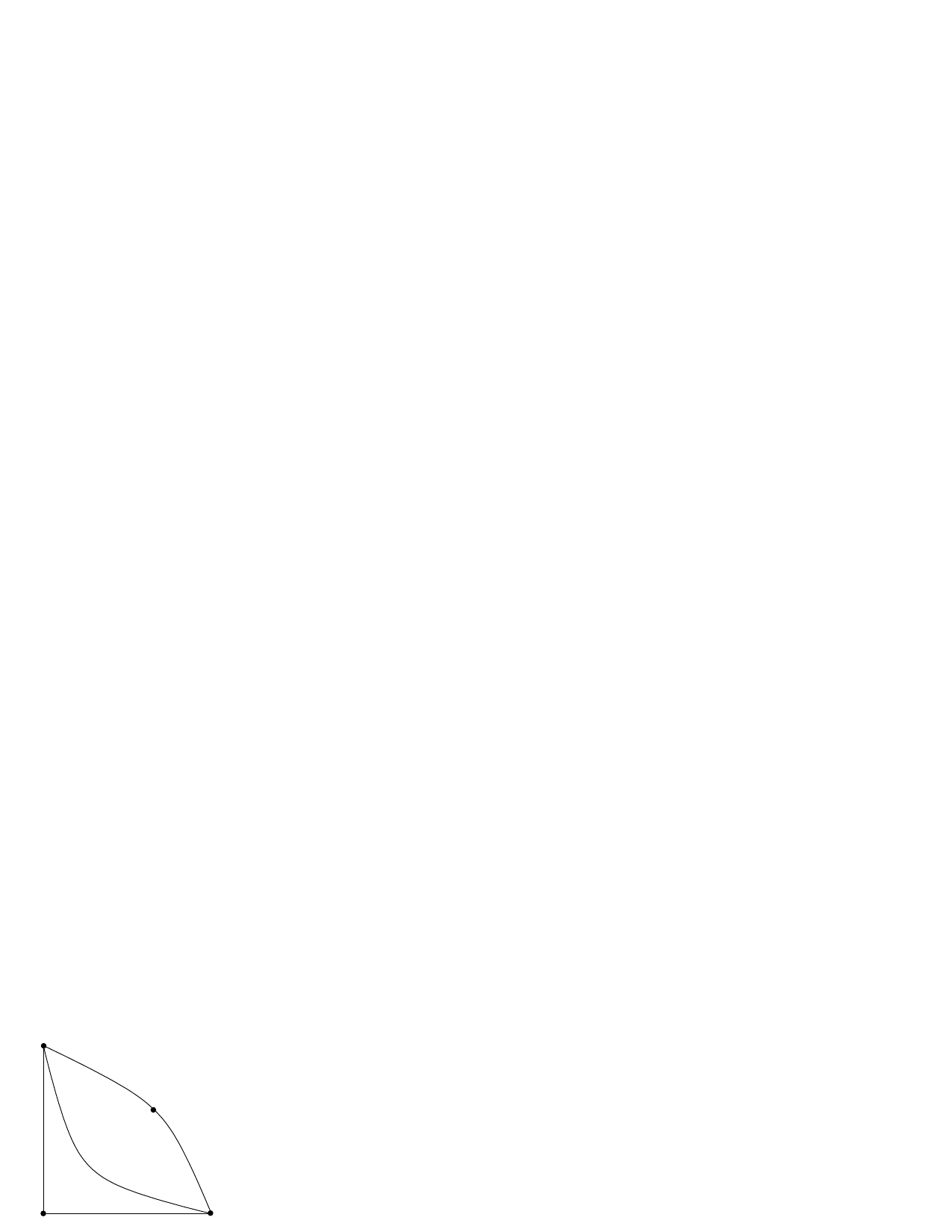}%
\end{picture}%
\setlength{\unitlength}{3947sp}%
\begingroup\makeatletter\ifx\SetFigFont\undefined%
\gdef\SetFigFont#1#2#3#4#5{%
  \reset@font\fontsize{#1}{#2pt}%
  \fontfamily{#3}\fontseries{#4}\fontshape{#5}%
  \selectfont}%
\fi\endgroup%
\begin{picture}(2355,2170)(4336,-4157)
\put(6045,-2841){\makebox(0,0)[lb]{\smash{{\SetFigFont{8}{9.6}{\familydefault}{\mddefault}{\updefault}{\color[rgb]{0,0,0}$\FIXmig_0 = \tilde{R}(\tl \Ltwo) = [1:0:1]$}%
}}}}
\put(6676,-3961){\makebox(0,0)[lb]{\smash{{\SetFigFont{8}{9.6}{\familydefault}{\mddefault}{\updefault}{\color[rgb]{0,0,0}$\CFIXmig = [1:0:0]$}%
}}}}
\put(4801,-2086){\makebox(0,0)[lb]{\smash{{\SetFigFont{8}{9.6}{\familydefault}{\mddefault}{\updefault}{\color[rgb]{0,0,0}$\CFIXmig'=[0:0:1]$}%
}}}}
\put(5198,-3269){\makebox(0,0)[lb]{\smash{{\SetFigFont{8}{9.6}{\familydefault}{\mddefault}{\updefault}{\color[rgb]{0,0,0}$\Lone$}%
}}}}
\put(4726,-4111){\makebox(0,0)[lb]{\smash{{\SetFigFont{8}{9.6}{\familydefault}{\mddefault}{\updefault}{\color[rgb]{0,0,0}$\B0 = [0:1:0]$}%
}}}}
\put(4351,-3136){\makebox(0,0)[lb]{\smash{{\SetFigFont{8}{9.6}{\familydefault}{\mddefault}{\updefault}{\color[rgb]{0,0,0}$\Rmig(\Lthree^\pm)$}%
}}}}
\put(5428,-3878){\makebox(0,0)[lb]{\smash{{\SetFigFont{8}{9.6}{\familydefault}{\mddefault}{\updefault}{\color[rgb]{0,0,0}$\Rmig(\Lfour^\pm)$}%
}}}}
\put(5551,-2461){\makebox(0,0)[lb]{\smash{{\SetFigFont{8}{9.6}{\familydefault}{\mddefault}{\updefault}{\color[rgb]{0,0,0}$\Lzero$}%
}}}}
\end{picture}%

\end{center}
\caption{Critical values locus of $R$.\label{FIG:MK_CONFIGURATION}}
\end{figure}

It will be helpful to also consider the critical locus for the lift $\tl \Rmig:
\tl \CP^2 \ra \CP^2$.  Each of the critical curves $L_i$ lifts by
proper transform (see \cite[Appendix A.2]{BLR1}) to a critical curve $\tl L_i
\subset \tl \CP^2$ for $\tl \Rmig$.
Moreover,
any critical point for $\tl \Rmig$ belongs to either one of these proper transforms or to one of
the exceptional divisors $L_\ex(\INDmig_\pm)$.

By symmetry, it is enough to consider the blow-up of $a_+$.  We saw in Part~I
that there are four critical points on the exceptional divisor
$L_\ex(\INDmig_+)$ occurring at $\chi = -1, 1, \infty,$ and $0$, where $\chi = (w+i)/(u-i)$.
They correspond to intersections of $L_\ex(\INDmig_+)$ with the
collapsing line $\tl L_2$, the  $\tl L_1$, and the critical lines
$\tl L_3^+$ and $\tl L_4^-$, respectively.

\begin{rem}\label{REM:WHITNEY}
In Part I we showed that all of the critical points of $\tl R$ except the fixed points $e,e'$,
the collapsing line $\tl \Ltwo$, and two points $\{\pm (i,i)\} = {\tl \Lthree}^\pm \cap {\tl \Lfour}^\pm$,
are degree two {\em Whitney folds}, i.e.\ they can be brought into the normal form $(x,y) \mapsto (x,y^2)$ in holomorphic coordinates.  See \cite[Appendix D.2]{BLR1} and also Lemma \ref{LEM:NORMALFORM}.
\end{rem}

\subsection{Local study of the critical locus of $R$ and $\tilde{R}$} \label{SUBSEC:VANISHIN_ORDER}

In the proof of the Equidistribution Theorem for the DHL we will need details
about how the Jacobian of $R$ vanishes, when $R$ is expressed in local
coordinates; See Lemmas \ref{LEM:FJ_ESTIMATE} and~\ref{LEM:SPECIAL_JACOBIAN}.

These details can be recovered from Formula (\ref{JAC_IN_HOMOG}) for $\Jac \hat
R$ as follows: Suppose $z \in N \subset  \CP^2 \setminus I(R)$, with $N$ an
open set admitting a local section $s: N \rightarrow \C^3 \setminus \{0\}$ of
the canonical projection $\pi: \C^3 \setminus \{0\} \rightarrow \CP^2$.  If we
express $R$ in local coordinates on $N$ (in the domain) and on $R(N)$ (in the
codomain), then the complex Jacobian of this local expression for $R$ differs
from $\Jac \hat R \circ s$ by a non-vanishing analytic function.

For any $z \in \CP^2 \setminus I(R)$ define $\mu(z,R)$\label{DEF_MU} to be the order of
vanishing at $z$ for the complex Jacobian of any local coordinate expression
for $R$.  By the chain rule, the result is independent of the choice of
charts.  Moreover, by the discussion in the previous paragraph, $\mu(z,R)$
equals the order of vanishing of $\Jac \hat R \circ s$ at~$z$.

\begin{lem}\label{LEM:MULTIPLICITIES_C2}  Let $c$ be the point of intersection between $L_0$ and $L_2$.  We have
\begin{itemize}
\item[(a)] $\mu(z,R) \leq 2$ for any $z \in \CP^2 \setminus \{\CROSSING,\CFIXmig,\CFIXmig',\INDmig_\pm\}$,
\item[(b)] $\mu(\CROSSING,R) = 3$, moreover, there are local coordinates $(x,y)$ centered at $\CROSSING$ in which $\Jac R  \asymp xy^2$, and
\item[(c)] $\mu(\CFIXmig,R) = \mu(\CFIXmig',R) = 4$.
\end{itemize}
\end{lem}

\begin{proof}
Any $z \in \CP^2 \setminus \{\pm(i,i),\CROSSING,\INDmig_\pm,\CFIXmig,\CFIXmig'\}$ is either regular
or is a smooth point of the critical locus; See
Figure~\ref{FIG:CRITICAL_CURVES}.  Since each of the irreducible factors of
$\Jac \hat R$ occurs to the first or second power, at any smooth point $z$ of the
critical locus we have $\mu(z,R) \leq 2$.

Since the lines $L_3^\pm$ and $L_4^\pm$ intersect transversally at $\pm(i,i)$,
we can choose local coordinates $(x,y)$ centered at $\pm(i,i)$ so that
$L_3^\pm$ is given by $\{x=0\}$ and $L_4^\pm$ is given by $\{y=0\}$.  As the
irreducible factors of $\Jac \hat R$ corresponding to $L_3^\pm$ and $L_4^\pm$
occur to the first power, near the origin in these coordinates we have $\Jac R \asymp xy$, giving that $\mu(\pm(i,i),R) = 2$.

Similarly, we can choose local coordinates $(x,y)$ centered at 
$\CROSSING$ with $L_0$ corresponding to $\{x=0\}$
and $L_2$ corresponding to $\{y=0\}$.  As the irreducible factors of $\Jac \hat
R$ corresponding to these two lines have have first and second powers,
respectively, in the expression for $\Jac \hat R$ we have $\Jac R \asymp xy^2$,
implying $\mu(\CROSSING,R) = 3$.

The four separate critical lines $L_0,L_1,L_3^+$ and $L_3^-$ meet at $\CFIXmig$ and $\Jac \hat R$ vanishes to order one along
each of them, so that $\mu(\CFIXmig,R) = 4$.  The same result holds at~$\CFIXmig'$, by symmetry.
\end{proof}

To deal with the indeterminate points $\INDmig_\pm$ we need the following:

\begin{lem}  \label{LEM:MULTIPLICITIES_EXCEPTIONAL}
For every $\tilde{z} \in L_\ex(\INDmig_+) \cup L_\ex(\INDmig_-)$ we have $\mu(\tilde{z},\tilde{R}) \leq 2$.
\end{lem}

\begin{proof}
By symmetry, we can focus on $\INDmig_+$.  There is a neighborhood $N$ of the
exceptional divisor $L_\ex(\INDmig_+)$ in which the critical locus of
$\tilde{R}$ consists of the proper transforms $\tl L_1$, $\tl L_2$, $\tl L_3^+$
and $\tl L_4^-$ and in which these curves are disjoint.  As these four critical
curves of $\tilde{R}$ are smooth and disjoint in $N$, the order on vanishing of
$\Jac \tilde{R}$ is locally constant on each of them.  Since each of
the defining equations for $L_1, L_3^+,$ and $L_4^-$ occur to the first power
in $\Jac \hat R$, we have $\mu(\tilde{z},\tilde{R}) = 1$ for any $\tilde{z} \in N$ that is
on  $\tl L_1$, $\tl L_3^+$ or $\tl L_4^-$.   Meanwhile, the defining equation for
$L_2$ occurs to the second power in $\Jac \hat R$, so that $\mu(\tilde{z},\tilde{R}) = 2$ for any $\tilde{z} \in N \cap \tilde L_2$.
\end{proof}


\section{Fatou and Julia sets and the measure of maximal entropy}
\label{SUBSEC:JULIA_AND_FATOU}

\subsection{Julia set}

For a rational map $R:\CP^n \ra \CP^n$, the {\em Fatou set} $F_R$ is defined to
be the maximal open set on which the iterates $\{R^m\}$ are well-defined and
form a normal family.  The complement of the Fatou set is the {\it Julia set}
$J_R$.  

If $R$ is dominant and has no collapsing varieties, 
Lemma~A.1 from \cite{BLR1} gives that $R$ is locally surjective (except at
indeterminate points), so that the Fatou set is forward invariant and
consequently, the Julia set is backward invariant.

If $R$ has indeterminate points, then according to this definition they are in
$J_R$.  In this case, $F_R$ and $J_R$ are not typically totally invariant.  One
can see this by blowing up an indeterminate point and observing that the image
of the resulting exceptional divisor typically intersects $F_R$.  Note also that 
any algebraic curve $A$ that is mapped by some iterate of $R$ to an
indeterminate point (such a curve exists iff $R$ is not algebraically stable) is in $J_R$.  

\msk 
The Migdal-Kadinoff renormalization $\Rmig$ is not locally surjective at any 
$x~\in~\Ltwo~\sm~\Lzero$.  More specifically, if $N$ is a small neighborhood of
$x$ that avoids $\Lzero$, then 
\begin{align*}
\Rmig(N) \cap \Lzero = \FIXmig_0 = \Rmig(x),
\end{align*}
where $\FIXmig_0 = [1:0:1]$,
since any point of $\Lzero \sm \{\FIXmig_0\}$ has preimages only in $\Lzero$.
However, we still have the desired invariance:

\begin{lem} The Migdal-Kadinoff renormalization $\Rmig$ has forward invariant Fatou set and, consequently, backward invariant Julia set.
\end{lem}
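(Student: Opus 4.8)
The plan is to show that $\Rmig(F) \subseteq F$, where $F = F_{\Rmig}$, despite the failure of local surjectivity along $\Ltwo \sm \Lzero$. The obstruction to the usual argument is exactly this: the standard proof that $\Rmig(F_R)\subseteq F_R$ uses that $R$ is an open map near points of $F_R$, so that a normal family of iterates on a neighborhood $\Om\ni x$ pushes forward to a normal family on the open set $\Rmig(\Om)\ni \Rmig(x)$. Here $\Rmig(\Om)$ need not be open when $x\in\Ltwo\sm\Lzero$; its image is pinched onto $\FIXmig_0\in\Lzero$. So I would split into cases according to whether $x\in F$ lies on the collapsing line $\Ltwo$ or not.

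\emph{Case 1: $x\in F$ with $x\notin \Ltwo$ (equivalently, $x\notin\Ltwo\sm\Lzero$, since $\Ltwo\cap\Lzero$ is a single point which I can handle with the other indeterminacy/separatrix structure, or absorb into Case 2).} By Lemma~\ref{LEM:DOMINANT} and the hypothesis that $\Rmig$ has no collapsing varieties away from $\Ltwo$, $\Rmig$ is locally surjective at $x$: there is a neighborhood $\Om$ of $x$ on which $\Rmig$ is defined, holomorphic, and open, with $\Rmig(\Om)$ an open neighborhood of $\Rmig(x)$. Shrinking $\Om$ we may assume the iterates $\{\Rmig^m\}$ form a normal family on $\Om$ and that $\Om$ avoids the indeterminacy set and $\Ltwo$. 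Given a sequence $\Rmig^{m_k}$ restricted to $\Rmig(\Om)$, pull back by $\Rmig$ to get $\Rmig^{m_k+1}|_\Om$, extract a locally uniformly convergent subsequence on $\Om$, and push forward: since $\Rmig:\Om\to\Rmig(\Om)$ is an open surjection with a local section near each point of $\Rmig(\Om)$ (away from critical values; and at critical values one uses that a uniform limit on $\Om$ still descends because $\Rmig$ is a proper finite branched cover of $\Rmig(\Om)$ after further shrinking), the limit descends to a convergent subsequence of $\Rmig^{m_k}$ on $\Rmig(\Om)$. Hence $\Rmig(x)\in F$.

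\emph{Case 2: $x\in F$ with $x\in\Ltwo\sm\Lzero$.} Then $\Rmig(x)=\FIXmig_0\in\Lzero$. I claim $\FIXmig_0\in F$ directly, by exhibiting a neighborhood of $\FIXmig_0$ on which the iterates are a normal family; since this will hold independently of $x$, it finishes the lemma. The point is that $\Lzero$ is the fast (strong) separatrix: by \S\ref{SUBSEC:FIXED POINTS}, $\Lzero=\{\eta=0\}$ is $\Rmig$-invariant with $\Rmig|\Lzero$ the power map $\xi\mapsto\xi^4$, and all points near $\Lzero$ are pulled toward $\Lzero$ at the super-exponential rate $r^{2^n}$. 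Concretely, $\FIXmig_0$ is a point of $\Lzero\sm\BOTTOMmig$ lying in one of the basins $\WW^s(e)$ or $\WW^s(e')$ — say $\WW^s(e)$ (by the symmetry $(U\!:\!V\!:\!W)\mapsto(W\!:\!V\!:\!U)$ the two cases are identical). So $\FIXmig_0\in\WW^s(e)$, and by Lemma~\ref{nbd of B} together with the transversal super-attraction of $\Lzero$, $\WW^s(e)$ is open and contains a whole neighborhood $\Om_0$ of $\FIXmig_0$ on which $\Rmig^n\to e$ locally uniformly. A locally uniformly convergent sequence of holomorphic maps is in particular a normal family, so $\Om_0\subseteq F$ and $\FIXmig_0\in F$.

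The two cases together give $\Rmig(F)\subseteq F$: points of $F$ off $\Ltwo$ map into $F$ by Case~1, and points of $F$ on $\Ltwo\sm\Lzero$ map to $\FIXmig_0\in F$ by Case~2 (and $F\cap(\Ltwo\cap\Lzero)$, being in a neighborhood on which iterates are normal, maps into $F$ by the same separatrix argument, as $\Lzero$ is invariant). Backward invariance of $J_{\Rmig}=\CP^2\sm F$ is then immediate by taking complements and preimages. The main obstacle is Case~1: making precise that a uniform limit of $\Rmig^{m_k+1}$ on $\Om$ descends to a uniform limit of $\Rmig^{m_k}$ on $\Rmig(\Om)$ when $\Rmig|_\Om$ has critical points — here one shrinks $\Om$ so that $\Rmig:\Om\to\Rmig(\Om)$ is a proper branched covering and argues branch-by-branch, or equivalently invokes that normality is a local property and uses local holomorphic sections of $\Rmig$ away from the (nowhere dense) critical value set, then extends across it by continuity of the limit.
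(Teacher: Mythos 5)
Your Case 1 is essentially the standard argument and it is fine (though one can streamline: once $\Rmig$ is known to be an open map near $x\in F$, normality pushes forward directly).

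The genuine gap is Case 2, which rests on a false geometric claim. You assert that $\FIXmig_0$ lies in $\Lzero\sm\BOTTOMmig$, inside one of the basins $\WW^s(e)$ or $\WW^s(e')$, and conclude $\FIXmig_0\in F$. This cannot be right on general grounds: $\FIXmig_0$ is a \emph{fixed point} of $\Rmig$, so it cannot lie in the basin of $e$ or $e'$ unless it equals one of them. Concretely, $\Rmig$ collapses $\Ltwo=\{U=-W\}$ to $\FIXmig_0=[1:0:1]$, which in the coordinates $(\xi=W/U,\eta=V/U)$ is $\xi=1$, $\eta=0$, i.e.\ a point of the unit circle $\BOTTOMmig\subset\Lzero$. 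Since $\Rmig|\Lzero$ is $\xi\mapsto\xi^4$, the point $\FIXmig_0$ is a \emph{repelling} fixed point within $\Lzero$, while $\Lzero$ is transversally superattracting; so $\FIXmig_0$ is a saddle in $\CP^2$. In particular $\FIXmig_0\in J_{\Rmig}$, and the conclusion $\FIXmig_0\in F$ is wrong.

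The paper's proof turns this exact observation into the argument you were missing: instead of trying to show $\Rmig(x)\in F$ for $x\in\Ltwo\cap F$, one shows that Case 2 \emph{never arises}, i.e.\ $\Ltwo\subset J_{\Rmig}$. For $x\in\Ltwo\sm\{\INDmig_\pm\}$ and any small neighborhood $U$ of $x$, normality of $\{\Rmig^n\}$ on $U$ would force $\Rmig(U)\subset\WW^s(\FIXmig_0)$, since $\FIXmig_0$ is a saddle with one-dimensional stable manifold; but $U$ contains regular points whose images leave any neighborhood of that stable manifold, a contradiction. With $\Ltwo\subset J_{\Rmig}$ established, your Case 1 (local surjectivity off $\Ltwo$, via Lemma~\ref{LEM:DOMINANT}) is all that remains, and it gives forward invariance of $F$ and hence backward invariance of $J_{\Rmig}$.
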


\begin{proof}
It suffices to show that $\Ltwo \subset J_\Rmig$, since $\Rmig$ is locally 
surjective at any other point, by Lemma A.1 from \cite{BLR1}.  By definition,
$\{\INDmig_\pm\} \subset J_\Rmig$, so we consider
$x~\in~\Ltwo~\sm~\{\INDmig_\pm\}$.  Let $N$ be any small neighborhood  of $x$.
Note that $\Rmig(x) = \FIXmig_0$ is a fixed point of saddle-type, with one-dimensional
stable and unstable manifolds.   Therefore, in order for the iterates to form a normal
family on $N$, we must have $\Rmig(N) \subset \WW^s(\FIXmig_0)$.  However, this
is impossible, since there are plenty of regular points for $\Rmig$ in $N$.
\end{proof}

Lemmas \ref{nbd of B}
and \ref{LEM:REGULARITY} give a clear picture of $J_\Rmig$ in a neighborhood of
the line at infinity $\Lzero$.  

\begin{prop}\label{PROP:JULIA_NEAR_LZERO}
Within some neighborhood $N$ of $\Lzero$ we have that $J_\Rmig \cap N =
\WW^s_{\C,\loc}(\BOTTOMmig) \cap N$.  Within this neighborhood, $J_\Rmig$ is a
$C^\infty$ 
$3$-dimensional manifold.
\end{prop}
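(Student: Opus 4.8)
The plan is to establish the two inclusions $J_\Rmig \cap N \subset \WW^s_{\C,\loc}(\BOTTOMmig) \cap N$ and $\WW^s_{\C,\loc}(\BOTTOMmig) \cap N \subset J_\Rmig \cap N$ for a suitably small tubular neighborhood $N$ of $\Lzero$, and then read off the manifold structure from Lemma~\ref{LEM:REGULARITY}. The starting point is Lemma~\ref{nbd of B}, which says that, after shrinking $N$, every point of $N$ lies in one of the three sets $\WW^s(e)$, $\WW^s(e')$, or $\WW^s_{\C,\loc}(\BOTTOMmig)$. The first two are contained in the Fatou set: near the superattracting fixed points $e$ and $e'$ the iterates of $\Rmig$ converge locally uniformly (this is the content of the description of the separatrices in \S\ref{SUBSEC:FIXED POINTS}, with the orbit pulled toward $\Lone$ at rate $\rho^{4^n}$ and then converging along $\Lone$ at rate $r^{2^n}$), so $\WW^s(e) \cup \WW^s(e') \subset F_\Rmig$. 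Hence $J_\Rmig \cap N \subset \WW^s_{\C,\loc}(\BOTTOMmig) \cap N$, modulo the caveat that we must also check $N$ avoids the indeterminacy points $\INDmig_\pm$ and the collapsing line $\Ltwo$ — both of which lie off $\Lzero$, so this is arranged by shrinking $N$.

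For the reverse inclusion I would show that no point of $\WW^s_{\C,\loc}(\BOTTOMmig)$ can lie in the Fatou set. Suppose $x \in \WW^s_{\C,\loc}(\BOTTOMmig)$ and let $\xis \in \BOTTOMmig$ be the point its orbit approaches. The key dynamical fact is that $\BOTTOMmig$ is the unit circle separating the two basins $\{|\xi|<1\}$ and $\{|\xi|>1\}$ inside the invariant line $\Lzero$, on which $\Rmig$ acts as $\xi \mapsto \xi^4$; thus $\xis$ is a repelling point for the restriction $\Rmig|\Lzero$, and more precisely every neighborhood of $\xis$ in $\Lzero$ contains points attracted to $e$ and points attracted to $e'$. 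Now take any neighborhood $U$ of $x$. Because $\WW^s_{\C,\loc}(\BOTTOMmig)$ is a $3$-manifold transverse to $\Lzero$ (Part~I, \S 9.2, quoted before Lemma~\ref{nbd of B}), $U$ must meet both $\WW^s(e)$ and $\WW^s(e')$: indeed the stable-manifold lamination of $\BOTTOMmig$ is pinched between the two open basins, so points just off the laminated hypersurface on either side fall into $\WW^s(e)$ or $\WW^s(e')$ respectively. Iterates of $\Rmig$ on such points converge to $e$, to $e'$, and (along the leaf through $x$) to $\xis \in \BOTTOMmig$; three distinct limit points on $U$ preclude normality, so $x \in J_\Rmig$. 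This gives $\WW^s_{\C,\loc}(\BOTTOMmig) \cap N \subset J_\Rmig \cap N$.

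Combining the two inclusions yields $J_\Rmig \cap N = \WW^s_{\C,\loc}(\BOTTOMmig) \cap N$, and Lemma~\ref{LEM:REGULARITY} (together with the $3$-manifold statement from Part~I) identifies this set as a $C^\infty$ $3$-dimensional submanifold of $\CP^2$, laminated by complex analytic discs. The step I expect to be the main obstacle is the careful geometric argument in the second inclusion: showing that an arbitrarily small neighborhood of a point on the laminated hypersurface $\WW^s_{\C,\loc}(\BOTTOMmig)$ genuinely meets \emph{both} basins $\WW^s(e)$ and $\WW^s(e')$. This requires knowing that the hypersurface locally separates $N$ into exactly the two basin pieces — which is precisely what Lemma~\ref{nbd of B} plus the transversality/lamination structure from Part~I delivers — so the proof is really an exercise in assembling those cited facts correctly, with attention to keeping $N$ small enough to exclude $\INDmig_\pm$, $\Ltwo$, and any other Julia-set contributions far from $\Lzero$.
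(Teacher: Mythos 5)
Your proof is correct, and it appropriately fills in an argument that the paper leaves implicit: the paper simply declares the proposition as a consequence of Lemmas~\ref{nbd of B} and \ref{LEM:REGULARITY} without writing out a proof, so you are doing exactly the expected work. The first inclusion (Fatou $\supset$ basins, so Julia $\subset \WW^s_{\C,\loc}(\BOTTOMmig)$) is clean. The manifold structure is correctly read off from Lemma~\ref{LEM:REGULARITY} and the lamination statement from Part~I.

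Two remarks on the second inclusion, which is the part you flag as the ``main obstacle.'' First, the phrase ``three distinct limit points on $U$ preclude normality'' needs unpacking; the actual mechanism is the identity theorem applied to a locally uniform limit of holomorphic maps. If $\Rmig^{n_k} \to g$ locally uniformly on a connected neighborhood $U$ of $x$, then $g$ is holomorphic; since $g \equiv e$ on the open nonempty set $U \cap \WW^s(e)$, we get $g \equiv e$ on all of $U$, contradicting $g(x) \in \BOTTOMmig$ (the orbit of $x$ stays in the forward-invariant neighborhood $\Om$ of $\BOTTOMmig$ by definition of $\WW^s_{\C,\loc}(\BOTTOMmig)$, which keeps it away from $e,e'$). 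Second — and this dissolves the obstacle you worried about — this argument only needs $U$ to meet \emph{one} of the two basins, not both, and that is automatic: $\WW^s_{\C,\loc}(\BOTTOMmig)$ is a $3$-dimensional submanifold of the $4$-real-dimensional $\CP^2$, hence has empty interior, so $U \sm \WW^s_{\C,\loc}(\BOTTOMmig) \neq \emptyset$ is open and (by Lemma~\ref{nbd of B}) contained in $\WW^s(e)\cup\WW^s(e')$. No separation or ``pinching'' analysis of the lamination is required. With that simplification your argument is complete.
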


Let us consider the locus  $\{h=0\}$ of vanishing magnetic field in $\CP^2$ for the DHL. 
In the affine coordinates, it is an $R$-invariant line $\LINV = \{u = w\}$;
in the physical coordinates, it is an $\Rphys$-invariant line $\LLINV=\{z=1\}$.
The two maps are conjugate by the M\"obius transformation $\LLINV\ra \LINV$, $u=1/t$. 
Dynamics of $\displaystyle{\Rphys: t\mapsto  \left( \frac {2t}{t^2+1} \right)^2}$ on $\LLINV$
was studied in \cite{BL}.
In particular, it is shown in that paper that the Fatou set for $\Rphys|\, \LLINV$ consists entirely of the
basins of attraction of the fixed points $\FIXphys_0 := \{t=0\}$ and $\FIXphys_1:=\{t=1\}$ 
which are superattracting within this line: see~Figure~\ref{FIG:INVARIANT_LINE_JULIA}.  
Under the conjugacy, the Fatou set for
$\Rmig| \LINV$ consists of the basins of attraction for the two fixed points
\begin{align*}
b_0 := \{u=\infty\} = [1:0:1] \qquad \mbox{and} \qquad b_1 := \{u=1\} = [1:1:1].
\end{align*}

\begin{prop}\label{PROP:JULIA}
$J_\Rmig = \overline{\bigcup_n \Rmig^{-n}(\LINV)}$. 
\end{prop}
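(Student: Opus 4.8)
The plan is to prove the two inclusions separately, using the dynamical structure near $\Lzero$ that has already been assembled.

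\medskip

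\textbf{The inclusion $\overline{\bigcup_n \Rmig^{-n}(\LINV)} \subset J_\Rmig$.} First I would observe that $\LINV \subset J_\Rmig$. Indeed, $\LINV = \{u=w\}$ is $\Rmig$-invariant, and on it $\Rmig$ is conjugate (via $u = 1/t$) to $\Rphys\colon t \mapsto (2t/(t^2+1))^2$, whose Julia set is a genuine (nonempty, nontrivial) Julia set of a one-dimensional rational map of degree $4$; see Figure~\ref{FIG:INVARIANT_LINE_JULIA}. If a point $x \in \LINV$ lay in $F_\Rmig$, then restricting a locally uniformly convergent subsequence of $\{\Rmig^m\}$ to $\LINV$ would force $x$ into the one-dimensional Fatou set of $\Rphys|\LLINV$, a contradiction. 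Since $J_\Rmig$ is backward invariant (by the Lemma preceding Proposition~\ref{PROP:JULIA_NEAR_LZERO}), every $\Rmig^{-n}(\LINV) \subset J_\Rmig$, and $J_\Rmig$ is closed, giving this inclusion.

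\medskip

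\textbf{The inclusion $J_\Rmig \subset \overline{\bigcup_n \Rmig^{-n}(\LINV)}$.} This is the substantive direction. Let $x \in J_\Rmig$; I want to produce preimages of $\LINV$ accumulating on $x$. The strategy is to use the superattracting/escaping structure near the line at infinity together with a Montel/normality argument. Consider the forward orbit $\{\Rmig^n x\}$. There are two cases. If the orbit eventually enters the neighborhood $N$ of $\Lzero$ from Proposition~\ref{PROP:JULIA_NEAR_LZERO} and stays there, then $x$ lies (up to pulling back) in $\WW^s_{\C,\loc}(\BOTTOMmig)$; I would then show directly, using the explicit laminated structure of $\WW^s_{\C,\loc}(\BOTTOMmig)$ by complex-analytic stable discs of points of $\BOTTOMmig$ (together with the fact that $\BOTTOMmig \subset \overline{\bigcup_n \Rmig^{-n}(\LINV)}$, which follows because $\BOTTOMmig \subset J_\Rmig \cap \Cmig$ and the one-dimensional dynamics on the Möbius band $\Cmig$ — the subject of Part~I — has $\BOTTOMmig$ in the closure of preimages of the invariant slice corresponding to $\LINV$), that preimages of $\LINV$ accumulate on $x$. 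If the orbit of $x$ does not have this behavior, the key point is the standard one for Julia sets: the iterates $\{\Rmig^m\}$ fail to be normal near $x$, so by a several-variables Montel-type theorem (M.~Green's criterion for Kobayashi hyperbolicity of complements of sufficiently many curves in $\CP^2$, which is exactly the tool invoked in \S\ref{SUBSEC:JULIA_AND_FATOU} for $\Rphys$) the forward orbits of points near $x$ cannot avoid a configuration of curves that includes (preimages of) $\LINV$ together with enough of the critical value locus; hence some iterate $\Rmig^m$ maps a neighborhood of $x$ onto a set meeting $\LINV$, so $x \in \overline{\bigcup_n \Rmig^{-n}(\LINV)}$.

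\medskip

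\textbf{Main obstacle.} The delicate step is the second inclusion in the generic case: converting non-normality at $x$ into an actual intersection of some $\Rmig^m(\text{nbhd of }x)$ with $\LINV$. For $\Rphys$ this works cleanly because $\{z=1\}$ together with a few other invariant curves form a Green-hyperbolic configuration and one can quote \cite{GREEN:PAMS,GREEN:AJM}; for $\Rmig$ one must identify the right finite collection of curves (candidates: $\LINV$, $\Lzero$, $\Lone$, and the collapsing line $\Ltwo$, or their $\correspond$-images of the physical configuration) whose complement in $\CP^2$ is Kobayashi hyperbolic and hyperbolically embedded, so that a normal family on the complement of $x$ would force the orbit of a punctured neighborhood into that complement, contradicting $x \in J_\Rmig$. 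Handling the boundary behavior near $\Lzero$ — where $\Rmig$ has indeterminacy points $\INDmig_\pm$ and is not algebraically stable for $\Rphys$ — is where the semiconjugacy $\correspond$ and the lifted map $\tl\Rmig$ must be used carefully; I expect the cleanest route is to transport the already-known statement for $\Rphys|\LLINV$ and the cylinder $\Cphys$ from Part~I via $\correspond$, and then feed it into the normality argument above.
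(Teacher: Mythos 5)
Your overall architecture (two inclusions; use M.~Green's generalizations of Montel for the hard direction) matches the paper's, but both halves have genuine gaps as written.

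\textbf{First inclusion.} Your argument only establishes that $J_{\Rphys|\LLINV}$ (the one-dimensional Julia set inside $\LINV$) is contained in $J_\Rmig$, not that all of $\LINV$ is. Restricting a convergent subsequence of $\{\Rmig^m\}$ to $\LINV$ does show $F_\Rmig \cap \LINV$ maps into the one-dimensional Fatou set of $\Rmig|\LINV$, but that set is nonempty, so no contradiction follows yet. The missing observation — which the paper uses — is that every point of the one-dimensional Fatou set lies in the basin of one of the fixed points $\FIXmig_0$ or $\FIXmig_1$, which are superattracting \emph{within} $\LINV$ but are of \emph{saddle} type in $\CP^2$. Hence $\Rmig^n$ cannot be normal in a two-dimensional neighborhood of any point of these basins either, and all of $\LINV$ lies in $J_\Rmig$.

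\textbf{Second inclusion.} You correctly flag the choice of curve configuration as the crux, but you leave it unresolved, and your candidate list ($\LINV,\Lzero,\Lone,\Ltwo$) is not the right one. The paper's configuration is $X_0=\Lzero$, $X_1=\LINV$, $X_2=\Ltwo$, the conic $X_3=\{U^2+2V^2+W^2=0\}$ (a component of $\Rmig^{-1}\LINV$), and the quartic $X_4=\{U^4+2U^2V^2+2V^4+2W^2V^2+W^4=0\}$ (a component of $\Rmig^{-1}X_3$); note $\Lone$ is absent. The quartic $X_4$ is not optional: the three lines $X_0,X_1,X_2$ happen to form a self-dual triangle with respect to the conic $X_3$, which is exactly the degenerate situation in which Green's theorem about a conic plus three lines (Theorem~\ref{conic and 3 lines}) fails to give Brody hyperbolicity — there exist lines tangent to the conic at its intersections with one $X_i$ and through the opposite vertex. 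Adding $X_4$ and checking by direct computation that it intersects each such exceptional tangent line in at least one extra point is what restores hyperbolicity. Finally, after normality fails and $\Rmig^n(N)$ meets $\bigcup X_i$, you still need a separate argument when the intersection is with $X_0=\Lzero$ (the only $X_i$ not lying over $\LINV$): the paper handles this by splitting on $\zeta\in\Lzero$ versus $\zeta\notin\Lzero$, using Lemma~\ref{nbd of B} in the first subcase, and in the second using $\Rmig^{-1}(\Lzero)=\Lzero\cup\Ltwo$ together with $\Rmig(\Ltwo)=\FIXmig_0\in\LINV$. Your proposed alternative of reducing to Part~I via $\correspond$ and using the laminated structure of $\WW^s_{\C,\loc}(\BOTTOMmig)$ is not pursued by the paper and would still require establishing $\BOTTOMmig\subset\overline{\bigcup_n\Rmig^{-n}(\LINV)}$, which is not obviously easier than the direct argument.
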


\begin{proof}
Since every point in the Fatou set of $\Rmig |\LINV$ is in the basin of attraction of either $\FIXmig_0$ or $\FIXmig_1$ and
since these points are of saddle-type in $\CP^2$, the family of iterates $\Rmig^n$ cannot be normal
in an open neighborhood of any point on $\LINV$.  Thus $\LINV \subset J_\Rmig$.
It follows that $\overline{\bigcup_n \Rmig^{-n}(\LINV)} \subset J_\Rmig$ since $J_\Rmig$ is
closed and backward invariant.

We will now show that $\bigcup_n \Rmig^{-n}(\LINV)$ is dense in $J_\Rmig$.
Consider a 
configuration of five algebraic curves
\begin{eqnarray*}
X_0 &:=& \{V = 0\}  = \mbox{the separatrix}\ \Lzero, \\
X_1 &:=& \{U = W\}  =\mbox{the invariant line}\ \LINV,  \\
X_2 &:=& \{U = -W\}  =\mbox{the collapsed line}\ \Ltwo\subset \Rmig^{-1}(\LINV), \\
X_3 &:=& \{U^2+2V^2+W^2 = 0\} =\mbox{a component of}\ \Rmig^{-1}(\LINV),\\ 
X_4 &:=& \{U^4+2U^2V^2+2V^4+2W^2V^2+W^4=0\} =\mbox{a component of}\  \Rmig^{-1}(X_3).
\end{eqnarray*}
\noindent

We will 
 use the results of M.~Green  to check that the complement of these curves,
$M:=\CP^2 \sm \bigcup_{i} X_i$, 
is a complete Kobayashi hyperbolic manifold hyperbolically embedded in $\CP^2$
(see Appendix \ref{APP:KOB}).
We will first check that $M$ is Brody hyperbolic,
i.e.,  there are no non-constant holomorphic maps $f:\C \ra M$. 
To this end we will apply Green's Theorem \ref{conic and 3 lines}. 
It implies  that the image of $f$  must lie in a line $L$ that is tangent to the conic $X_3$
at an intersection point  with $X_i$, for one of the lines $X_i$, $i=0,1,2$, and
contains the intersection point  $X_j \cap X_l$ of the other two lines.  
It is a highly degenerate situation which does not occur for a generic configuration.
However, this is exactly what happens in our case,  as
the lines $X_0,X_1,X_2$ form a self-dual triangle with respect to the conic $X_3$
(see \S \ref{self-dual triangles}).  
However, one can check by direct calculation
that the last curve, $X_4$, must intersect each of
these tangent lines $L$ in at least one point away from $X_0,\ldots,X_3$.  Since any 
holomorphic map from $\C$ to $L \sm \bigcup_{i} X_i$ must then omit $3$ points in
$L$, it must be constant.

\comment{*******************
\msk

Let us a provide the computation whose result is asserted in the above paragraph.  
We list the points of tangency $P_1,\ldots,P_6$ and the corresponding tangent lines $T_1,\ldots,T_6$ to $X_4$:
\note{We will probably comment out the following two paragraph for the final version.}
\note{We use $T_i$ to denote these lines, inconsistently with the above paragraphs, since using  $L_i$ would collide
with the notation for critical curves....}
\begin{itemize}
\item $P_1 = [1:0:i], \,\, T_1 = \{2U+2iW = 0 \}$,
\item $P_2 = [1:0:-i], \,\, T_2 = \{2U -2iW=0 \}$,
\item $P_3 = [i:1:i], \,\, T_3 = \{2iU +4V+2iW = 0 \}$,
\item $P_4 = [-i:1:-i], \,\, T_4 = \{-2iU + 4V-2iW = 0 \}$,
\item $P_5 = [i:1:-i], \,\, T_5 = \{2iU+4V-2iW = 0\}$,
\item $P_6 = [-i:1:i], \,\, T_6 = \{-2iU + 4V+2iU = 0\}$.
\end{itemize}

\noindent
Because $X_0,X_1,$ and $X_2$ form a self-dual triangle with respect to $X_3$ we have:
\begin{itemize}
\item $X_1 \cap X_2 = [0:1:0]$ lies on $T_1$ and on $T_2$,
\item $X_0 \cap X_2 = [1:0:-1]$ lies on $T_3$ and on $T_4$, and
\item $X_0 \cap X_1 = [1:0:1]$ lies on $T_5$ and on $T_6$.
\end{itemize}

\noindent
None of the three points $P_1, P_2,$
and $[0:1:0]$ lies on $X_4$, so Bezout's Theorem gives at least one intersection between $X_4$ and $T_1$
away from $P_1$ and $[0:1:0]$ and at least one intersection between $X_4$ and $T_2$ away from $P_2$ and $[0:1:0]$.

For the remaining four tangent lines $T_3,\ldots,T_6$ we have $P_3,\ldots,P_6 \in X_4$, so we appeal to direct
calculation:
\begin{itemize}
\item  $[-2+i:1:2+i], [2+i:1:-2+i] \in T_3 \cap X_4$, with neither of these 
points equal to $P_3$ or $[1:0:-1]$. 
\item  $[-2-i:1:2-i],[2-i:1:-2-i] \in T_4 \cap X_4$, with neither of these points equal to $P_4$ or $[1:0:-1]$.  
\item  $[2+i:1:2-i], [-2+i:1:-2-i] \in T_5 \cap X_4$, with neither of these points equal to $P_5$ or $[1:0:1]$.  
\item  $[2-i:1:2+i], [-2-i:1:-2+i] \in T_6 \cap X_4$, with neither of these points equal to $P_6$
or $[1:0:1]$.  
\end{itemize}

\noindent
Therefore, each of the tangent lines $T_1,\ldots,T_6$ intersects the configuration $X_0,\ldots,X_4$ in at least
three distinct points, as claimed.

\msk

*************}

So, $M$ is Brody hyperbolic.  Moreover, for each $i=0,\ldots,4$ the remaining
curves $\bigcup_{j \neq i} X_j$ intersect $X_i$ in at least three points 
so that there is no non-constant holomorphic map from $\C$ to
$X_i \sm \bigcup_{j \neq i} X_j$.  Therefore, another of Green's results (Thm.~\ref{THM:GREEN1}) applies  showing that $M$ is complete hyperbolic and
hyperbolically embedded.  It then follows from Proposition~\ref{COR:NORMAL_FAMILIES} that the family $\{\Rmig^n\}$ is normal on any open
set $N \subset \CP^2$ for which $\Rmig^n: N \ra M$ for all $n$.
\msk

Given any $\zeta \in J_\Rmig$ and any neighborhood $N$ of
$\zeta$, we'll show that some preimage $\Rmig^{-n}(\LINV)$ intersects $N$.
Since $\zeta \in J_\Rmig$, the family of iterates $\Rmig^n$ are not normal on $N$,
hence $\Rmig^n(N)$ must intersect $\bigcup_{i} X_i$ for some $n$.  If the
intersection is with $X_i$ for $i>0$ then  $R^{n+2}(N)$  intersects $\LINV$.

So, some iterate $\Rmig^n(N)$ must intersect $X_0=\Lzero$. 
Suppose first that $\zeta \in \Lzero$.  
Then,  by Lemma \ref{nbd of B},
$\zeta\subset \WW^s(\CFIXmig)\cup  \WW^s(\CFIXmig')\cup \T$.
Since the first two basins are contained in the Fatou set,
$\zeta \in \T$,  where preimages of the fixed point $b_0\in \LINV$ are dense.  

Finally, assume $\zeta\not\in \Lzero$. By shrinking $N$ if needed,
we can make it disjoint from $\Lzero$. 
Hence there is $n>0$  such that   $R^n(N)$ intersects $\Lzero$,
while $R^{n-1}(N)$ is disjoint from $\Lzero$. 
But since $R^{-1}(\Lzero)=\Lzero\cup \Ltwo$, 
we conclude that $R^{n-1}(N)$  must intersect $\Ltwo$.
But $\Ltwo$ collapses under $R$ to the fixed point $b_0 \in \LINV$.  
Hence $R^n(N)$  intersects $\LINV$.
\end{proof}

\begin{rem}
We thank the referee for pointing out that
the above proof is similar to the one that Bonifant and Dabija use to show that
if an endomorphism $f: \mathbb{CP}^2 \rightarrow \mathbb{CP}^2$ of positive degree
has an invariant elliptic curve $Q$ then any point of $Q$ has backward orbit
under $f$ that is dense in the Julia set $J_f$; see \cite[Thm. 5.4]{BD}.
\end{rem}

We will now relate $J_R$ to the Green current $\Green$.  (See Appendix
\ref{APP:COMPLEX_DYNAMICS} for the definition and basic properties of
$\Green$.) 

\begin{prop}\label{PROP:JULIA2} $J_R = \supp \Green$.
\end{prop}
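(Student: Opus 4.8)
\textbf{Proof proposal for Proposition \ref{PROP:JULIA2}.}

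The plan is to prove the two inclusions $\supp \Green \subset J_\Rmig$ and $J_\Rmig \subset \supp \Green$ separately, using for the first inclusion the standard convergence theory of the Green current and for the second inclusion the explicit description of $J_\Rmig$ obtained in Proposition \ref{PROP:JULIA}. For $\supp \Green \subset J_\Rmig$, the key fact (recalled in Appendix \ref{APP:COMPLEX_DYNAMICS}) is that $\Green = \lim \frac{1}{4^n}(\Rmig^n)^*\omega_{FS}$, and that on the Fatou set $F_\Rmig$ the iterates form a normal family; on any small ball $B \subset F_\Rmig$ the pullbacks $(\Rmig^n)^*\omega_{FS}$ have uniformly bounded local potentials, in fact the normalized potentials converge locally uniformly, so the limit current is smooth (indeed, a limit of pluriharmonic corrections) on $B$ and its restriction to a generic complex line carries no mass beyond what comes from the ambient Fubini–Study form --- more precisely, the standard argument shows $\Green|_{F_\Rmig}$ is a smooth closed positive $(1,1)$-form with continuous potential, but what we actually need is just that $F_\Rmig \cap \supp\Green = \emptyset$, which follows because on $B$ the masses $\frac{1}{4^n}\int_B (\Rmig^n)^*\omega_{FS} \wedge \omega_{FS}$ tend to zero since $\Rmig^n|_B$ is eventually an equicontinuous family mapping into a fixed compact region --- I would cite the analogous statement from \cite{FS} or \cite{S_PANORAME} rather than reprove it.

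For the reverse inclusion $J_\Rmig \subset \supp \Green$, I would use Proposition \ref{PROP:JULIA}, which identifies $J_\Rmig = \overline{\bigcup_n \Rmig^{-n}(\LINV)}$. Since $\supp\Green$ is closed, it suffices to show $\Rmig^{-n}(\LINV) \subset \supp\Green$ for every $n$, and since $\Green$ is totally invariant in the sense $\Rmig^*\Green = 4\,\Green$ (so that $\supp\Green$ is backward invariant under $\Rmig$, using that $\Rmig$ is algebraically stable and has no collapsing subvariety mapping into an indeterminacy point other than the already-understood $\Ltwo$), it is enough to show $\LINV \subset \supp\Green$. For this I would argue that the restriction of the Green current to the invariant line $\LINV$ dominates (a positive multiple of) the measure of maximal entropy $\nu$ of the one-dimensional map $\Rmig|_\LINV$, whose support is the one-dimensional Julia set shown in Figure \ref{FIG:INVARIANT_LINE_JULIA}, and in particular is all of $\LINV$ only where that Julia set is --- wait: more carefully, since $\LINV$ is $\Rmig$-invariant and $\Rmig|_\LINV$ has degree $4$, pulling back the normalizing current and restricting gives $\frac{1}{4^n}(\Rmig^n)^*\omega_{FS}\big|_\LINV \to \Green|_\LINV$, and the left side, being a sequence of positive measures on $\LINV$ of the form $\frac{1}{4^n}(\Rmig|_\LINV)^*(\omega_{FS}|_\LINV)$ with a rational map of degree $4$, converges to the measure of maximal entropy of $\Rmig|_\LINV$ (Brolin--Lyubich, \cite{BROLIN,LYUBICH:MAX ENT}). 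Hence $\Green|_\LINV = \nu \not\equiv 0$, so $\supp\Green \cap \LINV \supset \supp\nu = J(\Rmig|_\LINV)$.

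This last point is the main obstacle: $J(\Rmig|_\LINV)$ is generally a proper subset of $\LINV$, so I do not directly get all of $\LINV \subset \supp\Green$, only its one-dimensional Julia set. To upgrade this, I would invoke backward invariance again together with the fact that $\overline{\bigcup_n \Rmig^{-n}(J(\Rmig|_\LINV))}$ already contains $\overline{\bigcup_n \Rmig^{-n}(\LINV)}$: indeed any point $p \in \LINV$ is not in the Fatou set (by the argument in the proof of Proposition \ref{PROP:JULIA}, since $\FIXmig_0,\FIXmig_1$ are saddles in $\CP^2$), so $p \in J_\Rmig$, and by the density statement in Proposition \ref{PROP:JULIA} every neighborhood of $p$ meets $\Rmig^{-n}(\LINV)$ for some $n$; but it is cleaner to observe that the basins $\WW^s(\FIXmig_0), \WW^s(\FIXmig_1)$ inside $\LINV$, while Fatou for $\Rmig|_\LINV$, consist of points whose full $\CP^2$-orbit accumulates on the saddle points $\FIXmig_i$, and a neighborhood of such a saddle contains points of $\Rmig^{-m}(\LINV)$ landing near the one-dimensional Julia set under further iteration --- equivalently, the $\Rmig$-preimages of the one-dimensional Julia set accumulate on all of $\LINV$ because $\Rmig|_\LINV$ has Julia set with nonempty interior-free complement consisting only of the two basins, each of which is in the closure of preimages of $J(\Rmig|_\LINV)$ under the two-dimensional dynamics via the transverse saddle directions. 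So the cleanest writeup: show $\supp\Green$ is closed, backward invariant, and meets $\LINV$; deduce it contains $\overline{\bigcup_n\Rmig^{-n}(\LINV\cap\supp\Green)}$; check $\LINV \subset \overline{\bigcup_n \Rmig^{-n}(J(\Rmig|_\LINV))}$ using the saddle structure at $\FIXmig_0,\FIXmig_1$; conclude $J_\Rmig = \overline{\bigcup_n\Rmig^{-n}(\LINV)} \subset \supp\Green$, and combine with the first inclusion.
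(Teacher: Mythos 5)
Your overall skeleton (the easy inclusion $\supp\Green\subset J_\Rmig$ from the standard theory, the hard inclusion reduced via backward invariance to $\LINV\subset\supp\Green$) matches the paper. But the core of the hard direction has a genuine gap, and you miss the central difficulty that the paper has to work around.

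The paper's proof uses Sibony's result (Theorem~\ref{Green current vs Fatou}), which only gives $J_\Rmig\cap\Regular\subset\supp\Green$ on the set $\Regular$ of \emph{regular} points. The whole subtlety is that the two basins inside $\LINV$ behave oppositely with respect to regularity: $\WW^s(\FIXmig_0)$ lies in $\Regular$ (its orbits drift to the line at infinity $\Lzero$, staying away from $\INDmig_\pm$), so Sibony's theorem applies directly and, taking closure, one gets $J_{\Rmig|\LINV}\subset\supp\Green$; but \emph{every} point of $\WW^s(\FIXmig_1)$ is irregular, since $\FIXmig_1\in\overline{\bigcup_n\Rmig^{-n}\{\INDmig_\pm\}}$, so Sibony's theorem says nothing there. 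Your proposal never engages with this dichotomy. The paper then establishes $\Ltwo\subset\supp\Green$ (again via regularity of $\Ltwo\sm\{\INDmig_\pm\}$ plus closedness) and uses the Dynamical $\lambda$-Lemma: appropriate backward branches of $\Rmig^n$ carry a disc $D_2\subset\Ltwo$ centered at $\INDmig_+$ to discs transverse to $\Lone$ at points approaching $\FIXmig_1$, and these converge to $\WW^s_0(\FIXmig_1)\subset\LINV$. Backward invariance and closedness of $\supp\Green$ then give $\WW^s(\FIXmig_1)\subset\supp\Green$.

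Two specific steps of your sketch do not hold up. First, the restriction argument $\frac{1}{4^n}(\Rmig^n)^*\omega_{FS}\big|_\LINV\to\Green\big|_\LINV$ is not a consequence of weak convergence of currents in $\CP^2$; slicing a sequence of closed positive $(1,1)$-currents by a fixed hypersurface is not a continuous operation, and here the potentials of $\Green$ are not known to be continuous along $\LINV$ (one would have to verify that $G|_\LINV\not\equiv-\infty$ and control the slice carefully). Second, and more seriously, the claim $\LINV\subset\overline{\bigcup_n\Rmig^{-n}(J(\Rmig|_\LINV))}$ is unproved and a priori doubtful: within $\LINV$ the backward orbit of $J(\Rmig|_\LINV)$ stays in $J(\Rmig|_\LINV)$, so you need the transverse preimages in $\CP^2$ to accumulate on both one-dimensional basins, and your appeal to ``the transverse saddle directions'' is exactly the $\lambda$-lemma argument that you never carry out. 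What ultimately makes the paper's version work is that it feeds the $\lambda$-lemma a curve ($\Ltwo$) that is already known to lie in $\supp\Green$ and that is attached to $\Lone$ at the indeterminacy point $\INDmig_+$; replacing $\Ltwo$ by $J(\Rmig|_\LINV)$ would require a separate transversality/accumulation statement that you have not supplied.
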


\begin{proof}

The inclusion $\supp \Green \subset J_R$ follows immediately from Theorem~\ref{Green current vs Fatou}.  We will use Proposition~\ref{PROP:JULIA} to
show that $J_R \subset \supp \Green$.  Since $\supp \Green$ is a backward
invariant closed set, it is sufficient for us to show that $\LINV \subset \supp
\Green$.

Note that $\LINV = \WW^s(\FIXmig_0) \cup \WW^s(\FIXmig_1) \cup
J_{\Rmig|\LINV}$.  The basin $\WW^s(\FIXmig_0)$ is open and contained within the
normal set for $\Rmig$ (see Appendix \ref{APP:COMPLEX_DYNAMICS} for the
definition of normal set).  Therefore, $\WW^s(\FIXmig_0) \subset J_R \cap
\Regular \subset \supp \Green$, by Theorem~\ref{Green current vs Fatou}.  Since
$\supp \Green$ is closed, we also have that $J_{\Rmig|\LINV} \subset \supp
\Green$.

Consider the ``top'' unit circle $\TOPmig= \{(u,\bar u): \ |u|=1\}$
and note that $R|\TOPmig$ is the squaring map $u \mapsto u^2$.  Since
the fixed point $\FIXmig_1$ and indeterminate
points $\INDmig_\pm$ are on $\TOPmig$, we therefore have
\begin{eqnarray*}
\FIXmig_1 \in \overline{\bigcup_{n \geq 0} \Rmig^{-n} \{\INDmig_\pm\}},
\end{eqnarray*}
implying that none of the points of $\WW^s(\FIXmig_1)$ are normal.
Hence we cannot directly use Theorem~\ref{Green current vs Fatou} to
conclude that $\WW^s(\FIXmig_1) \subset \supp \Green.$ 

Notice that the points of $\Ltwo \sm  \{\INDmig_\pm\}$ are normal, since they
are in $\WW^s(\Lzero)$.  Theorem~\ref{Green current vs Fatou} gives that $\Ltwo
\sm  \{\INDmig_\pm\} \subset \supp \Green$, since $\Ltwo \subset J_\Rmig$.
Because $\supp \Green$ is closed, $\Ltwo \subset \supp \Green$.  Let $D_2
\subset \Ltwo$ be a small disc centered around $\INDmig_+$.  Preimages of $D_2$
under appropriate branches of $\Rmig^{n}$ will give discs intersecting $\Lone$
transversally at a sequence of points converging to $\FIXmig_1$.
By the Dynamical $\lambda$-Lemma (see \cite[pp.  80-84]{PDM}), this
sequence of discs will converge to $\WW^s_0(\FIXmig_1) \subset \LINV$, where
$\WW^s_0(\FIXmig_1)$ is the immediate basin of $\FIXmig_1$.  Since each of the
discs is in $\supp \Green$, and the latter is closed, we find that
$\WW^s_0(\FIXmig_1) \subset \supp \Green$.  Further preimages show that all of
$\WW^s(\FIXmig_1) \subset \supp \Green$.

\comment{*****************************************************
An alternative approach is to show that 
\begin{eqnarray*}
J_R = \overline{\bigcup_{n \geq 0} \Rmig^{-n} \Ltwo}.
\end{eqnarray*}
However, computations using M. Green's theorems seem tricky.
************************}

\end{proof}

\subsection{Fatou Set}
\label{SUBSEC:SOLID_CYLINDERS}

Because $J_\Rmig = \supp \Green$, we immediately have:

\begin{cor}\label{COR:PSEUDO}The Fatou set of $\Rmig$ is pseudoconvex.
\end{cor}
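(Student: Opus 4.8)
The statement to prove is the Corollary: \emph{The Fatou set of $\Rmig$ is pseudoconvex.}

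The plan is to deduce this directly from the preceding Proposition \ref{PROP:JULIA2}, which identifies the Julia set $J_\Rmig$ with $\supp\Green$, the support of the Green current of $\Rmig$. The key point is that $\Green$ is a closed positive $(1,1)$-current possessing a global plurisubharmonic potential: locally on $\CP^2$ (away from the indeterminacy set, and in fact extending across it, as discussed in Appendix \ref{APP:COMPLEX_DYNAMICS}) one has $\Green = \De_p G$ for a plurisubharmonic function $G$, obtained as the uniform limit of the normalized potentials $\tfrac{1}{4^n}\log\|\hat\Rmig^n\|$. The Fatou set is then $F_\Rmig = \CP^2 \sm \supp\Green$.

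First I would recall that for a closed positive $(1,1)$-current $T$ with local psh potential $\varphi$, the complement of its support is a locally pseudoconvex (hence, in $\CP^2$, pseudoconvex) open set: on each coordinate chart, $\supp T = \supp(dd^c\varphi)$ is exactly the set where $\varphi$ fails to be pluriharmonic, and its complement $\{x : \varphi \text{ is pluriharmonic near } x\}$ is the complement of the support of a positive measure-like object — more precisely, one uses that $-\log \dist(\cdot, \supp T)$ is plurisubharmonic near $\supp T$, or equivalently that $\supp T$ is a \emph{complete pluripolar-free} obstruction in the sense that its complement admits a psh exhaustion. The cleanest route: the function $G$ is psh on a neighborhood of $\supp\Green$ and pluriharmonic precisely on $F_\Rmig$; a standard fact (see e.g. the potential-theoretic background in Appendix \ref{APP:COMPLEX GEOM}) is that the complement of the support of a closed positive $(1,1)$-current on a projective manifold is pseudoconvex, because locally such a support is the non-pluriharmonic locus of a psh function, and such sets have pseudoconvex complements. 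Then I would simply invoke $F_\Rmig = \CP^2 \sm \supp\Green$ from Proposition \ref{PROP:JULIA2}.

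Alternatively — and this is probably the slicker argument to actually write — I would use the forward invariance of the Fatou set together with the functional equation $\Rmig^*\Green = 4\,\Green$: the potential $G$ satisfies $G\circ\Rmig = 4G + (\text{pluriharmonic})$ near generic points, but more robustly one argues that $\tfrac{1}{4^n}\FoverT_n$ converges to a psh function $G$ on $\CP^2$ (this is essentially the content to be proved in \S\ref{SEC:POTENTIAL}, so I would instead rely only on the already-established Appendix material on $\Green$), and $G$ is pluriharmonic exactly on $F_\Rmig$. Since $\{G \text{ pluriharmonic}\}$ is the complement of $\supp(\De_p G) = \supp\Green$, and the complement of the support of a closed positive current is pseudoconvex, we are done.

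The main obstacle — really the only content beyond bookkeeping — is confirming that the complement of the support of a closed positive $(1,1)$-current on $\CP^2$ is pseudoconvex; this is where one must be slightly careful, since it is \emph{not} true that the complement of an arbitrary closed set is pseudoconvex, and one genuinely uses the psh potential. I would handle this by citing the appropriate statement from Appendix \ref{APP:COMPLEX GEOM} (or the standard references \cite{DERHAM,LELONG} on currents and their pluripotentials): locally $\supp\Green$ is the set where a psh function $G$ is not pluriharmonic, and on the open set $\Om$ where $G$ is pluriharmonic one can write $G = \Re F$ for a local holomorphic $F$, whence $\Om$ carries the local plurisubharmonic exhaustion needed; combined with the fact (Hartogs/Oka, valid since $\CP^2 \sm \Lzero$-type charts are Stein-friendly) that locally pseudoconvex implies pseudoconvex in this setting, the conclusion follows. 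So the proof reduces to one sentence invoking Proposition \ref{PROP:JULIA2} and one sentence invoking the general pseudoconvexity-of-current-complements fact.

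\begin{proof}
By Proposition \ref{PROP:JULIA2}, $J_\Rmig = \supp\Green$, so the Fatou set is $F_\Rmig = \CP^2 \sm \supp\Green$. The Green current $\Green$ is a closed positive $(1,1)$-current admitting, in local charts, a plurisubharmonic potential $G$ with $\Green = \De_p G$ (see Appendix \ref{APP:COMPLEX_DYNAMICS}); thus $\supp\Green$ is locally the set on which $G$ fails to be pluriharmonic, and $F_\Rmig$ is locally the set on which $G$ \emph{is} pluriharmonic. On such an open piece $\Om$, writing $G = \Re F$ for a local holomorphic $F$ shows that $\Om$ is locally pseudoconvex; as this holds in every chart, $F_\Rmig$ is locally pseudoconvex, hence pseudoconvex. (Equivalently, $-\log\dist(\cdot,\supp\Green)$ is plurisubharmonic near $\supp\Green$, which is exactly the statement that $F_\Rmig$ is pseudoconvex.)
\end{proof}
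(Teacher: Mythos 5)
Your overall strategy is exactly the paper's: by Proposition \ref{PROP:JULIA2}, $F_\Rmig = \CP^2 \sm \supp\Green$, and then one invokes the general fact that the complement in $\CP^2$ of the support of a closed positive $(1,1)$-current is pseudoconvex. The paper simply cites this fact (to \cite{CEGRELL} and \cite{Ueda}) and stops.

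Where your write-up goes wrong is in the attempted proof of that general fact. The step \emph{``On such an open piece $\Om$, writing $G = \Re F$ for a local holomorphic $F$ shows that $\Om$ is locally pseudoconvex''} is not a valid argument. Pluriharmonicity of $G$ on an open set $\Om$ tells you nothing about the pseudoconvexity of $\Om$: pseudoconvexity is a \emph{boundary} condition, and the existence of a pluriharmonic function on the interior of $\Om$ places no constraint on the shape of $\partial\Om$. (For instance, a bidisc with a smaller bidisc deleted from its center carries plenty of pluriharmonic functions, but is not pseudoconvex.) Your parenthetical remark that $-\log\dist(\cdot, \supp\Green)$ being psh near $\supp\Green$ is ``equivalent'' to pseudoconvexity is correct as a definition, but that psh-ness is precisely the nontrivial content of the cited theorem, and your argument does not establish it. You would need the actual structure of the proof in Cegrell or Ueda — roughly, that if the complement were not pseudoconvex one could holomorphically extend $G$ across a piece of $\supp\Green$, contradicting the fact that the current has positive mass there. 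The cleanest fix is to do what the paper does and just cite \cite[Theorem 6.2]{CEGRELL} or \cite[Lemma 2.4]{Ueda} rather than sketch a proof.
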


For the definition of pseudoconvexity, see \cite{KRANTZ}.

\begin{proof}
It is well-known that in the complement in $\CP^2$ of the support of a closed positive $(1,1)$-current 
is pseudoconvex.  See \cite[Thm. 6.2]{CEGRELL} or 
\cite[Lem. 2.4]{Ueda}.
\end{proof}

\begin{rem}We thank the referee for pointing out that
Corollary \ref{COR:PSEUDO} can also be obtained directly from Proposition~\ref{PROP:JULIA}.  Suppose that $\LINV = \{l_{\rm inv}~=~0\}$ and  $\Lzero =
\{l_0~=~0\}$.  For any $N \geq 0$ one can define  a holomorphic function which
does not extend to $\bigcup_{n=0}^N R^{-n}\LINV$ by
\begin{eqnarray*}
z \mapsto \frac{\left(l_0(\hat z)\right)^k}{\left(\prod_{n=0}^N l_{\rm inv} \circ \hat \Rmig^n(\hat z)\right)}, \text{   where  } \pi(\hat z) = z \text{  and  } k = \sum_{n=0}^N 4^n.
\end{eqnarray*}
Therefore, 
the Fatou set of $\Rmig$ is the interior of the intersection of
domains of holomorphy, so it is also a domain of holomorphy.  Hence, it is
pseudoconvex.  \end{rem}

Computer experiments indicate that the Fatou set of $\Rmig$ may be precisely the
union of the basins of attraction  $\WW^s(\CFIXmig)$ and $\WW^s(\CFIXmig')$ for the two
superattracting fixed points $\CFIXmig$ and $\CFIXmig'$.  See Problem \ref{PROB:GLOBAL_BASINS}.

We can prove the following more modest statement.
Consider the solid cylinders\footnote{They correspond to actual solid cylinders in the $(z,t)$ coords; see Cor. \ref{THM:SOLID_CYLINDERS}.}
\begin{eqnarray*}
\Solidmig &:=& \left\{[U:V:W] \,\,:\,\, \frac{V^2}{UW} \in [0,1]\,\, \mbox{and}\,\,\left|\frac{W}{U} \right| < 1\right\} \mbox{  and  }\\
\Solidmig' &:=& \left\{[U:V:W] \,\,:\,\, \frac{V^2}{UW} \in [0,1]\,\,\mbox{and}\,\, \left| \frac{W}{U} \right| > 1\right\}.  
\end{eqnarray*}
\noindent

\begin{thm}\label{PROP:SOLID_CYLINDERS2}
For the mapping $\Rmig$ we have $\Solidmig \subset \WW^s(\CFIXmig)$ and $\Solidmig' \subset \WW^s(\CFIXmig')$.
\end{thm}

\noindent
In the proof, we will need to use an important property of $\Rmig: \Cmig \ra
\Cmig$ that was proved in Part I.  Recall from \S \ref{SUBSEC:SEMICONJUGACY} that $\Cmig = \correspond(\Cphys)$ is the invariant real M\"obius
band and that $\Cmigbl = \Cmig \sm \BOTTOMmig$ is the topological annulus obtained by removing the ``core curve'' $\BOTTOMmig$.

The key property is:
\begin{itemize}
\item[(P9$'$)] Every proper path $\gamma$ in $\Cmigbl$ lifts under $\Rmig$
               to at least $4$ proper paths in $\Cmigbl$.
               If $\gamma$ crosses $\Imig$ at a single point, then $\Rmig^{-1}\gamma=\cup\, \de_i$.
\end{itemize}

\begin{proof}[Proof of Theorem~\ref{PROP:SOLID_CYLINDERS2}:]
It suffices to prove the proposition for $\Solidmig$, since the statement for $\Solidmig'$ follows from the symmetry $\rho$.

We will decompose $\Solidmig$ as a union of complex discs 
and show that each disc is in $\WW^s(\CFIXmig)$. 
Let 
$$\Secmig_c := \left\{[U:V:W] \,\,:\,\, \frac{V^2}{UW} = c \in [0,1]\right\},$$ and 
$$\Secmig_c^* := \left\{[U:V:W] \,\,:\,\, \frac{V^2}{UW} = c \in [0,1] \,\, \mbox{and}\,\,\left| \frac{W}{U} \right| < 1\right\},$$ 
\noindent
so that $\Solidmig = \bigcup_{c \in [0,1]} \Secmig_c^*$.  

The discs $\Secmig_0^*$ and $\Secmig_1^*$ are in $\WW^s(\CFIXmig)$ because they are each within
the forward invariant critical curves $L_0$ and $L_1$, respectively, on which
the dynamics is given by $(W/U) \rightarrow (W/U)^4$ and $(W/U) \rightarrow (W/U)^2$, respectively.

\ssk 
We now show that for any $c \in (0,1)$ we also have $\Secmig_c^* \subset
\WW^s(\CFIXmig)$.  In fact $\CFIXmig \in \Secmig_c^*$, so it suffices to
show that $\Rmig^n$ forms a normal family on $\Secmig_c^*$.  Consider any $x
\in \Secmig_c^*$.  If $x = \CFIXmig$, then 
$x \in \WW^s(\CFIXmig)$ so that $\Rmig^n$ is normal on some neighborhood of $x$ in $\Secmig_c^*$.

Now consider any $x \in \Secmig_c^* \sm \{\CFIXmig\}$.  There is a neighborhood of  
$N \subset \Secmig_c^*$ of $x$ with $\CFIXmig \not \in N$, on which we will show that $\Rmig^n$
forms a normal family.
Recall the family of curves $X_0,\ldots,X_4$ from the proof of
Proposition~\ref{PROP:JULIA}, where we showed that $\CP^2 \sm \bigcup_{i} X_i$
is complete hyperbolic and hyperbolically embedded.  We
will show for every $n$ that $\Rmig^n(N)$ is in $\CP^2 \sm \bigcup_{i} X_i$, so that $\Rmig^n$
is normal on $N$. 

Since $\Secmig_c^* \cap X_0 = \{\CFIXmig\}$, and $\CFIXmig \not \in N$, we have that $N
\cap X_0 = \emptyset.$   Therefore, by reasoning identical to that in the proof of Proposition~\ref{PROP:JULIA}, if $\Rmig^n(N)$ intersects $X_i$ for any $i=0,\ldots,4$ we must have
that some iterate $\Rmig^m(N)$ intersects $X_1 = \LINV$.

We will check that forward iterates of $\Rmig^n(\Secmig_c^*)$ are disjoint from 
$\LINV$, which is sufficient since $N \subset \Secmig_c^*$.
The line $\LINV$ intersects the invariant annulus $\Cmigbl$ in two properly
embedded radial curves, so Property (P9') gives that $(\Rmig^n)^* \LINV$
intersects $\Cmig$ in at least $2\cdot 4^n$ properly embedded radial curves.  

One can check that $\Secmig_c$ intersects the invariant annulus
$\Cmig$ in the horizontal curve 
$$\left\{[U:V:W] \,\,:\,\, \frac{V^2}{UW} = c \in [0,1] \,\, \mbox{and}\,\,\left| \frac{W}{U} \right| = 1\right\},$$
 which corresponds to $|u| = 1/\sqrt{c} > 1$ in the $u$ coordinate for $\Cmig$.
Therefore, the $2\cdot 4^n$ radial curves in $\Cmig$
from $(\Rmig^n)^* \LINV$ intersect $\Secmig_c$ in at least $2\cdot4^n$ distinct points
within $\Cmig$.

We will now show that these are the only intersection points between
$(\Rmig^n)^* \LINV$ and $\Secmig_c$ in all of $\mathbb{CP}^2$.  Since $\Rmig$
is algebraically stable, Bezout's Theorem gives \hbox{${\rm deg}(\Secmig_c)
\cdot {\rm deg}((\Rmig^n)^* \LINV)=2\cdot4^n$} intersection points, counted
with multiplicities, in all of $\mathbb{CP}^2$.  Therefore $\Secmig_c \cap
(\Rmig^n)^* \LINV~\subset~\Cmig$. 

Since $\Secmig_c^* \subset \Secmig_c$ with $\Secmig_c^* \cap \Cmig = \emptyset$, we conclude
that $\Secmig_c^* \cap (\Rmig^n)^* \LINV = \emptyset$ for ever $n$.
In other words, $\Rmig^n(\Secmig_c^*) \cap \LINV = \emptyset$ for ever $n$.
 Thus, the same holds for $N \subset \Secmig_c^*$, implying that
$\Rmig^n$ is a normal family on $N$.  
\end{proof}

Theorem~\ref{PROP:SOLID_CYLINDERS2} has an interesting consequence for $\Rphys$.
The fixed point $\CFIXmig'$ for $\Rmig$ has a single preimage $\CFIXphys'=\correspond^{-1}(\CFIXmig')$,
which is a superattracting fixed point for $\Rphys$.  However, $\CFIXmig$ has the entire collapsing line $Z=0$
as preimage under $\correspond$.  Within this line is another superattracting fixed point $\CFIXphys = [0:1:1]$ for $\Rphys$ and every point
in $\{Z=0\} \sm \{\B0,\gamma\}$ is collapsed by $\Rphys$ to $\CFIXphys$.

We obtain:

\begin{cor}\label{THM:SOLID_CYLINDERS}For the mapping $\Rphys$, the solid cylinder
$\{(z,t) : \vert z \vert < 1, t \in (0,1]\}$ is in $\WW^s(\CFIXphys)$ and,
symmetrically, the solid cylinder $\{(z,t) : \vert z \vert >1, t \in [0,1] \}$
is in $\WW^s(\CFIXphys')$.
\end{cor}

Notice that we had to omit the ``bottom'', $t=0$, of the solid cylinder in $\WW^s(\CFIXphys)$ because points on it are forward asymptotic
to the indeterminate point $\B0$.

\subsection{Measure of Maximal Entropy}
\label{SUBSEC:MEAS_MAX_ENT}

There is a conjecture specifying the expected ergodic properties of a dominant
rational map of a projective manifold\footnote{It is stated more generally in \cite{GUEDJ_ERGODIC},
for meromorphic maps of compact K\"ahler manifolds.} in terms
of the relationship between various {\em dynamical degrees} of the map; see
\cite{GUEDJ_ERGODIC}.

Since the Migdal-Kadanoff renormalization $\Rmig$ is an algebraically stable map of
$\CP^2$, there are only two relevant dynamical degrees, the topological degree
$\deg_{top} \Rmig$ and the algebraic degree $\deg \Rmig$, which satisfy
\begin{eqnarray*}
\deg_{top} R = 8 > 4 = \deg R.
\end{eqnarray*}
This case of {\em high topological degree} was studied by Guedj
\cite{GUEDJ_LARGE_DEGREE}, who made use of a bound on topological entropy obtained by
Dinh and Sibony \cite{DS_ENTROPY}.  In our situation, his results give

\begin{prop}
$\Rmig$ has a unique measure $\nu$ of maximal entropy $\log 8$ with the following properties
\begin{itemize}
\item[(i)] $\nu$ is mixing;
\item[(ii)] The Lyapunov exponents of $\nu$ are bounded below by $\log \sqrt 2$;
\item[(iii)] If $\theta$ is any probability measure that does not charge the postcritical set\footnote{Here, the postcritical set is defined without taking the closure.} of $\Rmig$, then
$8^{-n} (\Rmig^n)^* \theta \ra \nu$;
\item[(iv)] If $P_n$ is the set of repelling periodic points of $\Rmig$ of period $n$ 
then
$$8^{-n} \sum_{a \in P_n} \delta_a \ra \nu.$$ 
 {\rm (}In fact, it suffices to take just the repelling periodic points in  $\supp \nu$.{\rm )}
\end{itemize}
\end{prop}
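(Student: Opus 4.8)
The plan is to recognize this statement as an instance of Guedj's structure theorem for rational self-maps of projective space of \emph{large topological degree} \cite{GUEDJ_LARGE_DEGREE}, and to verify that $\Rmig$ meets its hypotheses. As recorded above, $\Rmig$ is a dominant, algebraically stable self-map of $\CP^2$ whose only relevant dynamical degrees are $\deg_{top}\Rmig = 8$ and $\deg\Rmig = 4$, and it has large topological degree precisely because $8 > 4$. These two facts are exactly what Guedj's theory takes as input, so the bulk of the proof is a matter of quoting the consequences of that theory and confirming that nothing in our setup is excluded.

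First I would produce the invariant measure. Fixing a smooth probability measure $\theta$ on $\CP^2$ (for instance the Fubini--Study volume) and setting $\nu_n := 8^{-n}(\Rmig^n)^*\theta$, Guedj's theorem gives weak convergence $\nu_n \to \nu$ to a measure independent of the choice of $\theta$, valid for every $\theta$ that puts no mass on the exceptional pluripolar set of the construction, which in our situation is contained in the postcritical set of $\Rmig$; this is property (iii). Running the same argument with point masses and invoking Guedj's companion results yields mixing of $\nu$ (property (i)) and the equidistribution $8^{-n}\sum_{a\in P_n}\delta_a\to\nu$ of repelling periodic points of period $n$ lying in $\supp\nu$ (property (iv)). The entropy is then pinned down by combining two bounds: the Dinh--Sibony inequality $h_{top}(\Rmig)\le\log\deg_{top}\Rmig=\log 8$ from \cite{DS_ENTROPY}, and Guedj's lower bound $h_\nu(\Rmig)\ge\log 8$; by the variational principle these force $h_{top}(\Rmig)=\log 8$ and make $\nu$ the unique measure of maximal entropy. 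Finally, property (ii) is the specialization to our case of Guedj's general estimate that every Lyapunov exponent of $\nu$ is at least $\frac{1}{2}\log(\deg_{top}\Rmig/\deg\Rmig)=\frac{1}{2}\log 2=\log\sqrt 2$.

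The part that needs genuine care — and the only real obstacle — is checking that $\Rmig$ is not among the degenerate maps to which Guedj's theorems do not apply, and that the features peculiar to $\Rmig$, namely the collapsing line $\Ltwo$ and the two indeterminacy points $\INDmig_\pm$, cause no difficulty. Concretely, one must confirm that the relevant exceptional set (an analytic set together with its forward orbit, enlarged by $\Ltwo$ because of the collapsing) is pluripolar and hence negligible for generic $\theta$. Since the critical locus and critical value locus of $\Rmig$ were computed explicitly in \S\ref{SUBSEC:CRITICAL_LOCUS} (six lines and a conic, with images as in Figure~\ref{FIG:MK_CONFIGURATION}), this reduces to a short bookkeeping check rather than a new argument, and it is the only place where anything specific to $\Rmig$ enters.
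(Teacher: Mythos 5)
Your approach is exactly the paper's: cite Guedj's structure theorem \cite{GUEDJ_LARGE_DEGREE} for rational self-maps of $\CP^2$ with large topological degree, verify that algebraic stability together with $\deg_{top}\Rmig = 8 > 4 = \deg\Rmig$ places $\Rmig$ in that class, then read off (i)--(iv), the entropy value $\log 8$ via the Dinh--Sibony bound \cite{DS_ENTROPY}, and the Lyapunov bound $\tfrac12\log(8/4)=\log\sqrt2$. One small caution: you present (iv), with the qualification that the repelling periodic points lie in $\supp\nu$, as if stated directly by Guedj; in fact Guedj does not emphasize that the points lie in $\supp\nu$, and the paper flags this in a remark — the refinement follows from Guedj's proof together with total invariance of $\supp\nu$ (cf.\ \cite[Thm 1.4.13]{DS_SURVEY} for the analogous holomorphic argument), so it deserves an explicit word rather than being quoted as already in \cite{GUEDJ_LARGE_DEGREE}.
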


The measure $\nu$ satisfies the backwards invariance $\Rmig^* \nu = 8
\nu$, hence its support is totally invariant.  In our situation, $\supp
\nu \subsetneq J_R$ because (for example) the points in
$\WW^s(\BOTTOMphys)$ are not in $\supp \nu$.  It can be thought of as the
``little Julia set'' within $J_\Rmig$ on which the ``most chaotic'' dynamics
occurs.  

\noindent
\begin{rem}
The statement of (iv) in \cite[Thm. 3.1]{GUEDJ_LARGE_DEGREE} does not
emphasize that one can restrict his or her attention to the periodic points within $\supp \nu$, 
but it follows from the proof in \cite{GUEDJ_LARGE_DEGREE} and the fact that
$\supp \nu$ is totally invariant.  See \cite[Thm. 1.4.13]{DS_SURVEY} for
the analogous argument for holomorphic $f: \CP^2 \ra \CP^2$.
\end{rem}

\begin{rem}We know very little about the support of $\nu$.  See Problem \ref{PROB:MEASURE_CP2}.
\end{rem}


\section{Volume Estimates} \label{SEC:VOL_EST_ONE_ITERATE}

This section is devoted to estimating the volume exponent $\sigma(z,f)$ in terms of the Jacobian $Jf$ and to studying the sets
\begin{align}\label{EQN:DEF_VOLUME_EXPONENT_SETS}
D_{\geq a} := \{z \in \CP^2 \, : \, \sigma(z,f) \geq a\} \quad \mbox{and} \quad D_{> a} := \{z \in \CP^2 \, : \, \sigma(z,f) > a\}
\end{align}
for various values of $a > 0$.

\begin{lem}\label{LEM:TUBE_VOL} Let $h(x,y)$ be a non-constant holomorphic function vanishing with multiplicity $\ell$ at $(0,0)$.  Then there is a neighborhood $N$ of $(0,0)$ in $\C^2$ and a constant $K > 0$ such that for any $s > 0$ we have
\begin{align*}
\vol(\{|h(x,y)| < s\} \cap N) \leq K s^{2/\ell}.
\end{align*}
\end{lem}

\begin{proof}
We can suppose that the coordinates $(x,y)$ satisfy $h(0,y) \not \equiv 0$.
The Weierstrass Preparation Theorem then gives that there is a sufficiently small bidisc $\D_\epsilon^2$ centered at $(0,0)$ in which
\begin{align*}
h(x,y) = \alpha(x,y)\left(y^\ell + \beta_{\ell-1}(x) y^{\ell-1} + \cdots + \beta_0(x)\right),
\end{align*}
where $\alpha(x,y)$ is a non-vanishing holomorphic function and  the coefficients $\beta_j(x)$ are holomorphic in $\D_\epsilon$.

Up to a multiplicative constant, we can suppose $\alpha(x,y) = 1$.  Then in each vertical slice we have
\begin{align*}
\{x = x_0\} \cap \{|h(x_0,y)| < s\}  \subset \{x = x_0\} \times \bigcup_{i=1}^\ell  \D_{s^{1/\ell}}(r_i)
\end{align*}
where $r_1,\ldots,r_\ell$ are the roots of $h(x_0,y)$, listed with multiplicities.  In particular,
\begin{align*}
{\rm area}(\{x = x_0\} \cap \{|h(x_0,y)| < s\}) \leq K_1 s^{2/\ell}
\end{align*}
for some constant $K_1$.  The result then follows by Fubini's Theorem, after integrating over all $x_0 \in \D_\epsilon$.
\end{proof}

We now estimate the volume exponent $\sigma(z,f)$ in terms of the order of vanishing $\mu(z,f)$ of the complex Jacobian
$\Jac f$ at $z$ (see p. \pageref{DEF:VOL_EXP}).

\begin{lem}[Favre-Jonsson {\cite[Prop.  6.3]{FAVRE_JONSSON}}]\label{LEM:FJ_ESTIMATE}
Let $U, V \subset \C^2$ and $f: U \rightarrow V$ be holomorphic and at most $d_{\rm top}$-to-one off of a measure zero subset of $U$.
Then for any $z \in U$ we have
\begin{eqnarray}
\sigma(z,f) \leq \mu(z,f)+1.
\end{eqnarray}
\end{lem}

\begin{proof}
Lemma~\ref{LEM:TUBE_VOL} gives a neighborhood $N$ of $z$ and constant $K_0 > 0$ for which
\begin{align*}
\vol(\{|\Jac f(x,y)|^2 < s\} \cap N) \leq K_0 s^{1/\mu},
\end{align*}
where $\mu \equiv \mu(z,f)$.  Let $X$ be a measurable subset of $N$
and choose $s$ so that $K_0 s^{1/\mu}~=~\frac{1}{2}\vol(X)$.  
The Chebyshev Inequality gives
\begin{align*}
\vol f(X) &\geq \frac{1}{d_{\rm top}} \int_X |\Jac f|^2 d \vol \geq \frac{s}{d_{\rm top}} \Big(\vol X - \vol(\{|\Jac f|^2 < s\})  \Big) \\
& \geq \frac{s}{2 d_{\rm top}} \vol X \geq K \left(\vol X\right)^{1+\mu}.
\end{align*}
for an appropriate constant $K > 0$.

\end{proof}

\begin{lem}\label{LEM:SHARP}
Suppose $f: U \rightarrow V$ satisfies the hypotheses of Lemma~\ref{LEM:FJ_ESTIMATE}.
If $z$ is a smooth point of the critical locus of $f$, then $\sigma(z,f) = \mu(z,f)+1$, 
i.e.\ the estimate from Lemma \ref{LEM:FJ_ESTIMATE} is sharp at such points.
\end{lem}

\begin{proof}
It suffices to prove that $\sigma(z,f) \geq \mu(z,f)+1$.
We will do this by showing that in any neighborhood $N$ of $z$ there is a constant $C > 0$ such that there are measurable sets $X\subset N$ of arbitrarily small
measure for which $\vol f(X) \leq C (\vol X)^{\mu+1}$.

Since $z$ is a smooth point of the critical locus, 
one can choose local coordinates $(x,y)$ centered at $z$ so that 
$\Jac f(x,y) \asymp y^\mu$.  Given any neighborhood $N$ of $z$ we can apply
a linear rescaling to our coordinates in order to assume that the unit bidisc $\mathbb{D}^2$ 
is contained in $N$.  
For any $0 < \epsilon < 1$ let 
$X := \mathbb{D} \times \mathbb{D}_\epsilon \subset N$.
We have
\begin{align*}
\vol f(X) & \leq \int_X |y|^{2\mu} d \vol  = \pi \int_{0}^{2\pi}
\int_{0}^{\epsilon} r^{2\mu} r dr d\theta = 2\pi^2
\frac{\epsilon^{2\mu+2}}{2\mu+2} \asymp (\vol X)^{\mu+1}.
\end{align*}
\end{proof}

\begin{rem}
Unlike in one-dimensional dynamics, one can have points with  $\sigma(z,f)>d$.
Consider $f: \C^2 \ra \C^2$ given by $f(x,y) = (xy^2,y^3)$.  One has $\Jac = 3
y^4$ so that $\sigma({\bm 0},f) = 5$, by Lemma \ref{LEM:SHARP}.
\end{rem}

\msk

If the zero set of $\Jac f$ has a normal crossing singularity at $z$, then one has the following stronger estimate:

\begin{lem}\label{LEM:SPECIAL_JACOBIAN}Let $U, V \subset \C^2$ and $f: U \rightarrow V$
be holomorphic and at most $d_{\rm top}$-to-one off of a measure zero subset of $U$.  If $\Jac f~\asymp~x^a y^b$ in suitable local coordinates $(x,y)$ centered
at $z \in U$. Then
\begin{align}
\sigma(z,f) \leq \max(a,b)+1.
\end{align}
\end{lem}

The proof of Lemma~\ref{LEM:SPECIAL_JACOBIAN} will use:

\begin{lem}\label{area pullback under Q}
 Let $Q: w\mapsto w^d$. For any measurable set $Y\subset \C$,
$$ \area (Q^{-1} Y) \leq (\area Y)^{1/d}.$$
\end{lem}

\begin{proof}
We can assume that $\area Y>0$.
  Let us take the radius $r>0$ such that $\pi r^2= \area Y$.
Let $Y_- = Y \cap \D_r$, $Y_+ = Y\sm Y_-$, $Y_c = \D_r\sm Y_-$.
 Then
\begin{align*}
     (\area Y)^{1/d} &= (\area \D_r)^{1/d} = \area (Q^{-1} \D_r) = \area (Q^{-1} Y_-)+ \area (Q^{-1} Y_c) \geq \\
   \area (Q^{-1} Y_-) &+ d\Jac Q^{-1}(r) \area Y_c \geq \area (Q^{-1} Y_-)+ \area (Q^{-1} Y_+) = \area (Q^{-1} Y).
\end{align*}
\end{proof}

\begin{proof}[Proof of Lemma \ref{LEM:SPECIAL_JACOBIAN}:]
Let $\gamma > \max(a,b)+1$ and let $N = \{|x| < \eps\} \times \{|y| < \eps\}$.
For any measurable set $X \subset N$ we have:
\begin{eqnarray*}
\vol f(X) &\geq \frac{1}{d_{\rm top}} \int_X |\Jac f|^2 d \vol \asymp \int_{X^v} |y|^{2b}  \int_{X_y^h} |x|^{2a} \, d\area (x)d\area (y),
\end{eqnarray*}
 where $X^v$ is the projection of $X$ onto the $y$-axis and $X^h_y$ are the slices of $X$ by horizontal lines.
The  inner integral above is exactly $\area (Q_{a+1}(X^h_y))/(a+1)^2$, where $Q_{a+1}(x)=x^{a+1}$.
By Lemma~\ref{area pullback under Q}, it is bounded below by $(\area X^h_y)^{a+1}/(a+1)^2$.
Using the
H\"older inequality, with $p = \gamma/(\gamma-1)$ and $q = \gamma$, we find
\begin{align*}
 \left(\int_{X_v} 1/|y|^{2b/(\gamma-1)} \, d\area (y) \right)^{\gamma-1} \int_{X^v} |y|^{2b} (\area X_y^h )^{a+1} \, d\area (y)  \\
 \geq \left(\int_{X^v} (\area X_y^h)^{(a+1)/\gamma} \, d\area(y)\right)^\gamma \geq  (\vol X)^\gamma.
\end{align*}
The conclusion follows since $b < \gamma -1$ implies that
$1/|y|^{2b/(\gamma-1)}$ is locally integrable.
\end{proof}

\begin{rem}
Favre and Jonsson prove a similar volume estimate within their study of the the exceptional set $\mathcal{E}_1$; see \cite[Prop. 7.1 and Lem. 7.2]{FAVRE_JONSSON}.  
\end{rem}

Let us return to the case that $f: \CP^2 \ra \CP^2$ is a rational map.
For any $z \in I(f)$ we can estimate $\sigma(z,f)$ by applying either
Lemma~\ref{LEM:FJ_ESTIMATE} or Lemma~\ref{LEM:SPECIAL_JACOBIAN}  at each point of the exceptional divisor $\pi^{-1}(z)$:

\begin{prop}\label{PROP:ESTIMATE_FOR_RATIONAL_MAP} Consider the resolution of indeterminacy {\rm (\ref{RESOLUTION})} for $f$.  For any $z \in \CP^2$ the exponent $\sigma(z,f)$ exists and satisfies
\begin{align}
\sigma(z,f) \leq \max_{\tilde{z} \in \pi^{-1}(\{z\})} \sigma(\tilde{z},\tilde{f}).
\end{align}
\end{prop}

\begin{proof}
Note that $\pi: \widetilde{\CP^2} \rightarrow \CP^2$ decreases volumes.  Therefore, for any $z
\in \CP^2$  and any $\gamma~>~\max_{\tilde{z} \in \pi^{-1}(\{z\})}\sigma(\tilde{z},\tilde{f})$ it suffices for us to find a neighborhood
$\tilde{N}$ of $\pi^{-1}(\{z\})$ and a constant $K > 0$ so that for any
measurable set $Y \subset \CP^2$ we have
\begin{align}
\vol_{\widetilde{\CP^2}} \left(\tilde{N} \cap \tilde{f}^{-1}(Y) \right) \leq K \left(\vol Y\right)^{1/\gamma}.
\end{align}
This follows by applying the definition of $\sigma(\tilde{z},\tilde{f})$ at
each point of $\pi^{-1}(\{z\})$ and using that $\pi^{-1}(\{z\})$ is compact.
\end{proof}

\begin{lem}\label{LEM:FINITELY_MANY_SIGMA_VALUES}
The volume exponent $\sigma$ satisfies:
\begin{itemize}
\item[(i)] $\sigma(z,f)$ is an integer away from $I(f)$ and singular points of ${\rm Crit}(f)$,
\item[(ii)] $\sigma(z,f)$ assumes finitely many values as $z$ varies over $\CP^2$, and
\item[(iii)] for any $a > 1$ the set $D_{\geq a}$ and $D_{> a}$ {\rm (}see
Equation \ref{EQN:DEF_VOLUME_EXPONENT_SETS}{\rm )} is algebraic, consisting of
finitely many algebraic curves together with finitely many isolated points.
\end{itemize}
\end{lem}

\begin{proof}
Property (i) is a consequence of Lemma \ref{LEM:SHARP} and Property (ii) follows 
because $I(f)$ and the set of singular points of ${\rm Crit}(f)$ are finite.  
Property (iii) follows from Lemma \ref{LEM:SHARP} and upper semicontinuity of $\sigma$:
\begin{align*}
\limsup_{z \rightarrow z_0} \sigma(z,f) \leq \sigma(z_0,f),
\end{align*}
which is a consequence of the definition of $\sigma$.
\end{proof}

\begin{prop}\label{PROP:CURVES_OF_VERY_HIGH_DEG_COLLAPSED}
Suppose that $C$ is an irreducible algebraic curve contained in~$D_{> d}$.  Then $C$ is collapsed by $f$.
\end{prop}

\begin{proof}
Suppose for contradiction that $C$ is not collapsed by $f$.  Then Lemma
\ref{LEM:NORMALFORM} gives that $C$ is a Whitney Fold of $f$, i.e.\
there exists $\FOLDEXPONENT \in \mathbb{Z}_+$ and a finite $S \subset C$ such that for any $p \in C \setminus S$ there
are systems of holomorphic coordinates $(x,y)$ centered at $p$ and $(z,w)$
centered at $f(p)$ in which \begin{align}\label{EQN:NORMAL_FORM}
(z,w) = f(x,y) = (x,y^{\FOLDEXPONENT}).
\end{align}
Moreover, for all $p \in C \setminus S$ we have $\sigma(p,f) = \FOLDEXPONENT > d$, since we suppose $C \subset D_{> d}$.

In these coordinates, for any $w_0 \neq 0$ we have that $f^{-1}(\{w=w_0\})$
intersects the $y$-axis transversally in $\FOLDEXPONENT > d$ points.
Let $L \subset \CP^2$ be a projective line through
$p$ that is tangent to the $y$-axis in these local coordinates.  If we let
$\Lambda$ be a complex projective line in $\CP^2$ that is tangent to
$\{w=w_0\}$ at $(0,w_0)$ and take $|w_0|$ sufficiently small, then $f^{-1}(\Lambda)$ will be an algebraic curve of degree $d$
that intersects $L$ transversally in $\FOLDEXPONENT > d$ points, by the stability of transverse intersections between analytic curves under
small perturbations.  This contradicts the Bezout Theorem.
\end{proof}


\section{Proof of the Equidistribution Theorem for the DHL} \label{SEC:DHL_EQUIDISTRIBUTION}

\subsection{Transformation of volume by $R$} \label{SEC:DHL_VOLUME_ESTS}

\begin{prop}\label{PROP:2ND_ITERATE_IS_GOOD}
There exist constants $K > 0$ and $1~<~\tau~<~\deg(R^2) = 4^2$ such that
\begin{align*}
\vol\left(R^{-2} (Y) \right) \leq K \left(\vol Y\right)^{1/\tau}
\end{align*}
for any
measurable $Y \subset \CP^2$.
\end{prop}

\begin{proof}
Let $\Omega$ be any forward invariant neighborhood of the two superattracting fixed points $\CFIXmig$ and $\CFIXmig'$. 
We will first prove that there exists $K' > 0$ such that
\begin{align}\label{EQN:DESIRED_ESTIMATE_FIRST_ITERATE}
\vol\left(R^{-1} (Y) \right) \leq K' \left(\vol Y\right)^{1/3}
\end{align}
for any
measurable $Y \subset \CP^2 \setminus \Omega$.  By compactness, it suffices to prove that every $z \in \CP^2 \setminus \Omega$
has volume exponent $\sigma(z,R) \leq 3$. 

Lemma \ref{LEM:MULTIPLICITIES_C2}(a) gives that for any $z \in \CP^2 \setminus \{\CROSSING,\CFIXmig,\CFIXmig',\INDmig_\pm\}$ we have $\mu(z,R) \leq 2$
and hence $\sigma(z,R) \leq 3$, by Lemma \ref{LEM:FJ_ESTIMATE}.
Meanwhile, Lemma \ref{LEM:MULTIPLICITIES_C2}(b) gives local coordinates $(x,y)$ centered at $\{\CROSSING\} = L_0 \cap L_2$ in which $\Jac R  \asymp xy^2$,
so that Lemma \ref{LEM:SPECIAL_JACOBIAN} gives $\sigma(\CROSSING,R) \leq 3$.

We now use Proposition~\ref{PROP:ESTIMATE_FOR_RATIONAL_MAP} to check that the
indeterminate points $\INDmig_\pm$ satisfy  $\sigma(\INDmig_\pm,R)~\leq~3$.  By
Lemmas \ref{LEM:MULTIPLICITIES_EXCEPTIONAL} and \ref{LEM:SPECIAL_JACOBIAN} we
have $\sigma(\tilde{z},\tilde{R}) \leq 3$ for every $\tilde{z} \in
L_\ex(\INDmig_+) \cup  L_\ex(\INDmig_-)$.  
This completes the proof of  (\ref{EQN:DESIRED_ESTIMATE_FIRST_ITERATE}).

\vspace{0.05in}

A calculation\footnote{We omit the calculation, but the reader can readily
check it using Maple.} shows that $\mu(\CFIXmig,DR^2) = 14$  and hence
Lemma~\ref{LEM:FJ_ESTIMATE} gives
$\sigma(\CFIXmig,R^2)~\leq~15$.  By symmetry, the same holds at $\CFIXmig'$.
Combined with (\ref{EQN:DESIRED_ESTIMATE_FIRST_ITERATE}), this completes the
proof of Proposition~\ref{PROP:2ND_ITERATE_IS_GOOD}.
\end{proof}

\begin{rem}
In Lemma \ref{LEM:MULTIPLICITIES_C2}{\rm (c)} we saw that $\mu(\CFIXmig,R) = 4$,
so that Lemma~\ref{LEM:FJ_ESTIMATE} gives $\sigma(\CFIXmig,R)~\leq~5$, which is
insufficient for our purposes.  Meanwhile, the four separate critical curves
$L_0,L_1,L_3^+$ and $L_3^-$  meeting at $\CFIXmig$ imply that $\det D R$ does
not have the form needed to apply Lemma~\ref{LEM:SPECIAL_JACOBIAN}.  {\rm (}The same
holds at the symmetric fixed point $\CFIXmig'$.{\rm )}  This is why we needed to pass
to the second iterate of $R$ in the proof of
Proposition~\ref{PROP:2ND_ITERATE_IS_GOOD}.   
\end{rem}

\subsection{Completing the proof of the Equidistribution Theorem for the DHL}\label{SUBSEC:DHL_PROOF}

Let $g:=R^2$ and $d:=16 = {\rm deg}(g)$.
It suffices to prove that
\begin{eqnarray}\label{EQN:DESIRED_EQUIDISTRIBUTION_G}
\frac{1}{d^n \deg(A)}(g^n)^* [A] \rightarrow  \Green
\end{eqnarray}
for any algebraic curve $A \subset \CP^2$ because
the Green current $S$ for $R$ is also the Green current for $g = R^{2}$
and because the normalized pullback $\frac{1}{4}R^*$ acts continuously on the  
space of closed positive $(1,1)$ currents and has $S$ as a fixed point.  

Let $\pi: \C^3 \setminus \{0\} \rightarrow \CP^2$ denote the canonical projection.
For any $z \in \CP^2$ we will denote by $\hat{z} \in \C^3 \setminus \{0\}$ any choice of a point
of $\pi^{-1}(z)$.
Let
\begin{align*}
A = \{z \in \CP^2 \,:\, P(\hat z) = 0\},
\end{align*}
with $P$ a homogeneous polynomial of degree $a = \deg(A)$.
We must show that the limit
\begin{equation}\label{F is Green}
\lim_{n\to \infty} \frac 1{a d^{n}}  \log |P \circ \hat g^n (\hat{z}) |
\end{equation}
exists in $L^1_\loc(\C^3)$ and is equal to the Green Potential
\begin{equation}\label{DEF_GREEN}
G(\hat z) := \lim_{n \to \infty} \frac 1{d^{n}} \log \|\hat g^n \hat z\|
\end{equation}
of $g$. 

\vspace{0.05in}

The homogeneous polynomial $P$ determines a section $s_P$ of the $a$-th
tensor power of the hyperplane bundle; see Appendix \ref{hyp bundle}.
For each $n \geq 0$ we will consider the function
\begin{align*}
\phi_n: \mathbb{CP}^2 \rightarrow [-\infty,\infty) \quad \mbox{where} \quad \phi_n(z) := \frac{1}{a d^{n}} \log \|s_P(g^n(z))\|,
\end{align*}
with $\|\cdot \|$ denoting the Hermitian norm on this bundle.  By definition of the norm,
\begin{align*}
\phi_n(z) = \frac 1{a d^{n}}  \log \frac{|P \circ \hat g^n (\hat{z}) |}{\|\hat g^n \hat z\|^a} = \frac 1{a d^{n}}  \log |P \circ \hat g^n (\hat{z}) | - \frac 1{d^{ n}} \log \|\hat g^n \hat z\|.
\end{align*}
The limit in (\ref{DEF_GREEN})
exists by the hypothesis that $R$ (and hence~$g$) is algebraically stable; see
Theorem~\ref{Green potential}.  Therefore, the desired convergence of currents
(\ref{EQN:DESIRED_EQUIDISTRIBUTION_G}) will follow from:

\begin{thm}\label{Herm norm}
$
\phi_n \to 0  \quad \mathrm{in}\ L^1_\loc(\CP^2)\quad \mathrm{as}\quad n\to \infty.
$
\end{thm}

\begin{proof}

We will use the following general convergence criterion:

\begin{lem}\label{generality}
  Let $\phi_n$ be a sequence of $L^2$ functions on a finite measure space $(X,m)$
with bounded $L^2$-norms. If $\phi_n\to 0 $ a.e. then $\phi_n\to 0 $ in $L^1$.
\end{lem}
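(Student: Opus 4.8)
The plan is to deduce $L^1$-convergence from the a.e.\ convergence together with the uniform integrability supplied by the uniform $L^2$-bound; this is nothing but the Vitali convergence theorem, which I would re-prove by hand since the statement is short. Write $M := \sup_n \|\phi_n\|_{L^2} < \infty$. The first step is to record the elementary consequence of the Cauchy--Schwarz inequality that for every measurable $A \subset X$,
\[
\int_A |\phi_n|\, dm \;\le\; \|\phi_n\|_{L^2}\, m(A)^{1/2} \;\le\; M\, m(A)^{1/2},
\]
so that the integrals $\int_A |\phi_n|\, dm$ are small, uniformly in $n$, as soon as $m(A)$ is small.

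Next I would invoke Egorov's theorem — here is where finiteness of $m$ enters — to reduce to a set of uniform convergence. Given $\varepsilon > 0$, choose $\delta > 0$ with $M\delta^{1/2} < \varepsilon/2$, and then a measurable set $E \subset X$ with $m(X \setminus E) < \delta$ on which $\phi_n \to 0$ uniformly. Splitting
\[
\int_X |\phi_n|\, dm \;=\; \int_E |\phi_n|\, dm \;+\; \int_{X \setminus E} |\phi_n|\, dm,
\]
the second term is at most $M\, m(X \setminus E)^{1/2} < M\delta^{1/2} < \varepsilon/2$ by the Cauchy--Schwarz bound above, uniformly in $n$; while for the first term, uniform convergence on $E$ together with $m(E) \le m(X) < \infty$ gives $\int_E |\phi_n|\, dm \le \|\phi_n\|_{L^\infty(E)}\, m(X) \to 0$, hence $< \varepsilon/2$ for $n$ large. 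Therefore $\limsup_n \int_X |\phi_n|\, dm \le \varepsilon$, and since $\varepsilon$ was arbitrary, $\phi_n \to 0$ in $L^1$.

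I do not expect any real obstacle here: the argument is a textbook fact. The only points requiring a modicum of care are that finiteness of $m$ is used twice (so that Egorov applies and so that $m(X \setminus E)^{1/2}$ and $m(X)$ are finite), and that the exponent $2$ plays no special role — any $L^p$-bound with $p > 1$ would suffice, with H\"older's inequality in place of Cauchy--Schwarz.
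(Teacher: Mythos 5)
Your proof is correct and follows essentially the same route as the paper's: both apply Egorov's theorem to extract a set of uniform convergence, bound the integral there by the sup-norm, and use Cauchy--Schwarz with the uniform $L^2$-bound to control the integral over the small complementary set. The only difference is cosmetic bookkeeping of the $\varepsilon$'s and $\delta$'s.
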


\begin{proof}
Take any $\eps> 0$ and $\de>0$.
  By Egorov's Lemma,  there exists a set $X'\subset X$ with $m(X\sm X')<\eps$
such that $\phi_n\to 0$ uniformly on $X'$. So, eventually the sup-norms of the $\phi_n$ on $X'$ are
bounded by $\de$. Hence
\begin{align*}
  \int |\phi_n|\, dm = \int_{X'} |\phi_n|\,  dm + \int_{X\sm X'} |\phi_n|\, dm\leq \de \cdot m(X) + B\sqrt{\eps},
\end{align*}
where the last estimate follows from the Cauchy-Schwarz Inequality (with $B$ the $L^2$-bound on the $\phi_n$).
The conclusion follows.
\end{proof}


\begin{lem}\label{vol neighborhood A}
There exists $K > 0$ such that for any $s < 0$ we have
\begin{align*}
\vol ( \{\phi_0 < s\}) \leq K e^{2s}.
\end{align*}
\end{lem}

\begin{proof}
Since $A$ is compact, it suffices to work in a neighborhood of any point $z \in
A$.  Without loss of generality, we can suppose $z= [0:0:1]$ so that it is the origin in the affine coordinates
$(x,y) \mapsto [x:y:1]$.  In these coordinates, 
\begin{align*}
\frac{|P(x,y,1)|}{\|(x,y,1)\|^a}  \asymp |P(x,y,1)|.
\end{align*}
The local multiplicity of $P(x,y,1)$ at $(0,0)$ is less than or equal to $a = \deg(P)$.
The estimate then follows in a neighborhood of $z$ from Lemma~\ref{LEM:TUBE_VOL}.
\end{proof}

\begin{lem}\label{vol pullback}
For any measurable set $Y \subset \CP^2$ there exists $1 < \tau < d$ and  $K > 0$ such that
for any $n \geq 0$ we have
\begin{align}\label{EQN:DYNAMICAL_VOLUME_ESTIMATE}
 \vol g^{-n} Y \leq K \, (\vol Y)^{1/\tau^n}.
\end{align}
\end{lem}

\begin{proof}
The estimate for a single iterate of $g = R^2$ is Proposition~\ref{PROP:2ND_ITERATE_IS_GOOD}.  The result then
follows inductively if we let $K := K_0^s$, where $s = \sum_{n=0}^\infty \frac{1}{\tau^n}$ and $K_0$ is the constant given by Proposition~\ref{PROP:2ND_ITERATE_IS_GOOD}.
\end{proof}

We will estimate the distribution of the tails of the random variables $\phi_n$:

\begin{lem}\label{tales}
Let $M=\sup \phi_0(z)$. Then there exists $K > 0$ so that
\begin{align*}
   \vol \{ |\phi_n|>r\}\leq K \exp\left(-  2 r \left(\frac{d}{\tau}\right)^n\right) \quad \mathrm{for\ any}\ r> M d^{-n}.
\end{align*}
\end{lem}

\begin{proof}
We have:
\begin{align*}
X_n(r) &:= \{|\phi_n |>r\}  = \{\phi_0 \circ g^n > r d^{n} \}\cup  \{\phi_0 \circ g^n < - r  d^{n}\} \\
&=  \{ \phi_0 \circ g^n < -r d^{n}\}  = g^{-n} \{  \phi_0 < -r d^{n}\}.
\end{align*}
We have used that $\phi_0 < r d^{n}$ to see that the first term in the union is empty.
According to Lemma~\ref{vol neighborhood A} there is a constant $K_1 > 0$ such that
\begin{align*}
\vol \{ \phi_0 < -r d^{n}\} \leq K_1 \exp(- 2 r d^{n})
\end{align*}
The result then follows from Lemma~\ref{vol pullback}.
\end{proof}

We can now show that the functions $\phi_n$ satisfy the conditions of Lemma~\ref{generality}.

\begin{lem}
   Assuming {\rm (\ref{EQN:DYNAMICAL_VOLUME_ESTIMATE})} holds for some $1 \leq \tau \leq d$, the sequence $\phi_n$ is $L^2$-bounded.
\end{lem}

\begin{proof}
   We have:
$$
  \| \phi_n \|^2 \leq \sum_{\ell=0}^\infty (\ell+1)^2 \vol \{|\phi_n|\geq \ell\}.
$$
By Lemma~\ref{tales}, this sum  is bounded by
\begin{align*}
  &\sum_{\ell=0}^{M+1}(\ell+1)^2  +   K \sum_{\ell>M} (\ell+1)^2 \exp\left(- 2 \ell \left(\frac{d}{\tau}\right)^n\right) \\
&\leq K_0 +  K \sum_{\ell=0}^\infty (\ell+1)^2   \exp (-2 \ell)<\infty.
\end{align*}
\end{proof}

\begin{lem}
  The sequence $\phi_n$ exponentially converges to 0  almost everywhere.
\end{lem}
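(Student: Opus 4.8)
The plan is to upgrade the tail estimate of Lemma~\ref{tales} into almost-everywhere exponential convergence by a straightforward Borel--Cantelli argument; no new analysis is needed beyond what Lemma~\ref{tales} already provides. First I would fix any rate $\theta\in(1/2,1)$ and apply Lemma~\ref{tales} along the sequence $r_n:=\theta^n$. The hypothesis $r_n>M4^{-n}$ of that lemma amounts to $(4\theta)^n>M$, which holds for all sufficiently large $n$ since $4\theta>2>1$; hence for such $n$,
$$
\vol\{\,|\phi_n|>\theta^n\,\}\;\le\; C\exp\!\bigl(-\gamma\,\theta^n\,2^n\bigr)\;=\;C\exp\!\bigl(-\gamma\,(2\theta)^n\bigr).
$$
Because $2\theta>1$, the exponent $(2\theta)^n$ tends to $\infty$, so the right-hand side decays super-exponentially in $n$; in particular $\sum_n \vol\{|\phi_n|>\theta^n\}<\infty$.

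By the Borel--Cantelli Lemma, the set of $x\in\CP^2$ that lie in infinitely many of the sets $\{|\phi_n|>\theta^n\}$ has volume zero. Thus for almost every $x$ there is an index $N(x)$ with $|\phi_n(x)|\le\theta^n$ for all $n\ge N(x)$, which is exactly the claimed exponential convergence $\phi_n\to0$ a.e. (One could equally well take $r_n=C_5\,n/2^n$ to obtain the slightly sharper rate $|\phi_n(x)|=O(n/2^n)$, but this is not needed.)

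I do not anticipate a real obstacle here: the substance of the argument is already packaged in the volume bound of Lemma~\ref{tales}, whose proof in turn rests on the volume-transformation estimates of Lemmas~\ref{vol pullback outside Omega}--\ref{vol pullback}. The only point requiring a moment's care is that the threshold condition $r>M4^{-n}$ in Lemma~\ref{tales} forces us to ignore the finitely many small values of $n$, which is harmless for a Borel--Cantelli conclusion. Together with the preceding lemma (that the $\phi_n$ are $L^2$-bounded on the finite measure space $(\CP^2,\vol)$), this verifies the hypotheses of Lemma~\ref{generality} and yields $\phi_n\to0$ in $L^1(\CP^2)$, hence in $L^1_\loc$, completing the proof of Theorem~\ref{Herm norm}.
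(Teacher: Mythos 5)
Your argument is correct and is essentially identical to the paper's proof: the paper fixes $\la\in(1,2)$ and applies Lemma \ref{tales} with $r=\la^{-n}$ (your $\theta=\la^{-1}\in(1/2,1)$), notes the threshold $r>M4^{-n}$ holds for large $n$, observes the resulting volumes are summable since $2\la^{-1}>1$, and concludes by Borel--Cantelli. No differences worth noting.
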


\begin{proof}
  Fix any $\la\in (1,d/\tau)$.
For sufficiently large $n$, we have $\lambda^{-n} > M d^{-n}$, hence
Lemma~\ref{tales}  gives
\begin{align*}
  \vol \{|\phi_n|>\la^{-n}\} \leq K \exp \left(- 2 \left(\frac{d}{\tau \la}\right)^n\right).
\end{align*}
Since the sum of these volumes converges,
the Borel-Cantelli Lemma gives that for a.e. $x\in \CP^2$, we eventually have $|\phi_n(x)|\leq \la^{-n}$.
\end{proof}

\noindent
This completes the proof of Theorem~\ref{Herm norm} and hence of the Equidistribution Theorem for the DHL.
\end{proof}


\section{Proof of the Equidistribution Theorem} \label{SEC:EQUIDISTRIBUTION}

This section is devoted to proving:

\begin{prop}[\bf Finding a good iterate]\label{PROP:GOOD_ITERATE}
Suppose that $f: \CP^2 \rightarrow \CP^2$ is a dominant algebraically stable
rational map with $\deg f = d$ that satisfies the hypotheses the
Equidistribution Theorem {\rm (}see p. \pageref{THM:EQUIDISTRIBUTION} \hspace{-0.08in} {\rm )}.  For any forward invariant neighborhood $\Omega$ of
exceptional set $\mathcal{E}$ there exists an iterate
$n_0$ and constants $K > 0$ and $0~<~\tau~<~d^{n_0}$ such that
\begin{align}\label{EQN:DESIRED_ITERATE_ESTIMATE}
\vol\left(f^{-n_0} (Y) \right) \leq K \left(\vol Y\right)^{1/\tau}.
\end{align}
for any
measurable $Y \subset \CP^2 \setminus \Omega$.
\end{prop}

Once this proposition is proved, the remainder of the proof of the Equidistribution Theorem follows in exactly the same
way as the proof of the Equidistribution Theorem for the DHL (\S \ref{SUBSEC:DHL_PROOF}).

Throughout this section, we will supposed that $f: \CP^2 \rightarrow \CP^2$ is a dominant algebraically stable
rational map with $\deg f = d$.  However, we will keep track of which specific Hypotheses (i)-(iii) of the Equidistribution Theorem are used.

\subsection{No curves of maximal degeneracy.}\label{SUBSEC:NO_CURVES_MAX_DEG}

\begin{prop}\label{PROP:NO_CURVES_VERY_HIGH_SIGMA}
Suppose $f$ satisfies Hypothesis {\rm (ii)}.  Then $D_{>d}$ is a finite set.
\end{prop}

\begin{proof}
By Corollary \ref{LEM:FINITELY_MANY_SIGMA_VALUES}, $D_{>d}$ is algebraic, so if it were an infinite set it
would contain some irreducible algebraic curve $C$.  Then Proposition \ref{PROP:CURVES_OF_VERY_HIGH_DEG_COLLAPSED}
implies that $C$ is collapsed by $f$, which results in $C$ containing an indeterminate point for $f$, by Lemma \ref{LEM:COLLAPSED_CURVES_MEET_IF}.
This violates Hypothesis (ii).
\end{proof}

\begin{prop}\label{PROP:NO_CURVES_MAX_DEGEN}
Suppose $f$ satisfies Hypotheses {\rm (i)} and {\rm (ii)} of the Equidistribution Theorem.  Then there
exists $N > 0$ and a finite forward invariant ``non-escaping set'' $\BADSETCORE \subset \BADSET$
such that if $z \in \BADSET \setminus \BADSETCORE$ then there exists $0 < n <
N$ such that $f^n(z)~\not \in~\BADSET$.
\end{prop}

The proof of Proposition \ref{PROP:NO_CURVES_MAX_DEGEN} relies on several basic lemmas
about the structure of one iterate of a rational map that are presented in 
Appendix \ref{APPENDIX:STRUCTURE_RAT_MAP}.
We will also need the following definition.
Suppose $p \in \CP^2 \setminus I(f)$ is not on a collapsed curve.  Then $f$
induces a germ of an open mapping at $p$ and the {\em local topological degree}
$e(p,f)$ is the topological degree of that germ.  It satisfies the chain rule
\begin{align}\label{EQN_LOCAL_TOP_DEG_MULT}
e(p,f^2) = e(p,f) \cdot e(f(p),f),
\end{align}
so long as $e(p,f)$ and $e(f(p),f)$ are defined.

The key step in the proof of Proposition \ref{PROP:NO_CURVES_MAX_DEGEN} is:
\begin{lem}\label{LEM:MAXIMAL_INVARIANT_WHITNEYFOLD_CONTAINS_INDET_PT}
Suppose that $f$ satisfies Hypothesis {\rm (i)}.  Then there is no irreducible algebraic curve $C$ such that for some $k \geq 1$
we have $f^k(C) \subset C$, $C \cap I(f^k) = \emptyset$, and $\sigma(z,f^k) \geq d^k$ for all $z \in C$.
\end{lem}

\begin{proof}
Suppose for contradiction that such a curve $C$ exists.  Then Lemma
\ref{LEM:COLLAPSED_CURVES_MEET_IF} implies that $C$ is not collapsed by $f^k$.
Lemma \ref{LEM:NORMALFORM} then gives that $C$ is a Whitney Fold and that there
is a finite $S \subset C$ such that for every $p \in C \setminus S$ we have
\begin{align}\label{EQN:LOCAL_TOP_DEG_C_LEM}
e(p,f^k) = \sigma(p,f^k) \geq d^k.
\end{align}
Let us now consider the irreducible component $C$ as a divisor $(C)$, assigning it multiplicity one.
(See \cite[Appendix A]{BLR1} for basic background on divisors.)
Since $C$ is disjoint from $I(f^k)$, the pushfoward of $(C)$ by $f^k$ is defined by
\begin{align*}
(f^k)_* (C) := {\rm deg}_{\rm top}(f^k |_C: C \rightarrow f^k(C)) (f^k(C)) = {\rm deg}_{\rm top}(f^k |_C: C \rightarrow C) (C).
\end{align*}
Meanwhile, we have that
\begin{align*}
{\rm deg}((f^k)_* (C)) = d^k {\rm deg}((C));
\end{align*}
see \cite[Lem. A.5]{BLR1}.  Therefore, we conclude that
\begin{align}\label{EQN:DEGTOP_RESTRICTED_TO_C}
{\rm deg}_{\rm top}(f^k |_C: C \rightarrow C) = d^k.
\end{align}
Equation (\ref{EQN:DEGTOP_RESTRICTED_TO_C}) implies that a generic $z \in C$
will have $d^k$ preimages under $f^k |_C$ and Equation
(\ref{EQN:LOCAL_TOP_DEG_C_LEM}) then implies that a generic point $z'$ near $z$,
but not on $C$, will have $d^{2k}$ preimages under $f^k$.  (There are
$d^k$ preimages of $z'$ near each preimage of $z$ under $f^k |_C$.)  By Hypothesis (i)
$I(f) \neq \emptyset$, so this violates the fact that ${\rm deg}_{\rm top}(f) <
d^{2k}$; see Lemma~\ref{LEM:DTOP}.
\end{proof}

\begin{proof}[Proof of Proposition \ref{PROP:NO_CURVES_MAX_DEGEN}]
By Lemma \ref{LEM:FINITELY_MANY_SIGMA_VALUES}, the set $\BADSET$ is algebraic.
Moreover, none of the irreducible components of $\BADSET$ is collapsed by~$f$, using
Hypothesis~(ii) that $\BADSET$ is disjoint from $I(f)$ and Lemma
\ref{LEM:COLLAPSED_CURVES_MEET_IF}.

It suffices to show that for each irreducible component $C \subset \BADSET$ there is
an iterate $n$ such that $f^{n}(C) \not \subset \BADSET$.  In this case,
the Bezout Theorem implies $f^{n}(C) \cap \BADSET$ is finite.  This implies that
all but finitely many points of $C$ are mapped out of $\BADSET$ by $f^n$, since
$f$ does not collapse any irreducible component of $\BADSET$.

Suppose for the purpose of obtaining a contradiction that $C \subset \BADSET$
is some irreducible component with $f^n(C) \subset \BADSET$ for every $n \geq 1$.  As
$\BADSET$ contains finitely many irreducible components,
we conclude that some iterate $f^{n_0}(C)$ is periodic under $f$.
Therefore, without loss of generality, we can suppose that $C$ itself is
periodic under $f$ with some period $k \geq 1$.

Let $C_n := f^n(C)$ for each $0 \leq n < k$.  Lemma \ref{LEM:NORMALFORM} gives 
that there is a finite set $S_n \subset C_n$ and an exponent
$\FOLDEXPONENT_n$ so that for each $p \in C_n \setminus S_n$ we have
\begin{align*}
\sigma(p,f) = e(p,f) = \FOLDEXPONENT_n \geq d,
\end{align*}
with the last inequality coming from the fact that $C_n \subset \BADSET$.
Meanwhile, applying Lemma~\ref{LEM:NORMALFORM} to $f^k$ and $C$, together with
the chain rule 
(\ref{EQN_LOCAL_TOP_DEG_MULT}), we see that there is a finite $S \subset C$ so
that for all $p \in C \setminus S$ we have
\begin{align}\label{EQN:LOCAL_TOP_DEG_C}
\sigma(p,f^k) = e(p,f^k) = e(p,f) \cdot e(f(p),f) \cdots e(f^{n-1}(p),f) \geq d^k.
\end{align}
Using Hypothesis (i), we can now apply Lemma \ref{LEM:MAXIMAL_INVARIANT_WHITNEYFOLD_CONTAINS_INDET_PT} to conclude that $C$ contains a point of $I(f^k)$.
However, this contradicts that for each $0 \leq n < k$ the curve $C_n \subset \BADSET$ and is therefore disjoint from $I(f)$ by Hypothesis (ii).
\end{proof}

\subsection{Superattracting Periodic Points}\label{SUBSEC:SUPERATTRACTING} 

An important aspect of the work of Favre and Jonsson \cite{FAVRE_JONSSON} is to
consider asymptotic versions of the multiplicities $\mu$ and~$c$ (defined in \S \ref{SUBSEC:VANISHIN_ORDER} and \S \ref{SUBSEC:EQUIDIST},
respectively) along the orbit of any $z
\in \CP^2$.   In our paper, $I(f)$ is not assumed to be empty,
so that a point for which an iterate lands on $I(f)$ will have uncountably many
different forward orbits, causing such an asymptotic multiplicity not to exist.
However, we can apply the results of \cite{FAVRE_JONSSON} at regular periodic
points, as we will now summarize.

Let $g: (\C^2,{\bm 0}) \rightarrow (\C^2,{\bm 0})$ be a dominant holomorphic germ.  
Then $\hat{\mu}(z,g^n) := 3+2\mu(z,g^n)$ is submultiplicative:
\begin{align*}
\hat{\mu}(z,g^{n+m}) \leq \hat{\mu}(z,g^n) \hat{\mu}(g^n(z),g^m).
\end{align*}
In particular, 
\begin{align*}
\mu_\infty(z,g) := \lim \hat{\mu}(z,g^n)^{1/n}
\end{align*}
exists. 

For any $n \in \mathbb{N}$, let $c(z,g^n)$ be the order of the lowest order term
in the power series for $g^n$ centered at the origin.
It follows directly from the definition that  $c(z,g^{n+m}) \geq c(z,g^n)
c(g^n(z),g^m)$.  Moreover, Favre and Jonsson proved that  $c(z,g^n) \leq \frac{1}{2}
\mu(z,g^n)+1$.  Therefore, the limit 
\begin{align*}
c_\infty(z,g):=\lim c(z,g^n)^{1/n}
\end{align*}
exists and satisfies
\begin{align}\label{EQN:RELATION_CINFTY_MUINFTY}
c_\infty(z,g) \leq \mu_\infty(z,g).
\end{align}

Suppose $z$ is a regular periodic point of period $k$ for the dominant rational
map $f: \CP^2 \rightarrow \CP^2$.  Then, in suitable local coordinates, $f^k$
defines a dominant holomorphic germ $f^k: (\C^2,{\bm 0}) \rightarrow (\C^2,{\bm 0})$.  By
the discussion of the previous two paragraphs, we can define 
\begin{align*}
\mu_\infty(z,f) := \lim_{n \rightarrow \infty} \hat{\mu}(z,f^{nk})^{1/nk} \qquad \mbox{and} \qquad c_\infty(z,f) := \lim_{n \rightarrow \infty} c(z,f^{nk})^{1/nk}.
\end{align*}
The total degree of the Jacobian divisor $\Jac f^k$ on $\CP^2$ is $3(d^k-1)$, so $\Jac f^k$
cannot vanish to more than this order at ${\bm 0}$; see Remark \ref{REM:DEG_JAC}..  This implies that
\begin{align}\label{EQN:MUINFTY_BOUND_BY_D}
c_\infty(z,f) \leq \mu_\infty(z,f) \leq d = \deg(f).
\end{align}
In particular, this justifies the definition of {\em maximally superattracting
regular periodic point} from the Introduction.   

\begin{thm}{\bf (Favre-Jonsson \cite[Thm. 4.2]{FAVRE_JONSSON})}\label{THM:FJ_GERM}
Let $g: (\C^2,{\bm 0}) \rightarrow (\C^2,{\bm 0})$ be a holomorphic germ.  Let $V_1,\ldots,V_k$ be the irreducible components 
of the critical set of $g$.  Assume that $c_\infty({\bm 0},g) < \mu_\infty({\bm 0},g)$.  Then there exists $a_1,\ldots,a_k \geq 0$ (not all zero) such that
\begin{align}\label{GOOD_PULLBACK}
g^*\left(\sum_i a_i [V_i]\right) \geq \mu_\infty({\bm 0},g) \left(\sum_i a_i [V_i]\right).
\end{align}
\end{thm}

\noindent
Inequality (\ref{GOOD_PULLBACK}) means that if one subtracts the
current on the right from the current on the left, then the result 
is a weighted sum of currents of
integration over finitely many analytic curves, each assigned a non-negative
weight.

\begin{cor} \label{COR:BOUND_RELATING_MULTIPLICITIES}Suppose that $f: \CP^2 \rightarrow \CP^2$ is a dominant algebraically stable rational map
of degree $d$.  If $z$ is a regular periodic point of period $k$ for $f$ such that 
\begin{align}
c_\infty(z,f) < \mu_\infty(z,f) = d,
\end{align}  
then $f^k$ has a backward invariant curve $C$ through $z$.  Each of the irreducible components of~$C$ passes
through $z$.
\end{cor}

\begin{proof}
Taking the $k$-th iterate, we can suppose $z$ is a regular fixed point for~$f$.
Theorem~\ref{THM:FJ_GERM} implies that there exist weights $a_1,\ldots,a_k \geq
0$ (not all zero) such that (\ref{GOOD_PULLBACK}) holds with $\mu_\infty({\bm 0},f) =
d$.  Suppose two irreducible branches $V_i$ and $V_j$ of the critical locus of
the germ $f:(\C^2,{\bm 0}) \rightarrow (\C^2,{\bm 0})$ are obtained as the restriction of the same algebraic curve from
the critical locus of $f: \CP^2 \rightarrow \CP^2$.  In this case, it is straightforward to check the
proof of Theorem \ref{THM:FJ_GERM} from \cite{FAVRE_JONSSON} the weights are equal: $a_i =
a_j$.  Therefore, the local current $\sum_i a_i [V_i]$ extends to a global
closed positive $(1,1)$ current
\begin{align*}
T = \sum \alpha_k [C_k]
\end{align*}
on $\CP^2$ which also satisfies $f^* T \geq d \ T$.  Since the pullback under
$f$ multiplies the total mass of a closed positive $(1,1)$ current on $\CP^2$
by exactly $d$, we conclude that $f^* T = d \ T$ (see Appendix
\ref{APPENDIX:CURRENTS}).  In particular, the support of $T$ is a backward 
invariant curve for $f$ passing through~$z$.
\end{proof}


\subsection{Backward invariant curves}\label{SUBSEC:BACKWARD_INV_CURVES}
Because of Corollary \ref{COR:BOUND_RELATING_MULTIPLICITIES} we need a better understanding of algebraic curves
that are backward invariant under some iterate of $f$.  This subsection is devoted to proving the following:

\begin{prop}\label{PROP:CHARACTERIZATION_BACKWARD_INVARIANT_CURVES}
Suppose that $f$ satisfies Hypotheses {\rm (i)}-{\rm (iii)} of the Equidistribution Theorem and
that $z_0$ is a regular periodic point of period $k$ for $f$.  If $C$ is a {\rm
(}possibly reducible{\rm )} algebraic curve that is backward invariant under
$f^k$, each of whose irreducible components  passes through $z_0$, then $z_0 \in \mathcal{E}(b)$ {\rm (}see p. \pageref{DEF_EXCEPTIONAL_SET} for the
definition of $\mathcal{E}(b)${\rm )}.
\end{prop}

\begin{proof}
We will show that $z_0$ is superattracting and that there is some algebraic curve $C' \subset C$ (possibly also reducible) satisfying
the conditions necessary for $z_0$ to be in $\mathcal{E}(b)$.

Let $C_1,\ldots,C_m$ denote the irreducible components of $C$ and suppose for
contradiction that none of them is collapsed under $f^k$.  In this case, we
claim that $f^{-k}$ induces a permutation on $\{C_1,\ldots,C_m\}$.
Consider an arbitrary $0 \leq i \leq m$ and notice that $f^{-k}(C_i)$ is not reduced to an indeterminate point.  Indeed,
 by considering $C_i$ as a divisor $(C_i)$ of multiplicity one, the fact
that $(f^k)^* (C_i)$ is a divisor of degree $d^k \deg(C_i) > 0$ (see \cite[Lem. A.5]{BLR1}) implies that 
$f^{-k}(C_i)$ is a non-trivial algebraic curve.  Since 
$C$ is backward invariant under $f^k$, this implies that there is at least one $1 \leq j \leq m$
such that $C_j \subset f^{-k}(C_i)$.
Meanwhile, for each $1 \leq n \leq m$ the irreducible component $C_n$ is not
collapsed by $f^k$ so that it can occur as a component of $f^{-k}(C_p)$ for at
most one value of $1 \leq p \leq m$.  We conclude that, for each $1 \leq i \leq m$
there exists a unique $j$ such that $f^{-k}(C_i) = C_j$.

In particular, there exists $\ell \geq 1$ such that $B:=C_1$ is backward
invariant under $f^{\ell k}$.  If we consider $B$ as a divisor $(B)$ of
multiplicity one, then this implies that
\begin{align}\label{EQN:PULLBACK_MULT_BY_DLK}
(f^{\ell k})^*(B) = d^{\ell k} (B)
\end{align}
(see again \cite[Lem. A.5]{BLR1}).

Backwards invariance under $f^{\ell k}$ also implies $f^{\ell k}(B \setminus
I(f^{\ell k})) \subset B$.  Moreover, since we are supposing that none of the
irreducible components of $C$ is collapsed by $f^k$, we can use~(\ref{EQN:PULLBACK_MULT_BY_DLK}) and the Whitney Fold normal form given by
Lemma~\ref{LEM:NORMALFORM} to find that $\sigma(z,f^{\ell k}) \geq d^{\ell k}$ for
all $z \in B$.  Using Hypothesis (i), it follows from Lemma
\ref{LEM:MAXIMAL_INVARIANT_WHITNEYFOLD_CONTAINS_INDET_PT} that that $B$
contains a point of~$I(f^{\ell k})$.

For each $0 \leq j < \ell k$ we let
$B_j :=\overline{f^j(B \setminus I(f^j))}$, which is a non-trivial irreducible
algebraic curve that is not collapsed by $f$.  Proposition
\ref{PROP:CURVES_OF_VERY_HIGH_DEG_COLLAPSED} gives that at most finitely many
points of $B_j$ are in $D_{>d}$.  Thus, in order to have $\sigma(z,f^{\ell k})
= d^{\ell k}$ for generic points of $B$, we must have that $B_j \subset
D_{\geq d}$ for each $1 \leq j \leq \ell k$.  Hypothesis (ii) implies that
$B_j$ is disjoint from $I(f)$ for each $0 \leq j < \ell k$, contradicting
that $B$ contains a point of~$I(f^{\ell k})$. 

We can therefore let $C' \subset C$ denote the (non-trivial) union of all
irreducible components of $C$ that are collapsed by some iterate of $f^k$.
Since each of these components passes through the regular periodic point $z_0$,
each of them collapses to $z_0$.  Since $C$ is backward invariant under $f^k$,
$C'$ is also backward invariant under $f^k$.

Suppose for contradiction that $z_0$ is a smooth point of $C'$.  Since every
irreducible component of $C$ passes through $z_0$, this implies that $C'$ is
itself irreducible.  In particular, backward invariance of $C'$ under $f^k$
implies that
\begin{align}\label{EQN:PULLBACK_MULT_BY_DK}
(f^{k})^*(C') = d^{k} (C').
\end{align}
 Then we can
choose local coordinates $(x,y)$ centered at~$z_0$ such that $C' = \{x=0\}$.
Write $f^k$ in these local coordinates as 
\begin{align}\label{EQN:F_LOCAL_COORDS}
f^k(x,y) = (f^k_1(x,y),f^k_2(x,y)).
\end{align}
Equation (\ref{EQN:PULLBACK_MULT_BY_DK}) implies that $f^k_1(x,y) = x^{d^k} g(x,y)$ for
some non-vanishing holomorphic function $g$.  From this, it is immediate that
$f^k$ contracts the volume of a small bidisc centered along $C'$ with exponent
$d^k$, i.e. that for ever $z \in C'$ we have $\sigma(z,f^k) \geq d^k$.

Let $z_0,\ldots,z_{k-1}$ denote the periodic orbit of $z_0$.  Moreover, let $1
\leq j_0 \leq k$ be the smallest iterate for which $C'$ is collapsed by $f^j$.
For each $0 \leq j < j_0$ we let 
\begin{align*}
C'_j:=\overline{f^j(C' \setminus I(f^j))}.
\end{align*}
Then $C'_{j_0-1}$ is collapsed by $f$ and $z_{j_0} \in \BISET$, by definition.
By Hypothesis (iii), we have $\sigma(z_j,f) \leq d$ for each $0 \leq j \leq
k-1$.  Meanwhile, for each $0 \leq j \leq j_0-1$ the curve $C'_j \not \subset
D_{>d}$, by Proposition \ref{PROP:NO_CURVES_VERY_HIGH_SIGMA}.  Therefore, in
order to have $\sigma(z,f^k) \geq d^k$ for every $z \in C'$ we must have
$C'_{j_0-1} \subset D_{\geq d}$.  This contradicts Hypothesis~(ii) since
$C'_{j_0-1}$ contains a point of $I(f)$, by Lemma
\ref{LEM:COLLAPSED_CURVES_MEET_IF}.

Therefore, $z_0$ is a singular point of $C'$.  We now use this to show that $z_0$ is superattracting.
Let $(x,y)$ be local holomorphic coordinates centered at $z_0$ and suppose that $C'$ is given in these coordinates as $C' = \{q(x,y) = 0\}$
for some holomorphic function~$q$
(more specifically, we choose $q$ so that it defines the divisor $(C')$ with multiplicity one).  Since $z_0$ is a singular point of $C'$ we have 
\begin{align}\label{EQN:P_PARTIALS}
q(0,0) = \frac{\partial q}{\partial x} (0,0) = \frac{\partial q}{\partial y}(0,0) = 0.
\end{align}
Let $\ell \geq 1$ be chosen so that $f^{\ell k}(C' \setminus I(f^{\ell k})) = z_0$.  
Writing $f^{\ell k}(x,y) = (f^{\ell k}_1(x,y),f^{\ell k}_2(x,y))$ in these local coordinates we have that 
\begin{align}\label{EQN:F_FACTORS}
f^{\ell k}_1(x,y) = q(x,y) g_1(x,y) \qquad \mbox{and} \qquad  f^{\ell k}_2(x,y) = q(x,y) g_2(x,y),
\end{align}
for some holomorphic functions $g_1(x,y)$ and $g_2(x,y)$.  Equations (\ref{EQN:P_PARTIALS}) and (\ref{EQN:F_FACTORS}) imply that
$Df^{\ell k}(0,0) = {\bm 0}$, i.e.\ that $z_0$ is a superattracting periodic point.
We conclude that $z_0 \in \mathcal{E}(b)$.
\end{proof}

\subsection{Behavior of the volume exponent $\sigma$ under iteration.}

The exponents $\sigma(z,f)$ do not transform very well under iteration: if an iterate
of $z$ lands on $I(f)$, this leads to many different orbits of $z$,
all of which we need to control.   
Let $Y \subset \CP^2$.  
Given a sequence of open sets $N_0,\ldots,N_{n-1} \subset \CP^2$ let
\begin{align*}
f^{-n}_{N_0,N_1,\ldots,N_{n-1}}(Y) =  \{& z_0 \in N_0 \, : \, \mbox{there exists orbit} \, \bm z = (z_0,z_1,\ldots,z_{n}) \\  
&\mbox{with} \, z_i \in N_i \,  \, \mbox{for $1 \leq i \leq n-1$ and $z_n \in Y$}\}.
\end{align*}
If $\bm z = (z_0,z_1,\ldots,z_{n-1})$ is an orbit of $f$ we let $\sigma(\bm z,f^n) \equiv \sigma(z_0,z_1,\ldots,z_{n-1},f^n)$ be
the smallest positive number such that for any $\gamma > \sigma(\bm z,f^n)$ there is an $K > 0$ and neighborhoods $N_0,\ldots,N_{n-1}$ of $z_0,\ldots,z_{n-1}$ such that 
\begin{align}\label{EQN:DEF_ITERATED_SIGMA}
\vol(f^{-n}_{N_0,\cdots,N_{n-1}} Y) \leq K (\vol Y)^{1/\gamma}
\end{align}
for any measurable $Y \subset \CP^2$.

It is clear from the definition that 
for any orbit $z_0,\ldots,z_{n-1}$ and any  $1 \leq k \leq n-1$
\begin{align*}
\sigma(z_0,z_1,\ldots,z_{n-1},f^n) &\leq \sigma(z_0,\ldots,z_{k-1},f^k) \sigma(z_k,\ldots,z_{n-1},f^{n-k}) \quad \mbox{and}\\
\sigma(z_0,z_1,\ldots,z_{n-1},f^n) &\leq \sigma(z_0,f^n),
\end{align*}
where the exponent on the right hand side of the second inequality is from (\ref{EQN:DEF_SIGMA}).

\begin{lem}\label{LEM:VOL_ESTIMATES_UNDER_COMPOSITION}
For any $z_0 \in \CP^2$ we have
\begin{align}
\sigma(z_0,f^n) \leq \sup_{\bm z } \sigma(\bm z,f^n),
\end{align}
where the supremum is taken over all orbits $\bm z = (z_0,\ldots,z_{n-1})$.
\end{lem}

\begin{proof}

Since the indeterminate points of $f^i$ for each $1 \leq i \leq n$ are
isolated, we can find a neighborhood $V$ of $z_0$ containing no indeterminate
points for each $f^i$ other than (potentially)~$z_0$.  We do a finite
sequence of blow-ups over $z_0$ forming $\pi: \widetilde{V} \rightarrow \CP^2$
so that for each $1 \leq i \leq n$ the iterate $f^i$ lifts to a regular map
$\widetilde{f^i}$ making the diagram
\begin{eqnarray*}
\xymatrix{
\widetilde{V} \ar[d]^\pi \ar[dr]^{\widetilde{f^i}} & \\
V \ar[r]^{f^i} & \CP^2,
}
\end{eqnarray*}
commute wherever $f^i \circ \pi$ is defined.  

Let $\gamma$ be any exponent greater than $\sup_{\bm z } \sigma(\bm z,f^n)$.
Any point $\tilde{z} \in \pi^{-1}(z)$ determines the following orbit of length
$n$ of $z$:
\begin{align}\label{EQN:ORBIT_OF_Z}
\bm z \equiv (z_0,\ldots,z_{n-1}) := \left(z,\widetilde{f^1}(\tilde{z}),\ldots,\widetilde{f^{n-1}}(\tilde{z})\right).
\end{align}
By choice of $\gamma$, there is a is a sequence of neighborhoods $U_0(\bm z),
\ldots, U_{n-1}(\bm z)$ of $z_0,\ldots,z_{n-1}$, respectively, so that
(\ref{EQN:DEF_ITERATED_SIGMA}) holds.  Associated to
this sequence is a neighborhood $\widetilde{U}(\bm z)$ of $\tilde{z}$ such that
for any $Y \subset \CP^2$ we have
\begin{align}\label{GOODEQN}
f^{-n}_{U_0(\bm z), \ldots, U_{n-1}(\bm z)} Y = \pi\left(\left(\widetilde{f^n}\right)^{-1} Y \cap \widetilde{U}(\bm z)\right).
\end{align}
Since $\pi^{-1}\{z_0\}$ is compact, there exist finitely many length $n$ orbits $\bm z^1,\ldots, \bm z^\ell$ of $z_0$ 
whose neighborhoods $\widetilde{U}(\bm z^1),\ldots,\widetilde{U}(\bm z^1)$ cover $\pi^{-1}(z_0)$. 
If we let $U = \bigcup_{i=1}^\ell \pi\left(\tilde{U}(\bm z^i)\right)$ then 
(\ref{GOODEQN}) implies that for any $Y \subset \CP^2$ we have
\begin{align*}
f^{-n}(Y) \cap U  \subset  \bigcup_{i=1}^\ell f^{-n}_{U_0(\bm z^i), \ldots, U_{n-1}(\bm z^i)} Y.
\end{align*}
The result follows since each $f^{-n}_{U_0(\bm z^i), \ldots, U_{n-1}(\bm z^i)} Y$ satisfies (\ref{EQN:DEF_ITERATED_SIGMA}) with some suitable
multiplicative constant.
\end{proof}

We will also use a modified notation $\sigma(z,f,W)$ for the volume exponent in the case
that we require that (\ref{EQN:DEF_SIGMA}) only holds for measurable $Y \subset W$.
A straightforward adaptation of the proof of Lemma \ref{LEM:VOL_ESTIMATES_UNDER_COMPOSITION} yields:

\begin{lem}\label{LEM:VOL_ESTIMATES_UNDER_COMPOSITION_RESTRICTED_CODOMAIN}
Let $\Omega \subset \CP^2$ be a forward invariant open set.
For any $z_0 \in \CP^2 \setminus \Omega$ we have
\begin{align}
\sigma(z_0,f^n,\CP^2 \setminus \Omega) \leq \sup_{\bm z } \sigma(\bm z,f^n),
\end{align}
where the supremum is taken over all orbits $\bm z = (z_0,\ldots,z_{n-1}) \subset \CP^2 \setminus \Omega$.
\end{lem}

\subsection{Proof of Proposition~\ref{PROP:GOOD_ITERATE}.}
Let $\Omega$ be any forward invariant neighborhood of the exceptional set $\mathcal{E}$.
We will prove that there exist $B > 0$ and $1 < \alpha < d$ such that
for any $n \geq 0$ and any finite orbit $z_0,\ldots,z_{n-1}$ remaining in $\CP^2 \setminus \Omega$ we have
\begin{align}\label{EQN:GROWTH_CONDITION}
\sigma(z_0,\ldots,z_{n-1},f^n) \leq  B \alpha^n.
\end{align}
We can then let $n_0$ be sufficiently large that $\tau:= B \alpha^{n_0} < d^{n_0}$.
Then Lemma~\ref{LEM:VOL_ESTIMATES_UNDER_COMPOSITION_RESTRICTED_CODOMAIN} will imply that
$\sigma(z_0,f^{n_0},\CP^2 \setminus \Omega) < \tau$ for any $z_0 \in \CP^2
\setminus \Omega$.  Since $\CP^2 \setminus \Omega$ is compact, it is then covered by finitely many neighborhoods such that on each neighborhood $N$ we have
\begin{align*}
\vol(f^{-n_0} Y \cap N) \leq K_N (\vol Y)^{1/\tau},
\end{align*}
which will therefore prove the proposition.

\msk
To prove (\ref{EQN:GROWTH_CONDITION}), we consider three types of finite orbits. 

\msk
\noindent
{\bf Type I:} Periodic orbits that pass through $\VERYBADSET \setminus \mathcal{E}$.

\msk
\noindent
Let $p$ be any point from such an orbit.   We could\footnote{For example, $p$
could be a fixed point in $\VERYBADSET$.} have $\sigma(f^n(p),f) \geq d$ for
every $n \geq 0$, but the hypothesis that $p \not \in \mathcal{E}$ and the
results of \S \ref{SUBSEC:SUPERATTRACTING} and \S
\ref{SUBSEC:BACKWARD_INV_CURVES} will imply that $\sigma(p,f^n)$
decays sufficiently as we iterate and indeed grows at exponential rate slower than~$d$.

By Hypothesis (iii), $p$ is a regular
periodic point, i.e.\ the orbit of $p$ is disjoint from $I(f)$.  Since $p \not
\in \mathcal{E}(a)$ it is not maximally superattracting, $c_\infty(p,f) < d$.
Therefore, since $p \not \in \mathcal{E}(b)$, Corollary
\ref{COR:BOUND_RELATING_MULTIPLICITIES} and Proposition
\ref{PROP:CHARACTERIZATION_BACKWARD_INVARIANT_CURVES} imply that
$\mu_\infty(p,f) < d$.  Using the volume estimate from Lemma
\ref{LEM:FJ_ESTIMATE}, this implies that $\lim \sup (\sigma(p,f^n))^{1/n} < d$.
Hence, (\ref{EQN:GROWTH_CONDITION}) holds for such an orbit.

\msk
\noindent
{\bf Type II:} Periodic orbits contained in $\BADSETCORE \setminus \mathcal{E}$.

\msk
\noindent
Recall that $\BADSETCORE \subset \BADSET$ is finite and forward invariant by
Proposition \ref{PROP:NO_CURVES_MAX_DEGEN}.  It is disjoint from $I(f)$ by
Hypothesis (ii).  Inequality (\ref{EQN:GROWTH_CONDITION}) holds for these orbits
using exactly the same reasoning as for orbits of Type I.

\msk
\noindent
{\bf Type III:} Finite orbits ${\bm z} = (z_0,\ldots,z_{n-1}) \subset \CP^2
\setminus \Omega$ that are disjoint from orbits of Type~I and Type II.

\msk
\noindent
Let $k$ denote the number of elements of $\BADSETCORE$, which is finite and
forward invariant by Proposition \ref{PROP:NO_CURVES_MAX_DEGEN}.  Since
$\BADSETCORE$ is forward invariant and orbit ${\bm z}$ is disjoint from
periodic orbits of Type~II, we have  $z_0,\ldots,z_{n-k-1} \not \in
\BADSETCORE$.   Moreover, Proposition \ref{PROP:NO_CURVES_MAX_DEGEN} gives $N
\in \mathbb{N}$ such that at least one out of every $N$ points of
$z_0,\ldots,z_{n-k-1}$ is ``good'', landing in $\CP^2 \setminus \BADSET$.
Moreover, Lemma \ref{LEM:FINITELY_MANY_SIGMA_VALUES} gives a uniform $1 \leq
\sigma_0 < d$ such that $\sigma(p,f) \leq \sigma_0$ for these ``good'' points.

Let $\ell$ denote the number of elements of $\VERYBADSET$, which is finite by
Proposition \ref{PROP:NO_CURVES_VERY_HIGH_SIGMA}.  Since orbit ${\bm z}$ is
disjoint from periodic orbits of Type I, it can only meet $\VERYBADSET$ at most
$\ell$ times.  In other words, $\sigma(z_i,f) > d$ for at most $\ell$ values of
$0 \leq i \leq n-1$.  If we absorb the excess $\sigma(z_i,f) / d > 1$ from
these at most $k$ points into the multiplicative constant $B$, then the
previous paragraph implies that (\ref{EQN:GROWTH_CONDITION}) holds for an orbit
of Type III.

\msk
Let is now consider an arbitrary finite orbit ${\bm z} = (z_0,\ldots,z_{n-1})
\subset \CP^2 \setminus \Omega$.  If it is not of Type I, II, or III, then
there is some $0 < m < n-1$ such that $(z_0,\,\ldots,z_{m-1})$ is of Type III and
$(z_{m},\ldots,z_{n-1})$ is of either Type I or II.  Inequality (\ref{EQN:GROWTH_CONDITION}) holds for such an orbit using the submultiplicativity
\begin{align*}
\sigma(z_0,\ldots,z_{n-1},f^n) \leq \sigma(z_0,\ldots,z_{m-1},f^{m}) \sigma(z_m,\ldots,z_{n-1},f^{n-m}).
\end{align*}
This completes the proof of Proposition~\ref{PROP:GOOD_ITERATE} and hence of the Equidistribution Theorem.
\qed

\subsection{A useful proposition for determining $\mathcal{E}$.}
\label{SUBSEC:CHECKING_FOR_EXCEPTIONAL_POINTS}

It is easy to detect collapsed curves by checking each irreducible component of the critical locus, so it is usually
easy to determine $\mathcal{E}(b)$.  The following proposition makes it easier to determine $\mathcal{E}(a)$.

\begin{prop}\label{PROP:CHECKING_FOR_EXCEPTIONAL_POINTS}
Let $f: \CP^2 \rightarrow \CP^2$ satisfy the hypotheses of the Equidistribution Theorem.  Then
any maximally superattracting periodic orbit {\rm (}i.e.\ any point of $\mathcal{E}(a)${\rm )} has orbit passing through
$\BADSET$.  Moreover, it either
\begin{itemize}
\item[(i)] passes through the finite set $\VERYBADSET$, or
\item[(ii)] is contained entirely in the finite non-escaping set $\BADSETCORE \subset \BADSET$.
\end{itemize}
\end{prop}

\begin{proof}
Suppose $z_0$ is a maximally superattracting periodic point so that $d = c_\infty(z_0,f)$, by definition.  
Iterating a round ball of small radius centered at $z_0$, this implies that $\lim \inf (\sigma(p,f^n))^{1/n} = d$.  
Using the submultiplicativity of the volume exponent $\sigma$, the only way this can happen is if (i) or (ii) holds.
\end{proof}


\section{Applications of the Equidistribution Theorem to \\ other hierarchical lattices} \label{SEC:APPLICATIONS}

We now consider the Migdal-Kadanoff renormalization mappings associated to the five hierarchical lattices 
whose generating graphs are shown in Figure \ref{FIG:GEN_GRAPHS} (in the introduction).

\subsection{Linear Chain}
The hierarchical lattice generated by the double edge shown in Figure~\ref{FIG:GEN_GRAPHS} is the same as the classical $\mathbb{Z}^1$ lattice.
The corresponding renormalization mapping is
\begin{align*}
R[U:V:W] = [U^2+V^2:V(U+W):W^2+V^2].
\end{align*}
Its critical locus consists of the curves $C_1:=\{U+W=0\}$ and
$C_2:=\{UW-V^2 = 0\}$.  Generic points $z$ on each of these curves have volume
exponent $\sigma(z,R) = 2 = \deg(R)$.  Since these two curves meet at the
indeterminate points $a_{\pm} := [\pm i:1:\mp i]$, this implies that
$\sigma(a_\pm,R) = \deg(R)$, violating Hypothesis (ii) of the Equidistribution
Theorem.  

Despite this, the Global LYF Theorem holds for this hierarchical lattice.  Let
$\mathcal{F}$ denote the family of lines passing through the fixed point
$[1:0:1]$.  Each line $L \in \mathcal{F}$ is invariant under $R$ and all but
finitely many lines from $\mathcal{F}$ intersect $C_2$ at two distinct points,
neither of which is the intersection of the Principal LYF Locus $S_0 =
\{U+2V+W=0\}$ with $C_2$.  For such generic lines $L$ we have $R|_L(z) = z^2$
in a suitable choice of local coordinate $z$.  The exceptional points $z=0$ and
$z= \infty$ correspond to the intersections of $L$ with the conic $C_2$ and
$S_0 \cap L$ corresponds to a non-exceptional point.  Therefore, for all but
finitely many $L \in \mathcal{F}$ we have that the (normalized) iterated
preimages of $S_0 \cap L$ equidistribute to the measure of maximal entropy for
$R|_L$, see \cite{LYUBICH:NOTE,LYUBICH:MAX ENT,FLM}, which is equal to the
slice $S|_L$, where $S$ is the Green current for~$R$.  This is sufficient to
prove that $\frac{1}{2^n} (R^n)^* [S_0] \rightarrow S$.  Pulling everything
back under $\Psi$ one finds that the Global LYF Theorem holds for this
hierarchical lattice.

\subsection{$k$-fold DHL}
One can consider a generalization of the DHL with the generating graph having valence $k \geq 2$ at the marked vertices $a$ and $b$; see Figure~\ref{FIG:GEN_GRAPHS}.
The renormalization
mapping is 
\begin{align*}
R[U:V:W] = [(U^2+V^2)^k:V^k(U+W)^k:(W^2+V^2)^k],
\end{align*}
whose lift $\hat{R} : \mathbb{C}^3 \rightarrow  \mathbb{C}^3$ has Jacobian
\begin{align*}
   \det D\hat R = 2^2 k^3\, V^{k-1}\, (UW-V^2)\, (U+W)^k\, (U^2+V^2)^{k-1}\, (W^2+V^2)^{k-1}.
\end{align*}
In particular, the critical locus of $R$ is the same 
as for the DHL (see \S \ref{SUBSEC:CRITICAL_LOCUS}).  However,
when considered as a divisor, the curves have higher multiplicities, depending on~$k$.

This mapping has the same indeterminate points as for the DHL; $I(R) =
\{\INDmig_\pm\}$.  Since $I(R) \neq \emptyset$,  Hypothesis (i) is satisfied.
The mapping $R$ also has the same superattracting fixed points
$\{\CFIXmig,\CFIXmig'\}$.  Just as in \S \ref{SEC:DHL_VOLUME_ESTS}, one can
check that for any $z \in \CP^2 \setminus \{\CFIXmig,\CFIXmig'\}$ one has
$\sigma(z,R) < 2k = \deg(R)$.    Therefore, $\sigma(\INDmig_\pm)~<~\deg(R)$,
giving that Hypothesis (ii) is satisfied.  Meanwhile, the only curve collapsed
by $R$ is $\{U+W = 0\}$, which is collapsed to the fixed point $b_0 := [1:0:1]
\not \in \{\CFIXmig,\CFIXmig'\}$.  Since $\VERYBADSET \subset
\{\CFIXmig,\CFIXmig'\}$ it is invariant under $R$ and disjoint from $I(R) \cup
I^{-}(R) = \{a_\pm,b_0\}$, Hypothesis (iii) is also satisfied.

We now determine the exceptional set $\mathcal{E}$.  Since $\BADSET \subseteq
\{\CFIXmig,\CFIXmig'\}$ and since $\{\CFIXmig,\CFIXmig'\}$ is invariant,
Proposition \ref{PROP:CHECKING_FOR_EXCEPTIONAL_POINTS} gives that any point of
$\mathcal{E}(a)$ is from $\{\CFIXmig,\CFIXmig'\}$.  As for the DHL,
the curve \\ $\{UW-V^2=0\}$ is invariant and on it $R$ is conjugate to $z \mapsto
z^k$, with $z=0$ and $z=\infty$ corresponding to the superattracting fixed
points $\CFIXmig$ and $\CFIXmig'$, respectively.  Therefore,
$c_\infty(\CFIXmig,R) = c_\infty(\CFIXmig',R) = k < 2k = \deg R$, giving that
these points are not maximally superattracting.  Therefore, $\mathcal{E}(a) =
\emptyset$.  Meanwhile, the fixed point $b_0 \in I^{-}(R)$ is not superattracting,
so $\mathcal{E}(b) = \emptyset$.

Therefore, the Equidistribution Theorem gives that the (normalized) iterated
preimages of any algebraic curve $A \subset \CP^2$ equidistribute to the Green Current
for $R$.  In particular, the Global Lee-Yang-Fisher Current Theorem holds for the $k$-fold DHL.

\subsection{Triangles}
The renormalization mapping for the hierarchical lattice generated by the triangle shown in Figure~\ref{FIG:GEN_GRAPHS} is:
\begin{align*}
R[U:V:W] = [U^3+UV^2:UV^2+V^2W:V^2W+W^3].
\end{align*}
The lift $\hat{R} : \mathbb{C}^3 \rightarrow  \mathbb{C}^3$ has Jacobian
\begin{align*}
   \det D\hat R = 6\,V \left(U+W \right)  \left( {U}^{2}{V}^{2}+3\,{U}^{2}{W}^{2}-{V}^{
2}WU+{V}^{2}{W}^{2} \right),
\end{align*}
so that
the critical locus is the union of two lines and a quartic:
\begin{align*}
L_1 &:= \{V=0\}, \\
L_2 &:= \{U+W = 0\}, \mbox{ and } \\
Q&:= \{U^2V^2+3U^2W^2-UV^2W+V^2W^2\},
\end{align*}
each with multiplicity one.

Hypothesis (i) is satisfied because the indeterminacy locus $I(R)$ is
non-empty,  consisting of three points: $p := [0:1:0]$ and $q_\pm := [\pm
i:1:\mp i]$.  Point $p$ is resolved in one blow-up.  In a neighborhood of
$L_\ex(p)$ the lifted map $\tilde{R}$ has critical locus consisting of
$L_\ex(p)$ and the proper transforms $\tilde{L_2}$ and $\tilde{Q}$, each with
multiplicity one.  Moreover, $\tilde{L_2}$ and $\tilde{Q}$ intersect $L_\ex(p)$
at three distinct points, each of which is a normal crossing singularity.
Therefore, Lemma~\ref{LEM:SPECIAL_JACOBIAN} gives that any point $z \in
L_\ex(p)$ has $\sigma(z,\tilde{R}) = 2$.
Proposition~\ref{PROP:ESTIMATE_FOR_RATIONAL_MAP} then gives $\sigma(p,R) = 2 <
{\rm deg}(R)$.

Both of the indeterminate points $q_\pm$ are also resolved with one blow-up.
The lift $\tilde{R}$ does not have $L_\ex(q_\pm)$ as a critical curve.   The
proper transforms $\tilde{L_2}$ and $\tilde{Q}$ become disjoint and smooth in a
neighborhood of the exceptional divisor $L_\ex(q_\pm)$, so that $\det D
\tilde{R}$ has multiplicity $0$ or $1$ at every point of $L_\ex(q_\pm)$.
Lemma~\ref{LEM:FJ_ESTIMATE} and
Proposition~\ref{PROP:ESTIMATE_FOR_RATIONAL_MAP} then imply that $\sigma(q_\pm,R)
= 2 < {\rm deg}(R)$.  We conclude that Hypothesis (ii) is satisfied.

We now check Hypothesis (iii).  Since each of the critical curves has
multiplicity one, Lemma~\ref{LEM:FJ_ESTIMATE} gives that $\sigma(z,R) \leq 2$
at every smooth point of the critical locus.  Let \\ $r:= [1:0:-1] = L_1 \cap
L_2$.  Then the singular points of the critical locus are  $\{r,e,e'\} \cup
I(R)$.  Moreover, one can choose local coordinates $(x,y)$ in a neighborhood of
$r$ such that $\det DR~\asymp~xy$.  Hence Lemma~\ref{LEM:SPECIAL_JACOBIAN}
gives $\sigma(r,R) = 2$, so that $r \not \in \BADSET$.  Thus, $\BADSET \subset
\{e,e'\}$ is therefore invariant under $R$.   

Both $L_1$ and $Q$ go through two superattracting fixed points $e:=[1:0:0]$ and \\
$e':=[0:0:1]$, so they are not collapsed by $R$.  Meanwhile, a direct
calculation shows that $L_2$ is collapsed by $R$ to $s:=[1:0:-1] \in I^{-}(R)$,
which is a fixed point of $R$.  Since $\BADSET$ is
invariant under $R$ and disjoint from $I(R) \cup I^{-}(R) = \{p,q_\pm,s\}$,
Hypothesis (iii) is satisfied.

By Proposition \ref{PROP:CHECKING_FOR_EXCEPTIONAL_POINTS}, any point of
$\mathcal{E}(a)$ passes through $\BADSET \subset \{e,e'\}$, which is invariant
under $R$.  Moreover, the fixed points $e$ and $e'$ are not maximally
superattracting since the curve $\{UW-V^2~=~0\}$ is invariant for this
renormalization mapping as well, and on it $R$ is conjugate to $z \mapsto z^2$.
Therefore, $\mathcal{E}(a) = \emptyset$.  The only point of $I^{-}(R)$ is the
fixed point $s$, which is not superattracting.  Therefore, $\mathcal{E}(b) =
\emptyset$.

Thus, the Equidistribution Theorem gives
that the (normalized) iterated preimages of any algebraic curve $A$ equidistribute to the
Green current for $R$.  In particular, the Global Lee-Yang-Fisher Current
Theorem also holds for the hierarchical lattice generated by the triangle.

\subsection{Tripod}
The renormalization mapping for the hierarchical lattice generated by the tripod shown in Figure~\ref{FIG:GEN_GRAPHS} is:
\begin{align*}
& R[U:V:W] =\\
& [U^{3}+U^{2}V+V^{3}+V^{2}W : V \left( U^{2}+UV+VW + W^{2} \right):U{V}^{2}+{V}^{3}+V{W}^{2}+{W}^{3}],
\end{align*}
which has indeterminacy set $I(R) = \{[\omega:1:\overline{\omega}] \, : \, \omega^3 = -1\}$.

One can resolve the indeterminacy point $p = [-1:1:-1]$ by doing three
blow-ups, each one on done on the previously created exceptional divisor.  When
one applies Lemmas~\ref{LEM:FJ_ESTIMATE}, \ref{LEM:SPECIAL_JACOBIAN}, and
Proposition~\ref{PROP:ESTIMATE_FOR_RATIONAL_MAP} one finds $\sigma(p,R) \geq 3
= \deg(R)$.  Therefore, Hypothesis (ii) of the Equidistribution Theorem fails
for this mapping.  In particular, we do not know if the  Global Lee-Yang-Fisher
Current Theorem holds for the hierarchical lattice generated by the tripod.

\subsection{Split Diamond}
The renormalization mapping for the hierarchical lattice generated by the split diamond shown in Figure~\ref{FIG:GEN_GRAPHS} is:
{\small
\begin{align*}
R[U:V:W] = \left[U^{5}+2U^{2}V^{3}+V^{4}W:V^{2} \left(U^{3}+2UVW+W^{3} \right) :UV^{4}+2{V}^{3}{W}^{2}+{W}^{5}\right].
\end{align*}
}
The lift $\hat{R} : \mathbb{C}^3 \rightarrow  \mathbb{C}^3$ has Jacobian
\begin{align}\label{EQN:SPLIT_DIAMOND_JAC}
   \det D\hat R = &10V \left( UW-{V}^{2} \right)  \\
 & ( 4{V}^{3}{U}^{6}+5{W}^{3}{U}^{6}+5{V}^{2}{W}^{2}{U}^{5}+5{V}^{4}W{U}^{4}+15V{W}^{4}{U}^{4} \nonumber \\
&+ {V}^{6}{U}^{3}+5{V}^{3}{W}^{3}{U}^{3}+5{W}^{6}{U}^{3}+5{V}^{5}{W}^{2}{U}^{2}+5{V}^{2}{W}^{5}{U}^{2} \nonumber \\
&-{V}^{7}WU+5{V}^{4}{W}^{4}U+{V} ^{6}{W}^{3}+4{V}^{3}{W}^{6}). \nonumber
\end{align}

One can check that $I(R) = \{[0:1:0],[\omega:1:\overline{\omega}] \, : \,
\omega^3 = -1\}$.  Each of these indeterminate points requires three blow-ups
to resolve.  One can work through the
details in Maple and check that the lifted mappings have Jacobian of the form
$\det D \tilde{R} \asymp x^a y^b$ with ${\rm max}(a,b) \leq 3$ at each of the
points on each of the exceptional divisors.  Thus,
Lemma~\ref{LEM:SPECIAL_JACOBIAN} and
Proposition~\ref{PROP:ESTIMATE_FOR_RATIONAL_MAP} give $\sigma(p,R) =4 <
\deg(R)$ for each $p \in I(R)$.  Therefore, Hypotheses (i) and (ii) are satisfied.

Let us check Hypothesis (iii).  
Since each of the critical curves has multiplicity~$1$, we have $\sigma(p,R) \leq 2$ for 
any point outside the singular locus of the critical set.  Meanwhile, 
each of the intersections of any two of
the three critical curves and each of the singular points of $C_3$ lie in $I(R)
\cup \{\CFIXmig,\CFIXmig'\}$.    We conclude that $\BADSET \subset \{\CFIXmig,\CFIXmig'\}$ and is therefore invariant under $R$ and disjoint
from $I(R)$.

Each of the irreducible components of the critical locus (corresponding to the
three factors of (\ref{EQN:SPLIT_DIAMOND_JAC})) passes through the two
superattracting fixed points $\CFIXmig = [1:0:0]$ and $\CFIXmig' = [0:1:0]$ of
$R$ and therefore there are no curves collapsed by $R$.  Therefore, $I^{-}(R) =
\emptyset$.  Since $\BADSET$ is invariant under $R$ and disjoint from $I(R)
\cup I^{-}(R)$, Hypothesis~(iii) is satisfied.

Since $I^{-}(R) = \emptyset$ we have that $\mathcal{E}(b) = \emptyset$.  Meanwhile, the
critical curve $\{UW-{V}^{2} = 0 \}$ is invariant under $R$ and on it $R$ is
conjugate to $z \mapsto z^2$, with $z=0$ and $z=\infty$ corresponding to the
superattracting fixed points $\CFIXmig$ and~$\CFIXmig'$, respectively.
Therefore, $c_\infty(\CFIXmig,R) = c_\infty(\CFIXmig',R) = 2 < 5 = \deg R$,
giving that these points are not maximally superattracting.  Therefore, we also
have that $\mathcal{E}(a) = \emptyset$ and the Equidistribution Theorem gives
that the (normalized) iterated preimages of any algebraic curve $A \subset
\CP^2$ equidistributes to the Green Current for $R$.  In particular, the Global
Lee-Yang-Fisher Current Theorem also holds for the hierarchical lattice
generated by the split diamond.

\appendix


\section{Elements of Complex Geometry}\label{APP:COMPLEX GEOM}

This appendix presents background on the material from complex geometry that we
need beyond what is described in \cite[Appendix A]{BLR1}.  We refer the reader there for background
on rational maps, blow-ups, and divisors.

\subsection{Structure of rational maps $f: \CP^2 \rightarrow \CP^2$}\label{APPENDIX:STRUCTURE_RAT_MAP}

Throughout this subsection, we suppose $f: \CP^2 \rightarrow \CP^2$ is a dominant rational map of degree
$d \geq 2$.

There is a finite sequence of blow-ups
$\pi: \widetilde{\CP^2} \rightarrow \CP^2$ done over the points of the indeterminacy set $I(f)$
so that $f$ lifts to a regular map $\tilde{f}: \widetilde{\CP^2} \rightarrow
\CP^2$, making the following diagram commute
\begin{eqnarray}\label{RESOLUTION}
\xymatrix{
\widetilde{\CP^2} \ar[d]^\pi \ar[dr]^{\tilde{f}} & \\
\CP^2 \ar[r]^f & \CP^2,
}
\end{eqnarray}
wherever $f \circ \pi$ is defined (see \cite[Ch. IV, \S 3.3]{SHAF}).
We call such a commutative diagram a {\em resolution of the indeterminacy} of $f$.

For any $Y \subset \CP^2$ one can use the resolution of indeterminacy (\ref{RESOLUTION}) to define
\begin{eqnarray}\label{EQN:MAPPING_OF_SETS}
f(Y) := \tilde{f}(\pi^{-1}(Y)) \quad \mbox{and} \quad f^{-1}(Y):= \pi\left(\tilde{f}^{-1}(Y)\right).
\end{eqnarray}These definitions are independent of the choice of resolution.

The following two lemmas are well-known; see for example \cite{FS}:

\begin{lem}\label{LEM:COLLAPSED_CURVES_MEET_IF} Any curve that is collapsed by $f$ passes through $I(f)$.
\end{lem}

Let ${\rm Crit}(f)$ denote the critical locus of $f|_{\CP^2 \setminus I(f)}$
and let $V:=f(I(f) \cup {\rm Crit}(f))$.  Then the mapping
\begin{align*}
f|_{ \CP^2 \setminus f^{-1}(V)} : \CP^2 \setminus f^{-1}(V) \rightarrow \CP^2 \setminus V
\end{align*}
is a finite degree covering map.  The {\em topological degree} $d_{\rm top}(f)$ is the degree of this cover.

\begin{lem}\label{LEM:DTOP} If $I(f) \neq \emptyset$, then ${\rm deg}_{\rm top}(f) < d^2$.
\end{lem}


\begin{rem}\label{REM:DEG_JAC}
Let $\hat{f}: \C^3 \rightarrow \C^3$ denote a homogeneous lift of $f$.  Then
the complex Jacobian ${\rm Jac}(\hat{f}) := \det D \hat{f}$ is a homogeneous
polynomial of degree $3(d-1)$.  It induces a divisor ${\rm Jac}(f)$ on $\CP^2$
of the same degree.
\end{rem}

\begin{lem}[\bf Whitney Fold Normal Form\footnote{Here we are using a more general notion of ``Whitney Fold''
that was used in \cite[Appendix D.2]{BLR1}, where only exponent $\FOLDEXPONENT_C = 2$ was considered.}]\label{LEM:NORMALFORM} Suppose $C$ is an irreducible component of the critical locus
that is not collapsed by~$f$.  Then there is a finite set $S \subset C$ and
$\FOLDEXPONENT \equiv \FOLDEXPONENT_C \in \mathbb{Z}_+$ such that for any $p \in C \setminus S$
there exist local holomorphic coordinates $(x,y)$ in a neighborhood of $p$ and $(z,w)$ in a neighborhood of $f(p)$ in which
\begin{align*}
(z,w) = f(x,y) = (x,y^{\FOLDEXPONENT}).
\end{align*}
In particular, at every $p \in C \setminus S$ we have $\sigma(p,f) = e(p,f) =  \FOLDEXPONENT$ and for every $p \in C$
we have $\sigma(p,f) \geq \FOLDEXPONENT$.
\end{lem}
\noindent
(The volume exponent $\sigma(p,f)$ is defined in \S \ref{SUBSEC:EQUIDIST} and the local topological degree $e(p,f)$ in \S \ref{SUBSEC:NO_CURVES_MAX_DEG}.)

\begin{proof}
We include the following points from $C$ in $S$:
\begin{itemize}
\item[(i)] singular points of the critical locus of $f$,
\item[(ii)] points $p \in C$ for which $T_p C \subset {\rm Ker} Df(p)$, and
\item[(iii)] points of $I(f)$.
\end{itemize}
Since $C$ is not collapsed by $f$, the set $S$ is finite and, since $C$ is irreducible, $C \setminus S$
is connected.   Essentially the same proof as the characterization of Whitney Folds presented
in Lemma D.2 of \cite{BLR1} gives that for each point $p_0$
satisfying (i) - (iii) we can find local coordinates in some neighborhoods
of $p_0$ and $f(p_0)$ in which the map has the normal form $(z,w) = f(x,y) =
(x,y^\FOLDEXPONENT)$ for some integer $\FOLDEXPONENT \geq 2$.  Moreover, as $C \setminus S$ is connected, we see that
the exponent $r$ is constant on $C \setminus S$.  
From the normal form and Lemma \ref{LEM:FJ_ESTIMATE}, we see that at every
point of $C \setminus S$ we have $\sigma(p,f) = \mu(p,f) + 1 = e(p,f) = \FOLDEXPONENT$.  
The fact that $\sigma(p,f) \geq \FOLDEXPONENT$ for every $p \in \C$ is because $D_{\geq  \FOLDEXPONENT}$ is algebraic, by Lemma \ref{LEM:FINITELY_MANY_SIGMA_VALUES}.
\end{proof}

\subsection{Hyperplane bundle}\label{hyp bundle}
The hyperplane bundle and its tensor powers provide a convenient way to work with divisors on $\CP^k$.

The fibers of $\pi: \C^{k+1} \sm \{0\} \ra \CP^k$ are punctured complex lines $\C^*$. Compactifying each of
these lines at infinity, we add to $\C^{k+1}~\sm~\{0\}$ the line at infinity
$L_\infty\isom \CP^k$ obtaining the total space $$(\C^{k+1})^* \cup
L_\infty\isom  (\CP^{k+1})^*:=  \CP^{k+1}\sm \{ 0\}.$$ The projection naturally
extends to $\pi :  (\CP^{k+1})^* \ra \CP^k$, whose fibers are complex lines
$\C$.  The resulting line bundle is called the {\it hyperplane bundle} over $\CP^k$.  

In homogeneous coordinates $(z_0:\dots :z_k: t)$ on $\CP^{k+1}$,
this projection is just 
\begin{equation}\label{taut bundle pi}
   \pi: (z_0:\dots :z_k: t)\mapsto  (z_0:\dots :z_k),
\end{equation}
with $L_\infty= \{t=0\}$, $(\C^{k+1})^* = \{t=1\}$, and the map $(z:t)\mapsto t/\|z\|$
parameterizing the fibers (here $\|z\|$ stands for the Euclidean norm of $z\in \C^{k+1} \sm \{0\}$). 
This line bundle is endowed with the natural {\it Hermitian structure}:
$\|(z: t)\| = |t|/\|z\|$. 

Any non-vanishing linear form $Y$ on $\C^{k+1}$ determines a section of the hyperplane bundle: 
\begin{equation}
    s_Y: z\mapsto (z:Y(z)), \quad z\in \C^{k+1}.
\end{equation}
The divisor $D_Y$ (a projective line counted with multiplicity $1$) is precisely the zero divisor of $s_Y$.

The $d$th tensor power of the hyperplane bundle can be described as follows.
Its total space $X^d$ is the quotient of $(\C^{k+2})^*$ by the $\C^*$-action
$$
    (z_0,\dots,z_k,t)\mapsto (\la z_0, \dots ,\la z_k , \la^d t), \quad \la\in \C^*. 
$$
We denote the equivalence class of $(\hat z,t)$ using the ``homogeneous'' coordinates $(\hat z:t)$.
The projection $X^d \ra \CP^k$ is natural, as above (\ref{taut bundle pi}).  A
non-vanishing homogeneous polynomial $P$ on $\C^{k+1}$ of degree $d$ defines a
holomorphic section $s_P$ of this bundle given by \hbox{$s_P(z)~=~(\hat z: P(\hat z))$.}
This bundle is endowed with the Hermitian structure:
\begin{eqnarray}\label{EQN:HERMITIAN_STRUCTURE}
\|(z: t)\| = |t|/\|z\|^d.
\end{eqnarray}

More generally, any divisor $D = D_P - D_Q$ defines a section $s_D$ of the
$\deg(D)$-th tensor power of the hyperplane bundle, defined by
$s_D(z) = (\hat z: P(\hat z)/Q(\hat z))$.  One can recover $D$ from $s_D$ by taking its
zero divisor.

\subsection{Currents}\label{APPENDIX:CURRENTS}
We will now give a brief background on currents; for more details see
\cite{DERHAM,LELONG} and the appendices from \cite{DS_SURVEY,S_PANORAME}.  Currents are
naturally defined on general complex (or even smooth) manifolds, however to
continue our discussion of rational maps, divisors, etc, we restrict our
attention to projective manifolds.

A {\em $(1,1)$-current}
$\current$  on $V$ is a continuous linear functional on  $(k-1,k-1)$-forms with
compact support.  It can be also defined as a generalized differential
$(1,1)$-form $\sum \current_{ij} dz_i d\bar z_j$ with distributional coefficients.

A basic example is the current $[A]$ of integration over the regular points $A_{\rm reg}$ of an algebraic hypersurface $A$:
      $$\om\mapsto \int_{A_{\rm reg}} \om,$$
where $\om$ is a test $(k-1,k-1)$-form.  The current of integration over a divisor $D$ is defined by extending linearly.

The space of currents is given the distributional topology: $\current_n \ra \current$
if $\current_n(\om)~\ra~\current(\om)$ for every test form $\omega$.

A differential $(k-1,k-1)$-form $\om$ is called  positive if its integral over
any complex subvariety is non-negative. A (1,1)-current $\current$ is called {\it
positive} if $\current(\om)\geq 0$ for any positive (1,1)-form.
A current $\current$ is called {\it closed} if $d \current=0$, where the differential $d$
is understood in the distributional sense.

In this paper, we focus on closed, positive $(1,1)$-currents.  They have a 
simple description in terms of local potentials, rather 
analogous to the definition of divisors.

Recall that  $\di$ and $\dibar$ stand for the holomorphic and anti-holomorphic parts of the
external differential $d=\di+\dibar$. 
Their composition
 $\frac i{\pi} \di\dibar$ can be\footnote{Many authors introduce real operators $d = \di +\dibar$ and $d^c = \frac{i}{2\pi}(\dibar - \di)$ and write $dd^c$ instead of $\PLapl$.  We use $\PLapl$ to avoid confusion between the operator $d$
and the algebraic degree of a map.}
thought of as a kind of 
``pluri-Laplacian''
because, 
given a $C^2$-function $h$, the restriction of $\PLapl h$ to any
non-singular complex curve $X$ is equal to the form $\frac{1}{2\pi}\De (h|X) dz \wedge d{\bar z}$,
where $z$ is a local coordinate on $X$ and  $\De$ is the usual Laplacian in this coordinate.

If $U$ is an open subset of $\C^k$ and $h: U \ra [-\infty,\infty)$ is a
plurisubharmonic (PSH) function, then  $\PLapl h$ is a closed
(1,1)-current on $U$.  Conversely, the $\di \dibar$-Poincar\'e Lemma asserts
that every closed, positive $(1,1)$-current on $U$ is obtained this way.

Therefore, any closed positive $(1,1)$-current $\current$ on a manifold $V$ can be
described using an open cover $\{U_i\}$ of $V$ together with PSH functions
$h_i: U_i \ra [-\infty,\infty)$ that are  chosen so that $\current =\PLapl h_i$ in
each $U_i$.  The functions $h_i$ are called {\em local potentials} for $\current$
and they are required to  satisfy the compatibility condition that $h_i - h_j$
is pluriharmonic (PH) on any non-empty intersection $U_i \cap U_j \neq \emptyset$.  The {\em
support} of $\current$ is defined by:
\begin{eqnarray*}
\supp \current &:=& \{z \in V \, : \, \mbox{if $z \in U_j$ then $h_j$ is not pluriharmonic at $z$}\}.
\end{eqnarray*}
The compatibility condition assures that that above set is well-defined.  

The Poincar\'e-Lelong formula describes the current of integration over a divisor \hbox{$D = \{U_i,g_i\}$} by the system of local potentials $h_i:= \log|g_i|$.
I.e.,  on each $U_i$ we have $[D] = \PLapl \
\log|g_i|.$ The result is a closed $(1,1)$-current, which is
positive iff $D$ is effective (i.e. the multiplicities $k_i,\ldots,k_r$ are
non-negative).

Suppose $R: V \ra W$ is a dominant
rational map and $\current$ is a closed-positive $(1,1)$-current on $W$.  
The pullback $R^* \current$ is closed positive $(1,1)$-current on $V$, defined as follows.
First, one obtains a closed positive $(1,1)$-current $R^* \current$ defined on $V \sm I(R)$
by pulling-back the system of local potentials defining $\current$ under $R: V \sm I(R)~\ra~W$.
One then extends
$R^* \current$ trivially through $I(R)$, to obtain a closed, positive $(1,1)$-current defined on all of $V$. 
(By a result of Harvey and Polking \cite{HARVEY_POLKING}, this extension is closed.)
See \cite[Appendix A.7]{S_PANORAME} for further details.  Pullback is continuous with respect to the distributional topology.

\ssk
Similarly to divisors, there is a particularly convenient description of closed, positive $(1,1)$-currents on $\CP^k$.
Associated to any PSH function  $H: \C^{k+1}~\ra~[-\infty,\infty)$, having the homogeneity 
\begin{eqnarray}\label{EQN:LOG_HOMOG}
H(\la \hat z) = m \log|\la| + H(\hat z),
\end{eqnarray}
for some $m > 0$, is a closed, positive $(1,1)$-current, denoted by $T_H \equiv
\pi_* \left(\PLapl H\right)$ that is defined as follows.  For any open covering
$\{U_i\}$ of $\CP^k$ admitting local sections \hbox{$s_i: U_i \ra \C^{k+1} \sm \{0\}$} of the canonical projection $\pi$, $T_H$ is defined by the system of
local potentials $\{U_i, H \circ s_i\}$.  I.e.,  in each $U_i$, $T_H$ is
defined by $T_H = \PLapl  \ H \circ s_i$.  Moreover, every closed positive
$(1,1)$-current on $\CP^k$ is described in this way; See \cite[Thm.
A.5.1]{S_PANORAME}.  The function $H$ is called the {\em pluripotential} of~$T_H$.

The {\em mass} of the current $T$ is defined to be
\begin{align*}
\|T\| = \int_{\CP^k} T \wedge \omega_{\rm FS}^{k-1},
\end{align*}
where $\omega_{\rm FS}$ is the Fubini-Study $(1,1)$ form on $\CP^k$ and $T \wedge \omega_{\rm FS}^{k-1}$ is the positive measure defined by 
$\left<T \wedge \omega_{\rm FS}^{k-1},f\right> := \left<T,f \omega_{\rm FS}^{k-1} \right>$.  A calculation shows that the mass of $T_H$ can be computed from
the potential $H$ as
$\|T_H\| = m$, where $m$ is the constant from (\ref{EQN:LOG_HOMOG}).

If $R: \CP^k \ra \CP^k$ is a rational map, the action of pull-back is described by
\begin{eqnarray}
R^* T_H = T_{H \circ \hat R}.
\end{eqnarray}
If $R$ has algebraic degree $d$, then the lift $\hat R$ is a $k+1$-tuple of homogeneous polynomials of degree $d$.  It follows
that the mass satisfies  
\begin{align*}
\|R^* T_H\| = d \|T_H\|.
\end{align*}

\subsection{Kobayashi hyperbolicity and normal families}
\label{APP:KOB}

In \S \ref{SUBSEC:JULIA_AND_FATOU} we use the Kobayashi metric in order to
prove that the iterates $\Rmig^n$ form a normal family on certain subspaces of
$\CP^2$.  Here we recall the relevant definitions and some important results
that we use.  The reader can consult the books
\cite{KOB_HYPERBOLIC,LA_HYPERBOLIC} and the original papers by M.~Green
\cite{GREEN:PAMS,GREEN:AJM} for more details.  For more dynamical applications,
see e.g. \cite{DS_SURVEY,S_PANORAME}.

The Kobayashi pseudometric is a natural generalization of the Poincar\'e metric
on  Riemann surfaces. Let $\|\cdot\|$ stand for the Poincar\'e metric on the  unit disk $\D$.
Let $M$ by a complex manifold.
Pick a tangent vector $\xi \in T M$, and  let $\HH(\xi)$ be the family of holomorphic
curves $\gamma :\D \ra M$ tangent to the line  $\C\cdot \xi$ at $\gamma(0)$.
Then   $Df(v) = \xi$   for some $v\equiv v_\gamma \in T_0\D$,
and the Kobayashi pseudometric is defined to be:
\begin{eqnarray}
ds_M(\xi) = {\rm inf}_{\gamma \in \HH(\xi)} \|v_\gamma\|.
\end{eqnarray}
The Kobayashi pseudometric is designed so that holomorphic maps are distance
decreasing: if $f:U\ra M$ is holomorphic then
$ds_M (Df(\xi)) \leq ds_U (\xi)$.

The reason for  ``pseudo-'' is that for certain complex
manifolds $M$, $ds(\xi)$ can vanish for some non-vanishing tangent vectors  $\xi \neq 0$.
For example, $ds$ identically vanishes on  $\C^k$ or $\CP^k$.
A complex manifold $M$ is called {\em Kobayashi hyperbolic} if $ds$
is non-degenerate: $ds(\xi)>0$ for any non-vanishing $\xi\in TM$.
Then it induces a (Finsler) metric on $M$.

Let $N$ be a compact complex manifold. Endow it with some Hermitian metric
$|\cdot|_N$.  A complex submanifold $M\subset N$ is called {\em hyperbolically
embedded in $N$} if
the Kobayashi pseudometric on $M$  dominates the Hermitian metric on $N$, i.e.,
there exists $c>0$ such that  $ds_M(\xi) \geq  c |\xi|_N$ for all $\xi \in TM$.
Obviously, $M$ is Kobayashi hyperbolic in this case.

A complex manifold $M$ is called {\em Brody hyperbolic} if there are no non-constant
holomorphic mappings $f:\C~\ra~M$.  If $M$ is Kobayashi hyperbolic, it is also
Brody hyperbolic, but the converse is generally not true unless $M$ is compact.  

An open subset of $\CP^2$ that is Kobayashi hyperbolic, but not hyperbolically
embedded in $\CP^2$ is described in \cite[Example 3.3.11]{KOB_HYPERBOLIC}
and an open subset of $\C^2$ that is Brody hyperbolic but not Kobayashi
hyperbolic is described in \cite[Example 3.6.6]{KOB_HYPERBOLIC}.

\msk
A family $\FF$ of holomorphic mappings from a complex manifold $U$ to a
complex manifold $M$ is called {\em normal}  if every sequence in $\FF$ either
has a subsequence converging locally uniformly or a subsequence that diverges
locally uniformly to infinity in $M$.
In the case that $M$ is embedded into some compact manifold $Z$,
a stronger condition is that
$\FF$ is {\it precompact} in ${\rm Hol}(U,Z)$ (where $\Hol(U,Z)$ is the space of holomorphic mappings
$U\ra Z$ endowed with topology of uniform convergence on compact subsets of $U$).


\begin{prop}
\label{COR:NORMAL_FAMILIES}
Let $M$ be a hyperbolically embedded complex submanifold of a compact complex manifold $N$.
Then for any  complex manifold $U$,
the family $\Hol(U,M)$ is precompact in $\Hol (U, N)$.
\end{prop}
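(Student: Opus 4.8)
The plan is to reduce the statement to the classical characterization of normal families via equicontinuity with respect to a comparison metric, using the hypothesis that the Kobayashi pseudometric on $M$ dominates the Hermitian metric induced from $N$. First I would recall that since $N$ is compact, $\Hol(U,N)$ is equicontinuous-closed in a suitable sense: by Arzel\`a--Ascoli (in the metric-space form, using the distance $|\cdot|_N$ on the compact space $N$), a subset $\FF \subset \Hol(U,N)$ is precompact in $\Hol(U,N)$ as soon as it is equicontinuous on every compact subset of $U$; the ``bounded'' condition is automatic because $N$ is compact, and the limit of a uniformly convergent sequence of holomorphic maps into $N$ is again holomorphic. So the entire problem reduces to showing that $\Hol(U,M)$, viewed inside $\Hol(U,N)$, is equicontinuous on compacts of $U$.

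The key step is the equicontinuity estimate, and here is where the hyperbolic embedding hypothesis enters. Fix a compact set $K \subset U$ and fix any point $p \in U$; choose a small coordinate polydisc (or, more invariantly, use the Kobayashi distance $d_U$ on $U$ restricted to a relatively compact neighborhood of $K$). For any $f \in \Hol(U,M)$ and points $x,y$ in this neighborhood, the distance-decreasing property of the Kobayashi pseudometric gives
$$
d_M\big(f(x),f(y)\big) \le d_U(x,y),
$$
where $d_M$ is the integrated Kobayashi pseudodistance on $M$. On the other hand, the hyperbolic embedding hypothesis $ds_M(\xi) \ge c\,|\xi|_N$ for all $\xi \in TM$ integrates to
$$
\Dist_N\big(f(x),f(y)\big) \le \tfrac{1}{c}\, d_M\big(f(x),f(y)\big),
$$
where $\Dist_N$ is the Riemannian distance on $N$ induced by $|\cdot|_N$ — here one must be a little careful: the integrated inequality compares the intrinsic length of a path in $M$ (which bounds $d_M$ from above along minimizing-ish paths) with its $N$-length, and one gets $\Dist_N(f(x),f(y)) \le \frac1c d_M(f(x),f(y))$ by taking a near-minimizing path for $d_M$ between $f(x)$ and $f(y)$. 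Combining, $\Dist_N(f(x),f(y)) \le \frac1c\, d_U(x,y)$, a bound uniform over all $f \in \Hol(U,M)$. Since $d_U$ is continuous and vanishes on the diagonal, this is exactly equicontinuity on compacts of $U$ with respect to $\Dist_N$ (equivalently $|\cdot|_N$).

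Assembling: equicontinuity on compacts plus compactness of $N$ yields, via Arzel\`a--Ascoli, that every sequence in $\Hol(U,M)$ has a subsequence converging uniformly on compacts of $U$ to a continuous map $U \to N$; Weierstrass's theorem (uniform limits of holomorphic maps into a complex manifold are holomorphic) makes the limit lie in $\Hol(U,N)$. Hence $\Hol(U,M)$ is precompact in $\Hol(U,N)$, which is the assertion. The main obstacle I expect is the technical bookkeeping in the second displayed inequality: one needs that the infinitesimal domination $ds_M \ge c|\cdot|_N$ integrates to the stated distance comparison, which requires relating $d_M$ to lengths of paths in $M$ and then to lengths of the same paths measured in $N$ — this is standard (it is essentially how one shows a hyperbolically embedded submanifold has the property that its closure is Kobayashi-complete-ish), but it is the one place where ``pseudo''-ness and the possible failure of $d_M$ to be realized by minimizing geodesics must be handled with an $\varepsilon$ of slack. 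Everything else is soft topology.
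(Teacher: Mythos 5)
Your argument is correct. The paper itself gives no proof of this proposition --- it simply cites Theorem 5.1.11 of Kobayashi's book \cite{KOB_HYPERBOLIC} --- and your outline is essentially the standard argument found there: Arzel\`a--Ascoli reduction to equicontinuity, combined with the uniform Lipschitz estimate coming from the Kobayashi distance-decreasing property and the hyperbolic-embedding domination $ds_M \geq c\,|\cdot|_N$. The one step you flagged as delicate --- passing from the infinitesimal inequality to $\Dist_N(f(x),f(y)) \leq \tfrac1c\, d_M(f(x),f(y))$ --- does not actually require an $\varepsilon$-slack argument about near-minimizers, nor Royden's identification of $d_M$ with the integrated form of $ds_M$. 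One can argue directly from the chain-of-discs definition: given a Kobayashi chain $f_1,\dots,f_k:\D\to M$ from $p$ to $q$ with data $\sum_i\rho(0,a_i)$, let $\sigma_i$ be the Poincar\'e geodesic in $\D$ from $0$ to $a_i$; the concatenation $\gamma$ of the paths $f_i\circ\sigma_i$ joins $p$ to $q$ inside $M\subset N$, and by the decreasing property of $ds_M$ under each $f_i$ together with the embedding hypothesis,
\[
\operatorname{length}_N(\gamma) \;=\; \sum_i \int \big|Df_i(\sigma_i')\big|_N\,dt \;\leq\; \frac1c\sum_i \int ds_M\big(Df_i(\sigma_i')\big)\,dt \;\leq\; \frac1c\sum_i \rho(0,a_i),
\]
so $\Dist_N(p,q)\leq\tfrac1c\,d_M(p,q)$ upon taking the infimum over chains, with no slack needed. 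Finally, the continuity of $d_U$ you invoke at the end is a standard fact for every complex manifold $U$ (even when $d_U$ degenerates), so the equicontinuity conclusion is sound.
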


\noindent
See Theorem 5.1.11 from \cite{KOB_HYPERBOLIC}. 

The classical Montel's Theorem asserts that the family of holomorphic maps
\hbox{$\D~\ra~\C\sm \{0,1\}$} is normal (as $\C\sm \{0,1\}$ is a hyperbolic Riemann surface).
It is a foundation for the whole Fatou-Julia iteration theory.
Several higher dimensional versions of Montel's Theorem,
due to M.~Green \cite{GREEN:PAMS,GREEN:AJM}, are now available.
Though their role in dynamics is not yet so prominent,
they have found a number of interesting applications.
Below we will formulate two particular results  used in this paper (see \S \ref{SUBSEC:JULIA_AND_FATOU}).
%
%
%
The following is Theorem 2 from \cite{GREEN:PAMS}:
\begin{thm}
\label{THM:GREEN1}
Let $X$ be a union of (possibly singular) hypersurfaces $X_1,\ldots,X_m$ in a compact
complex manifold $N$.
Assume $N\sm X$ is Brody hyperbolic and
$$ X_{i_1} \cap \cdots \cap X_{i_k} \sm \left(X_{j_1} \cup \cdots X_{j_l}\right) \mbox{is Brody hyperbolic}$$
for any choice of distinct multi-indices $\{i_1,\ldots,i_k,j_1,\ldots,j_l\} = \{1,\ldots,m\}$.
Then $N \sm X$ is a complete hyperbolically  embedded submanifold of $N$.
\end{thm}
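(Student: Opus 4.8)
The plan is to prove the contrapositive, producing from a failure of hyperbolic embeddedness a non-constant entire curve that is then forced to lie in one of the hypothesis strata. Write $M = N \sm X$ and fix a Hermitian metric on the compact manifold $N$. By the Brody criterion for hyperbolic embeddedness, if $M$ is \emph{not} hyperbolically embedded in $N$ then there are sequences $p_k, q_k \in M$ with $p_k \ra p$, $q_k \ra q$ in $N$, $p \neq q$, yet with Kobayashi distance $d_M(p_k, q_k) \ra 0$. Chaining the analytic discs that realize these vanishing distances, and noting that the ambient distance from $p_k$ to $q_k$ stays bounded below, one finds discs in $M$ whose Poincar\'e-to-ambient dilatation is unbounded; a standard Brody reparametrization then yields holomorphic maps $g_k : \D_{R_k} \ra M$ with $R_k \ra \infty$, normalized so that $\|g_k'(0)\| = 1$ and $\|g_k'\| \le 1$ throughout (derivatives measured from the Poincar\'e metric of $\D_{R_k}$ to the fixed metric on $N$). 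Since $N$ is compact and these derivatives are uniformly bounded, an Arzel\`a--Ascoli/diagonal argument extracts a subsequence converging locally uniformly on $\C$ to a holomorphic map $f : \C \ra N$, non-constant because $\|f'(0)\| = 1$. The essential point to retain is that every $g_k$ takes values in $M$ and hence avoids all of the $X_i$, even though the limit $f$ may well meet $X$.

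The heart of the proof is then a Hurwitz dichotomy applied to each hypersurface. Fix $i$; if $f(\C) \cap X_i = \emptyset$ there is nothing to do, so suppose $f(z_0) \in X_i$ for some $z_0$. Near $f(z_0)$ the hypersurface $X_i$ is cut out by a single holomorphic function $\phi_i$; for $k$ large the functions $\phi_i \circ g_k$ are defined and zero-free on a small disc about $z_0$, and they converge to $\phi_i \circ f$, which vanishes at $z_0$. Hurwitz's theorem forces $\phi_i \circ f \equiv 0$ near $z_0$, so $f$ maps a neighborhood of $z_0$ into $X_i$; thus $f^{-1}(X_i)$ is open, and being closed with $\C$ connected it is either empty or all of $\C$. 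Set $I = \{\, i : f(\C) \subset X_i \,\}$ and $J = \{1,\dots,m\} \sm I$, so that $f(\C) \cap X_j = \emptyset$ for $j \in J$. Then
$$
  f(\C)\ \subset\ \bigcap_{i \in I} X_i \ \sm \ \bigcup_{j \in J} X_j ,
$$
with the conventions that an empty intersection equals $N$ and an empty union removes nothing; when $I = \emptyset$ this set is exactly $M = N \sm X$. In every case the right-hand side is one of the sets assumed Brody hyperbolic, so $f$ must be constant --- a contradiction. Hence $M$ is hyperbolically embedded in $N$.

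Finally, one must promote this to \emph{complete} hyperbolic embeddedness, i.e.\ completeness of $(M, d_M)$. This comes from the same circle of ideas: given a $d_M$-Cauchy sequence, compactness of $N$ provides ambient subsequential limits, and hyperbolic embeddedness forces them all to agree at a single $x \in N$; if $x \in M$ one gets $d_M$-convergence from the local comparison of $d_M$ with the Euclidean metric, while the case $x \in X$ is ruled out because the Kobayashi distance inside the complement of a hypersurface in a small ball blows up near that hypersurface --- itself a consequence, via the argument above, of the Brody hyperbolicity of the strata. (Alternatively, invoke Kiernan's theorem that a hyperbolically embedded subspace of a compact complex space is complete hyperbolic.)

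I expect the main obstacle to be the Brody reparametrization step in this regime where $M$ is neither hyperbolic nor embedded: one must extract, from the bare failure of hyperbolic embeddedness, discs in $M$ of unbounded conformal size with uniformly controlled derivatives, and pass to a non-constant entire limit, all the while keeping the approximants inside $M$ so that the Hurwitz dichotomy of the second step applies. Once that limit curve is in hand, the remainder --- the Hurwitz alternative, the bookkeeping of $I$ and $J$, and the completeness argument --- is essentially formal.
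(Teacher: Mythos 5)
The paper itself does not prove this theorem; it quotes it as Theorem~2 of Green \cite{GREEN:PAMS}, so there is no internal proof to compare with. Your sketch, however, is essentially the standard Brody--Kiernan argument on which Green's theorem rests, and I see no gap that would sink it: convert failure of hyperbolic embeddedness into a non-constant entire curve $f:\C\to N$ arising as a locally uniform limit of maps $g_k$ into $M=N\sm X$, then apply Hurwitz's theorem to a local defining function of each $X_i$ composed with the $g_k$ to get the dichotomy ``$f(\C)\cap X_i=\emptyset$ or $f(\C)\subset X_i$,'' which places $f(\C)$ inside one of the strata assumed Brody hyperbolic --- contradiction.

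Two spots would need tightening if you were to write this out in full. First, the Brody reparametrization is more cleanly launched from the infinitesimal characterization of non-embeddedness (tangent vectors $\xi_k\in T_{p_k}M$ with $|\xi_k|_N=1$ and $ds_M(\xi_k)\to 0$, which produce analytic discs $\gamma_k:\D_{r_k}\to M$, $r_k\to\infty$, with normalized derivative), rather than from the two-point distance version you state; the two are equivalent for relatively compact $M\subset N$, but the chaining-of-discs step you wave at is exactly where the bookkeeping lives. Second, for completeness your direct argument is noticeably looser than the embeddedness argument; the clean route is precisely the alternative you name, Kiernan's theorem that a hyperbolically embedded complement of an analytic subset in a compact complex manifold is complete hyperbolic, and I would simply invoke it rather than re-derive it.
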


In the last section of \cite{GREEN:PAMS},  the following result is proved:

\begin{thm}\label{conic and 3 lines}
Let $M=\CP^2 \sm \left(Q \cup X_1 \cup X_2 \cup X_3\right)$, where
$Q$ is a non-singular conic and $X_1, X_2, X_3$ are lines.
Then  any non-constant holomorphic curve
$f:~\C~\ra~~M$ must lie in a line $L$ that is tangent to $Q$
at an intersection point with one of the lines, $X_i$,
and that contains the intersection point  $X_j \cap X_l$ of the other two lines.
\end{thm}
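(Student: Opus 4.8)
The plan is to exploit the fact that all four defining forms pull back under $f$ to nowhere‑vanishing entire functions, and then to run the classical Borel lemma on sums of exponentials.

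First I would lift a non‑constant $f:\C\ra M$ to an entire map $\tilde f=(f_0,f_1,f_2):\C\ra\C^3\sm\{0\}$ whose components have no common zero. Normalize the homogeneous coordinates so that $Q=\{x_1^2-x_0x_2=0\}$, and let $\ell_1,\ell_2,\ell_3$ be the linear forms cutting out $X_1,X_2,X_3$. By hypothesis $Q(\tilde f)$ and each $\ell_i(\tilde f)$ are entire and zero‑free, so, $\C$ being simply connected, there are entire functions $a_0,a_1,a_2,a_3$ with $Q(\tilde f)=e^{a_0}$ and $\ell_i(\tilde f)=e^{a_i}$. I treat first the generic situation in which $X_1,X_2,X_3$ are in general position (no two equal, not concurrent) and the triangle vertices do not lie on $Q$; the genuinely degenerate configurations are handled separately by the same method applied to the reduced collection of forms. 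Using $\ell_1,\ell_2,\ell_3$ as coordinates, write $f_i=\sum_j c_{ij}e^{a_j}$ and substitute into $Q$ to obtain constants $d_{jk}$ with
\[
   e^{a_0}=\sum_{1\le j\le k\le 3} d_{jk}\,e^{a_j+a_k}.
\]
This is a vanishing linear combination of at most seven exponentials of entire functions, the exponents lying among $a_0,2a_1,2a_2,2a_3,a_1+a_2,a_1+a_3,a_2+a_3$.

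Next I would apply Borel's lemma: the indices with nonzero coefficient partition into blocks of cardinality $\ge 2$ so that within each block any two exponents differ by an additive constant (and each block sub‑sum vanishes). I then enumerate the possible coincidences. A coincidence of the type $2a_i-2a_j$, $2a_i-(a_i+a_j)$, or $(a_i+a_k)-(a_j+a_k)$ gives $a_i-a_j=\mathrm{const}$, i.e.\ $\ell_i(\tilde f)=c\,\ell_j(\tilde f)$, so $f$ maps into a line $L$ of the pencil through $X_i\cap X_j$. Every other coincidence (one involving $a_0$, or of type $2a_i-(a_j+a_k)$ or $(a_i+a_j)-2a_k$) produces a multiplicative relation forcing $f$ into a conic $D$ of one of the shapes $\{\ell_i^2=cQ\}$, $\{\ell_i\ell_j=cQ\}$, $\{\ell_i^2=c\,\ell_j\ell_k\}$, $\{\ell_i\ell_j=c\,\ell_k^2\}$. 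In that case I would argue via B\'ezout that $D$ meets $Q\cup X_1\cup X_2\cup X_3$ in at least three points: it meets each $X_m$, these intersections cannot all collapse to one point by non‑concurrency, nor to just two points, since two distinct lines cannot both be tangent to $D$ at a common point; hence $f:\C\ra D\sm(\cdots)$ into a $\CP^1$ minus three points must be constant, a contradiction. If instead $D$ is reducible, $f$ maps into one of its linear components. Either way we are reduced to: $f$ maps into a line $L$ passing through a vertex $v=X_i\cap X_j$ of the triangle.

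Finally, since $f:\C\ra L\sm(Q\cup X_1\cup X_2\cup X_3)$ is non‑constant, $L$ may meet $Q\cup X_1\cup X_2\cup X_3$ in at most two points. That set contains $v$, the point $L\cap X_k$ for the third line $X_k$ (distinct from $v$ in general position), and $L\cap Q$; since $v\notin Q$, forcing the total down to $\le 2$ requires $L\cap Q=\{L\cap X_k\}$ as a single point, that is, $L$ is tangent to $Q$ exactly at its intersection point with $X_k$, while passing through the vertex $X_i\cap X_j$ of the other two lines, which is the asserted conclusion. I expect the main obstacle to be the Borel bookkeeping: the exhaustive enumeration of all admissible block patterns, complicated by the fact that which $d_{jk}$ vanish depends on the mutual position of the lines and the conic, together with the uniform verification that every ``conic'' alternative is incompatible with $f$ being a non‑constant entire curve. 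Cleanly disposing of the non‑generic configurations is a secondary, but real, chore.
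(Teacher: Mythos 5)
The paper does not prove this theorem; it is quoted from the last section of M.~Green's paper \cite{GREEN:PAMS}, and your Borel-lemma-on-exponentials argument is precisely Green's method there, so the approach matches the cited source. In the generic case your sketch is essentially sound, but one step is compressed: when the $a_0$-block forces $f$ into a reducible conic $D=\{Q-c\,\ell_j\ell_k=0\}$ (or one of the other shapes), you assert that the linear component $L$ into which $f$ lands ``passes through a vertex $X_i\cap X_j$ of the triangle,'' which does not follow from the Borel relation alone. It is, however, forced by your final counting step, which should be run for an arbitrary line $L$: if $L$ misses every vertex it meets $X_1,X_2,X_3$ in three distinct points, so $f:\C\ra L\sm(\cdots)$ is constant; hence $L$ passes through some vertex, and the tangency-at-a-point-of-the-third-line then follows exactly as you argue. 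The larger acknowledged gap, the degenerate configurations (concurrent or coincident lines, triangle vertices on $Q$), is real: when $\ell_1,\ell_2,\ell_3$ fail to be a coordinate basis the substitution and Borel equation must be set up differently, and ``the same method applied to the reduced collection of forms'' is a promise rather than an argument. As written, your proposal establishes the generic case and outlines, but does not complete, the rest.
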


The configurations that appear in this theorem are related to amusing projective triangles:

\subsection{Self-dual triangles}\label{self-dual triangles}

Let $Q(z)=\sum q_{ij} z_i z_j$ be a non-degenerate quadratic form in $E\isom \C^3$,
and $X=\{Q=0\}$ be the corresponding conic in $\CP^2$.
The form $Q$ makes the space $E$ Euclidean,
inducing duality between points and lines in $\CP^2$.
Namely, to a point $z=(z_0:z_1:z_2)$ corresponds the line
$L_z= \{\zeta :\ Q(z, \zeta)=0\}$ called the {\it polar} of $z$ with respect to $X$
(here we use the same notation for the quadratic form and the corresponding inner product).
Geometrically, this duality looks as follows.
Given a point $z\in \CP^2$,
there are two tangent lines from $z$ to $X$. Then $L_z$ is the line passing through the
corresponding tangency points. (In case $z\in X$, the polar is tangent to $X$ at $z$).

Three points $z_i$ in $\CP^2$ in general position are called a ``triangle'' $\De$ with vertices $z_i$.
Equivalently, a triangle can be given by three lines $L_i$ in general position, its ``sides''.
Let us say that $\De$ is {\it self-dual} (with respect to the conic $X$)
if its vertices are  dual to the opposite sides.

\begin{lem}
  A triangle $\De$ with vertices $z_i$ is self-dual if and only if the corresponding vectors $\hat z_i\in E$
form an orthogonal basis with respect to the inner product~$Q$.
\end{lem}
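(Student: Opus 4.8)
The plan is to unwind both implications directly from the definition of the polar line, letting the non-degeneracy of $Q$ and the general-position hypothesis handle the small amount of linear algebra. Write the vertices as $z_0, z_1, z_2$ with lifts $\hat z_0, \hat z_1, \hat z_2 \in E$, and recall that the side of $\Delta$ opposite $z_0$ is the line $\overline{z_1 z_2}$ through $z_1$ and $z_2$, and cyclically. First I would note that since $Q$ is non-degenerate, each linear functional $\zeta \mapsto Q(\hat z_i, \zeta)$ is nonzero, so each polar $L_{z_i} = \{\zeta : Q(\hat z_i, \zeta) = 0\}$ is a genuine projective line; this is the only place non-degeneracy enters the "line versus $\CP^2$" issue.

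For the "if" direction, suppose $Q(\hat z_i, \hat z_j) = 0$ for all $i \neq j$. Then for $\{j,k\} = \{1,2\}$ both $z_j$ and $z_k$ satisfy the defining equation of $L_{z_0}$, so $z_1, z_2 \in L_{z_0}$; since $z_1 \neq z_2$ by general position and two distinct points determine a unique line, $L_{z_0} = \overline{z_1 z_2}$, which is exactly the side opposite $z_0$. Running this cyclically over the three vertices shows each vertex is polar to the opposite side, i.e. $\Delta$ is self-dual.

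For the "only if" direction, suppose $\Delta$ is self-dual, so $L_{z_0} = \overline{z_1 z_2}$, $L_{z_1} = \overline{z_0 z_2}$, $L_{z_2} = \overline{z_0 z_1}$. From $z_1, z_2 \in L_{z_0}$ I read off $Q(\hat z_0, \hat z_1) = Q(\hat z_0, \hat z_2) = 0$; the memberships coming from $L_{z_1}$ and $L_{z_2}$ add $Q(\hat z_1, \hat z_2) = 0$, the other equations being repetitions by the symmetry of $Q$. Hence $Q(\hat z_i, \hat z_j) = 0$ whenever $i \neq j$. Because the vertices are in general position, $\hat z_0, \hat z_1, \hat z_2$ are linearly independent, hence a basis of $E$; moreover each diagonal entry $Q(\hat z_i, \hat z_i)$ is nonzero, for otherwise $Q(\hat z_i, \cdot)$ would vanish on a basis of $E$, contradicting non-degeneracy. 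Thus $(\hat z_i)$ is an orthogonal basis with respect to $Q$.

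The argument is essentially mechanical, so there is no serious obstacle; the only points deserving a word of care are precisely the bridges between the projective and linear pictures — that three points in general position lift to a basis of $E$, that a polar is a bona fide line (which uses non-degeneracy), and that a line containing two vertices of $\Delta$ must be the corresponding side.
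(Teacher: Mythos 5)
The paper states this lemma without proof, treating it as routine, so there is no paper argument to compare against. Your direct verification is correct: the key observation — that $z_j$ lies on the polar $L_{z_i}$ precisely when $Q(\hat z_i,\hat z_j)=0$ — makes both implications immediate, and you invoke non-degeneracy and general position in exactly the places they are needed (each polar is a bona fide line, the lifts form a basis, and the diagonal entries $Q(\hat z_i,\hat z_i)$ are forced to be non-zero). No gaps.
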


All three sides of a self-dual triangle satisfy the condition of Theorem~\ref{conic and 3 lines},
so they can give us exceptional holomorphic curves $\C\ra \CP^2\sm (Q\cup X_1\cup X_2\cup X_3)$.    


\section {Green Current}\label{APP:COMPLEX_DYNAMICS}

\comment{*********************

\subsection{Algebraic Stability}
\label{APPENDIX:ALG_STAB}

The following statement appears in \cite[Prop. 1.4.3]{S_PANORAME}:

\begin{lem}\label{LEM:DEGREE_OF_COMPOSITION}
Let  $R$ and $S$ be two rational maps $\CP^m\ra \CP^m$.
Then $\deg(S \circ R) = \deg(S) \cdot \deg(R)$ if and only if there is no algebraic hypersurface $V \subset
\CP^m$ that is collapsed by $R$ to an indeterminate point of $S$.
\end{lem}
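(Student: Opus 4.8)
The plan is to pass to homogeneous lifts on $\C^{m+1}$ and reduce the degree identity to a statement about common polynomial factors. I would write $\hat R = (R_0,\dots,R_m)$ and $\hat S = (S_0,\dots,S_m)$ for the canonical homogeneous lifts, so $\gcd(R_0,\dots,R_m) = \gcd(S_0,\dots,S_m) = 1$, $\deg R_i \equiv r := \deg R$, $\deg S_j \equiv s := \deg S$, and the indeterminacy loci $I(R) = \{\hat R = 0\}$, $I(S) = \{\hat S = 0\}$ have codimension $\geq 2$. The composed polynomial map $\hat S \circ \hat R$ is homogeneous of degree $rs$; dividing its components by $P := \gcd(S_0 \circ \hat R, \dots, S_m \circ \hat R)$ yields the canonical lift of $S \circ R$, so $\deg(S \circ R) = rs - \deg P$. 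Hence $\deg(S \circ R) = \deg S \cdot \deg R$ is equivalent to $P$ being constant, i.e.\ to the polynomials $S_j \circ \hat R$ having no common non-constant factor, and the lemma becomes the equivalence: \emph{the $S_j \circ \hat R$ share a non-constant factor $\iff$ some hypersurface is collapsed by $R$ into $I(S)$.}

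To prove that a collapsing hypersurface forces a degree drop, suppose a hypersurface $V$ is collapsed by $R$ to a point $p \in I(S)$; passing to an irreducible component, let $Q$ be its irreducible (reduced) defining polynomial. For $z$ in the dense Zariski-open set $V \sm I(R)$ one has $\hat R(z) \neq 0$ and $\pi(\hat R(z)) = p$, so $\hat R(z)$ lies on the line $\pi^{-1}(p)$; since $p \in I(S)$, homogeneity of $\hat S$ forces $\hat S \equiv 0$ on $\pi^{-1}(p) \cup \{0\}$, whence $S_j(\hat R(z)) = 0$ for all $j$ and all such $z$, and therefore — by density — for all $z \in V$. Then the Nullstellensatz (the ideal of the irreducible $V$ being $(Q)$) gives $Q \mid S_j \circ \hat R$ for every $j$, so $P$ is non-constant and $\deg(S\circ R) < rs$.

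For the converse, assume $P$ is non-constant and set $V := \{Q = 0\}$ for an irreducible factor $Q$ of $P$. Each $S_j \circ \hat R$ vanishes on $V$, so for $z \in V \sm I(R)$ — dense in $V$ since $\codim I(R) \geq 2$ — one gets $\hat R(z) \neq 0$ and $\hat S(\hat R(z)) = 0$, i.e.\ $R(z) \in I(S)$. Taking Zariski closures, $W := \overline{R(V \sm I(R))} \subseteq I(S)$ is irreducible with $\dim W \leq \dim I(S) \leq m - 2 < m - 1 = \dim V$, so $R$ collapses $V$ into $I(S)$. In the only case relevant to this paper, $m = 2$, the set $I(S)$ is finite, so $W$ is a single indeterminate point and the formulation in the lemma holds verbatim; for $m \geq 3$ one obtains, in general, a hypersurface collapsed into $I(S)$, which is how the statement should be read in that generality.

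The main obstacle will be bookkeeping rather than conceptual content: keeping the two lifts and their degrees straight, justifying that clearing common factors produces exactly the canonical lift of $S \circ R$ (hence the clean identity $\deg(S \circ R) = rs - \deg P$), and applying the Nullstellensatz to a \emph{reduced} defining equation so that one gets divisibility and not merely vanishing. The single genuinely geometric ingredient that glues the two directions together is that $I(R)$ has codimension $\geq 2$, which is exactly what makes $V \sm I(R)$ dense in $V$ and permits the passage between the algebraic statement about the $S_j \circ \hat R$ and the dynamical statement about $R(V)$.
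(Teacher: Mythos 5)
Your proof is correct and takes essentially the same approach as the reference the paper cites (Sibony, Prop.\ 1.4.3) and as the paper's own sketch: pass to homogeneous lifts, reduce the degree identity to the assertion that the components $S_j\circ\hat R$ share no non-constant factor, and translate between common factors and collapsing hypersurfaces via the Nullstellensatz together with $\codim I(R)\geq 2$. Your parenthetical remark that for $m\geq 3$ the accurate statement is ``collapsed into $I(S)$'' rather than ``to an indeterminate point'' is a fair observation, though immaterial here since the paper only uses $m=2$.
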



\begin{proof}
Let $\hat R$ and $\hat S$ be lifts of $R$ and $S$ of degree
$\deg R$ and $\deg S$, respectively.
An irreducible algebraic hypersurface $V$ given by $\{p(\hat z)=0\}$
(where $p$ is prime)
is collapsed under $R$ to an indeterminate point for $S$ if and only if $p(\hat z)$
divides each of the components of $\hat S \circ \hat R$.
On the other hand,
$\hat S \circ \hat R$ has a common factor if and only if
$\deg(S \circ R) < \deg S \cdot \deg R$.
\end{proof}

\begin{rem}\label{geom deg deficit}
  To understand this phenomenon geometrically, let us consider the algebraic hypersurface $\GG$
to which the indeterminacy point $\gamma$ blows up under $S$. Then any line $L$ must intersect
$\GG$, and hence $S^{-1} L$ passes through $\gamma$. It follows that $V\subset R^{-1}(S^{-1} L)$.
On the other hand, $V\not\subset (S\circ R)^{-1} L$ (unless $L\supset \GG$,
which may happen only for  a special line in dimension two).
So, components of $V$, possibly with multiplicities, account for the degree deficit.
\end{rem}

A rational mapping $R : \CP^m \rightarrow \CP^m $ is called {\em algebraically
stable} if there is no integer $n$ and no collapsing hypersurface $V \subset
\CP^{m}$ so that $R^n(V)$ is contained within the indeterminacy set of $R$,
\cite[p. 109]{S_PANORAME}.  A consequence of Lemma~\ref{LEM:DEGREE_OF_COMPOSITION} is that $R$ is algebraically stable if and only
if $\deg R^n = ( \deg R )^n$.

A direct consequence
of Lemma~\ref{LEM:PULL_BACK} is:

\begin{lem}\label{LEM:PULL_BACKS_AS}
          If $R$ is an algebraically stable map, then for any divisor $D$ we have:
\begin{equation}\label{EQN:PULL_BACKS_AS}
\deg ((R^n)^*D) = (\deg R)^n \cdot \deg D
\end{equation}
\end{lem}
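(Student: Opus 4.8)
The plan is to deduce this immediately from Lemma~\ref{LEM:PULL_BACK} by applying it not to $R$ but to the single rational self-map $R^n\colon\CP^m\ra\CP^m$. The only input that needs comment is the value of $\deg R^n$: as the paper records right after Lemma~\ref{LEM:DEGREE_OF_COMPOSITION}, algebraic stability of $R$ is \emph{equivalent} to $\deg R^n=(\deg R)^n$ for all $n$. Granting this, Lemma~\ref{LEM:PULL_BACK} with $R$ replaced by $R^n$ gives $\deg((R^n)^*D)=\deg(R^n)\cdot\deg D=(\deg R)^n\cdot\deg D$, which is exactly (\ref{EQN:PULL_BACKS_AS}).

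First I would unwind why Lemma~\ref{LEM:PULL_BACK} is legitimately applicable to $R^n$. Write $\widehat R$ for the chosen lift of $R$ of degree $\deg R$. Algebraic stability forces the iterated lift $\widehat R\circ\cdots\circ\widehat R$ ($n$ factors) to have components with no common factor, so it is a bona fide lift of $R^n$ and has degree $(\deg R)^n$; this is precisely what the equivalence with $\deg R^n=(\deg R)^n$ says. Then formula (\ref{EQN:PULL_BACK_ON_CP2}), namely $(R^n)^*D_P=D_{P\circ\widehat{R^n}}$, together with $\deg(P\circ\widehat{R^n})=\deg P\cdot\deg\widehat{R^n}$ for a homogeneous polynomial $P$, yields the claim for effective divisors $D_P$; extending over the difference $D=D_P-D_Q$ by linearity of pullback and of degree finishes the argument.

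An alternative would be to induct on $n$ via $(R^n)^*D=R^*\big((R^{n-1})^*D\big)$ and apply Lemma~\ref{LEM:PULL_BACK} at each step. I would avoid this: the identity $(R^n)^*=R^*\circ(R^{n-1})^*$ for rational maps is exactly where algebraic stability is needed (a hypersurface collapsed by some iterate into $I(R)$ would make the two sides differ), so one would essentially reprove the degree characterization anyway. The only genuinely delicate bookkeeping point in either approach is the ``trivial extension through $I(R^n)$'' in the definition of the pullback of a divisor: since $I(R^n)$ has codimension $\ge 2$, this extension introduces no new hypersurface components and hence does not change $\deg$. I expect this to be the main (and quite minor) obstacle; everything else is a formal consequence of Lemma~\ref{LEM:PULL_BACK} and the equivalence between algebraic stability and $\deg R^n=(\deg R)^n$.
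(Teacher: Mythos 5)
Your proof is correct and follows exactly the route the paper intends: the paper states Lemma~\ref{LEM:PULL_BACKS_AS} as ``a direct consequence of Lemma~\ref{LEM:PULL_BACK},'' which is precisely your step of applying Lemma~\ref{LEM:PULL_BACK} to the single map $R^n$ and then substituting $\deg R^n=(\deg R)^n$ from algebraic stability. Your remarks on why the lift $\widehat R^{\,n}$ has no common factor and on the harmlessness of the trivial extension through $I(R^n)$ correctly fill in the (unstated) details and do not deviate from the paper's argument.
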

%

One can also consider the
{\em dynamical degree}
$$
   \delta(R) := \lim_{n\rightarrow \infty} \left( \deg (R^n)\right)^{1/n}\leq \deg R,
$$
see \cite[p. 110]{S_PANORAME}.
If $R$ is algebraically stable, it agrees with $\deg R$,
but in general it does not.
********************}

\subsection{Green potential}\label{Green potential sec}

A rational mapping $R : \CP^m \rightarrow \CP^m $ is called {\em algebraically
stable} if there is no integer $n$ and no collapsing hypersurface $V \subset
\CP^{m}$ so that $R^n(V)$ is contained within the indeterminacy set of $R$,
\cite[p. 109]{S_PANORAME}.   (See also \cite[Appendix A4 and A5]{BLR1}.)

\begin{thm}[see \cite{S_PANORAME},Thm. 1.6.1]\label{Green potential}
  Let $R: \CP^m \ra \CP^m$ be an algebraically stable rational map of degree $d$.
Then the limit
$$
     G = \lim_{n\to \infty} \frac 1{d^n} \log \|\hat R^n \|
$$
exists in $L^1_\loc(\C^3)$ and determines a plurisubharmonic function. This function satisfies the following
equivariance properties:
\begin{equation}\label{la-equiv}
      G(\la z)= G(z)+\log |\la|, \quad \la\in \C^*,
\end{equation}
$$  G\circ \hat R = d G. $$
\end{thm}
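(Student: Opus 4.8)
The plan is to follow the standard construction of the Green function of an algebraically stable map (Fornaess--Sibony; cf. \cite{S_PANORAME}), so I would present only the main steps. First set $u_n := d^{-n}\log\|\hat R^n\|$ on $\C^3\sm\{0\}$; each $u_n$ is plurisubharmonic, being $\tfrac12\log$ of a sum of squared moduli of holomorphic functions, and is log-homogeneous in the sense that $u_n(\lambda z) = \log|\lambda| + u_n(z)$. By hypothesis $R$ is algebraically stable, so $\deg R^n = d^n$, which by Lemma \ref{LEM:DEGREE_OF_COMPOSITION} is equivalent to saying that the iterated lift $\hat R^n$ has no common factor and genuinely lifts the $n$-th iterate $R^n$ --- this is the point at which algebraic stability enters. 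From the crude polynomial bound $\|\hat R(w)\|\le A\|w\|^d$, valid for some $A\ge1$, one gets $u_{n+1}\le u_n + (\log A)\,d^{-n}$ pointwise, so the corrected sequence $\hat u_n := u_n + \tfrac{d}{d-1}(\log A)\,d^{-n}$ is a \emph{decreasing} sequence of plurisubharmonic functions with $\hat u_n - u_n\to0$ uniformly. I would then define $G := \lim_n \hat u_n = \lim_n u_n$ as a decreasing pointwise limit.

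The crux is to show $G\not\equiv-\infty$, for then $G$ is plurisubharmonic on the connected set $\C^3\sm\{0\}$ and extends plurisubharmonically across the origin. The $0$-homogeneous function $\gamma_n := u_n - \log\|z\|$ descends to a function on $\CP^2$; since the current $\De_p\log\|\hat R^n\|$ pushes down to $(R^n)^*\omega_{\mathrm{FS}}$ on $\CP^2$ while $\deg R^n = d^n$ makes $d^{-n}(R^n)^*\omega_{\mathrm{FS}}$ cohomologous to $\omega_{\mathrm{FS}}=\De_p\log\|z\|$, the descended $\gamma_n$ is $\omega_{\mathrm{FS}}$-plurisubharmonic, i.e. $\omega_{\mathrm{FS}}+\De_p\gamma_n = d^{-n}(R^n)^*\omega_{\mathrm{FS}}\ge0$. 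The inequality of the first paragraph gives $\gamma_n\le B$ for a constant $B$ independent of $n$. Now the family of $\omega_{\mathrm{FS}}$-plurisubharmonic functions on the compact manifold $\CP^2$ with a common upper bound is relatively compact in $L^1(\CP^2)$ and has uniformly bounded integrals (standard compactness for quasi-plurisubharmonic functions, see \cite{DEMAILLY}); in particular $\int_{\CP^2}\gamma_n\,\omega_{\mathrm{FS}}^2$ stays bounded below. Were $\gamma_n\downarrow-\infty$ pointwise, monotone convergence would force these integrals to $-\infty$; hence $G\not\equiv-\infty$ and $G$ is plurisubharmonic.

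Once $G$ is a genuine plurisubharmonic function the $L^1_\loc$ convergence comes for free: $\hat u_1-\hat u_n\uparrow\hat u_1-G$ with all terms in $L^1_\loc(\C^3)$, so $\hat u_n\to G$ in $L^1_\loc$ by monotone convergence, and since $\hat u_n-u_n\to0$ uniformly also $u_n\to G$ in $L^1_\loc(\C^3)$. The two equivariance identities then follow by passing to the limit in exact identities for $u_n$: homogeneity of $\hat R^n$ (of degree $d^n$) gives $u_n(\lambda z)=\log|\lambda|+u_n(z)$, whence $G(\lambda z)=G(z)+\log|\lambda|$; and $u_n(\hat R(z))=d^{-n}\log\|\hat R^{n+1}(z)\| = d\,u_{n+1}(z)$, whence $G\circ\hat R = d\,G$.

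I expect the only genuine obstacle to be the non-degeneracy step of the second paragraph. It is exactly there that algebraic stability is indispensable: without it the naive iterated lift $\hat R^n$ picks up common factors, the normalization by $d^n$ is no longer the cohomologically correct one, and $u_n$ can fail to converge (one would then have to normalize by the dynamical degree); and one must invoke the $L^1$-compactness of $\omega_{\mathrm{FS}}$-plurisubharmonic functions with a fixed supremum bound to rule out $G\equiv-\infty$. Everything else --- the telescoping inequality, the monotone-convergence argument, and the equivariance identities --- is routine.
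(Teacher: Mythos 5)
The paper does not prove this theorem: it is imported verbatim from Sibony's survey (\cite{S_PANORAME}, Thm 1.6.1) and used as background, so there is no in-house argument to compare against. On its own terms your proposal reproduces the standard Fornaess--Sibony scheme --- telescoping upper bound, corrected decreasing sequence of psh functions, monotone convergence in $L^1_{\loc}$, passage to the limit in the exact homogeneity and functional identities --- and those pieces are sound. (A harmless slip: from $\|\hat R(w)\|\le A\|w\|^d$ one gets $u_{n+1}\le u_n+(\log A)\,d^{-(n+1)}$, not $(\log A)\,d^{-n}$, so the correction constant is $\tfrac{\log A}{d-1}d^{-n}$ rather than $\tfrac{d\log A}{d-1}d^{-n}$; this does not affect anything.)

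There is, however, a genuine gap at the step you yourself single out as the crux. You descend $\gamma_n=u_n-\log\|z\|$ to an $\omega_{\mathrm{FS}}$-psh function on $\CP^m$, note $\gamma_n\le B$, and then assert that a family of $\omega_{\mathrm{FS}}$-psh functions with a common upper bound automatically has uniformly bounded integrals, by ``standard compactness.'' As stated this is false: the constant functions $v_n\equiv -n$ are all $\omega_{\mathrm{FS}}$-psh, are bounded above by $0$, and have $\int_{\CP^m}v_n\,\omega_{\mathrm{FS}}^m\to-\infty$. The compactness and uniform-integral theorem for quasi-psh functions on a compact K\"ahler manifold requires the normalization $\sup_{\CP^m}v_n=0$ (or a two-sided control on $\sup v_n$); with only an upper bound it gives $\int\gamma_n\,\omega_{\mathrm{FS}}^m\ge \sup\gamma_n\cdot\vol(\CP^m)-C$, and one still needs $\sup\gamma_n$ bounded below, which is exactly the inequality $\max_{\|z\|=1}\|\hat R^n(z)\|\ge c^{\,d^n}$ for some $c>0$. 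That lower bound is the real content of the non-degeneracy step; it does not follow from anything you write, and it is precisely where algebraic stability has to be used quantitatively rather than merely invoked. Your first paragraph correctly observes that algebraic stability means $\hat R^n$ is a common-factor-free lift of $R^n$, but that observation is never converted into an estimate on $\sup\gamma_n$, so the chain of implications ruling out $G\equiv-\infty$ does not close.
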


\noindent
It is called the {\it Green potential} of $R$.

\subsection{Green current}\label{Green cur sec}

Applying $\PLapl$ to the Green potential, we obtain:

\begin{thm}[see \cite{S_PANORAME},Thm. 1.6.1]\label{Green current}
   Let $R: \CP^m \ra \CP^m$ be an algebraically stable rational map of degree $d$.
Then $\Green=\pi_*(\PLapl G)$ is a closed positive (1,1)-current on $\CP^m$
satisfying the equivariance relation: $R^* \Green = d \cdot \Green$.
\end{thm}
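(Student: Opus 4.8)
The plan is to deduce this directly from Theorem \ref{Green potential} together with the dictionary between closed positive $(1,1)$-currents on $\CP^m$ and log-homogeneous plurisubharmonic functions on $\C^{m+1}$ recalled in Appendix \ref{APPENDIX:CURRENTS}. First I would invoke Theorem \ref{Green potential}: the Green potential $G = \lim_n \frac{1}{d^n}\log\|\hat R^n\|$ exists in $L^1_\loc(\C^{m+1})$, is plurisubharmonic, and satisfies $G(\la z) = G(z) + \log|\la|$ for $\la \in \C^*$. This is precisely the homogeneity condition (\ref{EQN:LOG_HOMOG}) with constant $c=1$, so the construction from Appendix \ref{APPENDIX:CURRENTS} associates to $G$ a closed, positive $(1,1)$-current $\Green := \pi_*(\De_p G)$ on $\CP^m$, given in an affine chart with local section $\sigma_i$ by the local potential $G \circ \sigma_i$. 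Positivity is automatic because $G$ is PSH (so $\De_p G \geq 0$), and closedness is built into that construction, the trivial extension across indeterminacy points being closed by the Harvey--Polking result quoted there.

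For the equivariance relation I would use the second equivariance property of the Green potential, $G \circ \hat R = d\, G$, together with the pullback formula for currents on $\CP^m$ from Appendix \ref{APPENDIX:CURRENTS}, namely $R^* \pi_*(\De_p H) = \pi_*(\De_p(H \circ \hat R))$. Applying it with $H = G$ and using linearity of the pluri-Laplacian $\De_p = \frac{i}{\pi}\di\dibar$ gives
\[
R^* \Green = R^* \pi_*(\De_p G) = \pi_*\bigl(\De_p(G \circ \hat R)\bigr) = \pi_*\bigl(\De_p(d\, G)\bigr) = d\, \pi_*(\De_p G) = d\, \Green .
\]
Since both sides are closed positive $(1,1)$-currents on $\CP^m$, this is an identity of currents, which is the assertion.

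There is no genuine obstacle here: the statement is a formal corollary of Theorem \ref{Green potential}. The only points worth a sentence of care are (i) that the additive constant in (\ref{la-equiv}) is exactly $1$, so that $G$ really defines a current on $\CP^m$ rather than merely a section over a line bundle; and (ii) that the pullback formula $R^*\pi_*(\De_p H) = \pi_*(\De_p(H\circ\hat R))$ remains valid across the indeterminacy locus $I(R)$ — this is exactly the content of the corresponding statement in Appendix \ref{APPENDIX:CURRENTS}, where $R^*$ of a closed positive current is defined by pulling back local potentials on $\CP^m \smallsetminus I(R)$ and then extending trivially, the extension being closed. One may additionally remark that $\Green$ is the unique closed positive $(1,1)$-current admitting a pluripotential $H$ with $H\circ\hat R = dH$ normalized by (\ref{la-equiv}), since any two such pluripotentials differ by a pluriharmonic function on $\C^{m+1}$ that is $\hat R$-invariant, hence constant.
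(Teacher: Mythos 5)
Your proof is correct and takes exactly the route the paper intends: the paper itself gives only the one-line remark ``Applying the pluri-Laplacian $\De_p$ to the Green potential, we obtain:'' before stating the theorem as a citation to Sibony, and your argument simply fills in that step by combining Theorem~\ref{Green potential} with the appendix dictionary between log-homogeneous PSH functions on $\C^{m+1}$ and closed positive $(1,1)$-currents on $\CP^m$, together with the pullback formula $R^*\pi_*(\De_p H) = \pi_*(\De_p(H\circ\hat R))$. Your two ``points of care'' (the additive constant being $1$ and the trivial extension across $I(R)$ being closed) are exactly the ones worth noting, and your uniqueness remark at the end is a correct and useful observation even though it is not part of the stated theorem.
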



\noindent
The current $\Green$  is called the {\it Green current} of $R$.

The set of {\em normal}\footnote{Not to be confused with the notion of normal families.}
points for an algebraically stable rational map 
$R: \CP^m~\ra~\CP^m$ is:
\begin{eqnarray*}
\Regular := \left\{\begin{array}{cc} x \in \CP^m \, : & \, \mbox{there exits 
neighborhoods $U$ of $x$ and $V$ of $I(R)$} \\  & \mbox{ so that
$f^n(U) \cap V = \emptyset$ for every $n \in \N$.}\end{array}\right\}
\end{eqnarray*}
The normal points form an open subset of $\CP^m$.

One primary interest in the Green current $\Green$ is the following
connection between its support $\supp \Green$ and the Julia set $J_R$.
(See \S \ref{SUBSEC:JULIA_AND_FATOU} for the definitions of the Fatou and Julia
sets.) Note that $\supp \Green$ is closed and backwards invariant, $R^{-1}
\supp \Green \subset \supp \Green$, since $R^* \Green = d\cdot \Green$.

\begin{thm}[see \cite{S_PANORAME},Thm. 1.6.5]\label{Green current vs Fatou}
Let $f: \CP^m \ra \CP^m$ be an algebraically stable rational map.   Then:
\begin{eqnarray*}
J_R \cap \Regular \subset \supp \Green \subset J_R.
\end{eqnarray*}
\end{thm}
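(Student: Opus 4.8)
The plan is to establish the two inclusions separately, and throughout to work with the Green potential $G=\lim_n\frac1{d^n}\log\|\hat R^n\|$ from Theorem~\ref{Green potential}, its push-down $\Green=\pi_*(\De_p G)$ from Theorem~\ref{Green current}, and the Fubini--Study current $\om_{FS}:=\pi_*(\De_p\log\|\cdot\|)$. Applying $\pi_*\De_p$ to the $L^1_\loc$-convergence of Theorem~\ref{Green potential}, together with the pull-back formula $(R^n)^*\om_{FS}=\pi_*(\De_p\log\|\hat R^n\|)$, gives that $\frac1{d^n}(R^n)^*\om_{FS}\to\Green$ weakly; this is the bridge between currents and potentials that both halves of the argument exploit. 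One also records the elementary homogeneity/compactness estimate $\|\hat R(w)\|\ge c_\delta\|w\|^d$ whenever $[w]$ lies at distance $\ge\delta$ from $I(R)$; telescoping it yields $\frac1{d^n}\log\|\hat R^n(w)\|\ge\log\|w\|-\frac{|\log c_\delta|}{d-1}$ whenever the \emph{entire} forward orbit of $[w]$ stays $\delta$-away from $I(R)$, and hence, combined with the trivial bound $\frac1{d^n}\log\|\hat R^n(w)\|\le\log\|w\|+\log C_*$ (valid for all $w$), that $|G(w)-\log\|w\||\le C_\delta$ for such $w$. Here (and below) I use that $R$ is algebraically stable, so the $n$-fold composition $\hat R^n$ is the reduced lift of $R^n$ and $\hat R^n\circ\sigma$ is holomorphic and nowhere zero on any open $U$ with $U\cap I(R^n)=\emptyset$ carrying a holomorphic lift $\sigma$ of $U\hookrightarrow\CP^m$.

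For $\supp\Green\subset J_R$: I would show that $\Green$ vanishes on the Fatou set. Fix $x\in F_R$ and a ball $B\ni x$ on which all $R^n$ are holomorphic and $\{R^n|_B\}$ is normal, and fix a holomorphic lift $\sigma$ of $B$. Some subsequence $R^{n_k}\to h$ in $\Hol(B,\CP^m)$; shrinking $B$ so $h(B)$ lies in an affine chart, rescale $\hat R^{n_k}\circ\sigma$ by its (nonvanishing) distinguished coordinate to get holomorphic lifts $\tau_k\to\tilde h$ locally uniformly, with $\tilde h$ a nowhere-zero lift of $h$. Then $\log\|\tau_k\|$ is locally uniformly bounded, so $\frac1{d^{n_k}}\log\|\tau_k\|\to0$ locally uniformly, hence $\frac1{d^{n_k}}(R^{n_k})^*\om_{FS}|_B=\De_p\bigl(\tfrac1{d^{n_k}}\log\|\tau_k\|\bigr)\to0$ weakly; since the whole sequence $\frac1{d^n}(R^n)^*\om_{FS}$ converges to $\Green$, we get $\Green|_B=0$, i.e. $x\notin\supp\Green$.

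For $J_R\cap\Regular\subset\supp\Green$: I would prove the contrapositive. Let $x\in\Regular$ and $x\notin\supp\Green$. Regularity furnishes a ball $B\ni x$ and a neighbourhood $V\supset I(R)$ with $R^n(B)\cap V=\emptyset$ for all $n$; thus every $R^n$ is holomorphic on $B$ and the whole forward orbit of $R^n(B)$ stays a fixed distance $\delta>0$ from $I(R)$. Taking a holomorphic lift $\sigma$ on $B$ and applying the telescoping estimate to $w=\hat R^n(\sigma(z))$ together with $G\circ\hat R^n=d^n G$, I obtain the uniform bound $|\log\|\hat R^n\circ\sigma\|-d^n u|\le C$ on $B$, where $u:=G\circ\sigma$ is a local potential of $\Green$ on $B$. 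Since $x\notin\supp\Green$, after shrinking $B$ I may assume $u$ is pluriharmonic, hence $u=\Re\phi$ for a holomorphic $\phi$ on $B$. Now set $g_n:=e^{-d^n\phi}\,(\hat R^n\circ\sigma)$: each $g_n$ is holomorphic, $\pi\circ g_n=R^n$, and $\|g_n\|=e^{-d^n u}\|\hat R^n\circ\sigma\|\in[e^{-C},e^{C}]$ on $B$. So $\{g_n\}$ is a uniformly bounded family of $\C^{m+1}\sm\{0\}$-valued holomorphic maps with norms bounded below; by the classical Montel theorem it is normal, and any subsequential limit $g_\infty$ has $\|g_\infty\|\ge e^{-C}>0$, so $\pi\circ g_\infty$ is holomorphic and $R^{n_k}=\pi\circ g_{n_k}\to\pi\circ g_\infty$ locally uniformly. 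Hence $\{R^n\}$ is normal near $x$, i.e. $x\in F_R$.

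The main obstacle is the uniform estimate $|\log\|\hat R^n\circ\sigma\|-d^n u|\le C$ around a regular point, which is where the regularity hypothesis is genuinely used: it guarantees that not merely $R^n(B)$ but its entire forward orbit avoids a fixed neighbourhood of $I(R)$, and this is exactly what validates the telescoping lower bound $\|\hat R^n(w)\|\gtrsim\|w\|^{d^n}$ and thereby traps $\frac1{d^n}\log\|\hat R^n\circ\sigma\|$ between two fixed functions (in particular $u$ is finite, continuous, and the convergence is locally uniform). The second hypothesis, $x\notin\supp\Green$, enters only to pass from ``$u$ continuous'' to ``$u=\Re\phi$ pluriharmonic'', which is precisely what allows the non-holomorphic weight $e^{d^n u}$ to be replaced by $|e^{d^n\phi}|$ and so makes $g_n$ holomorphic, reducing normality of $\{R^n\}$ to ordinary Montel. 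The remaining points --- that $\hat R^n$ is the reduced lift of $R^n$ (algebraic stability), that the lifts of a convergent sequence in $\Hol(B,\CP^m)$ can be rescaled to converge, and that rescaling a lift by a nowhere-zero holomorphic factor leaves $\De_p$ of its log unchanged --- are routine.
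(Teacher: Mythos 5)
The paper does not prove this theorem itself --- it is quoted directly from Sibony's survey (the citation appears in the statement), so there is no internal proof to compare against. Your argument is correct and is essentially the standard proof from that reference: on the Fatou set you normalize the lifts $\hat R^{n_k}\circ\sigma$ of a convergent subsequence by a nonvanishing coordinate so that $\log$ of their norms is locally bounded, giving $\frac1{d^{n_k}}(R^{n_k})^*\omega_{FS}\to 0$ on the ball and hence $\Green=0$ there; and around a regular point off $\supp\Green$, the telescoping bound $|\log\|\hat R^n\circ\sigma\|-d^n u|\le C$ (valid precisely because the whole forward orbit avoids a neighbourhood of $I(R)$) plus pluriharmonicity $u=\Re\phi$ yields the holomorphic, norm-pinched lifts $g_n=e^{-d^n\phi}\hat R^n\circ\sigma$, and Montel on $\{g_n\}$ gives normality of $\{R^n\}$.
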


\noindent
An algebraically stable rational map $R: \CP^2 \ra \CP^2$ for which $\supp \Green
\subsetneq J_R$ is given in \cite[Example 2.1]{FS}.


\section{Open Problems}\label{APP:PROBLEMS}

\begin{problem}[\bf Existence of Fisher and Lee-Yang-Fisher distributions]\label{PROP:GLOBAL_LIMIT}
For which classical lattices {\rm (}$\mathbb{Z}^d$ for $d \geq 2$, etc{\rm )} does the limit
{\rm (\ref{EQN:C2_L1_LIMIT})} exist?   As explained in Prop. \ref{PROP:L1_GLOBAL_LIMIT}, this
would justify existence of the limiting distributions of Lee-Yang-Fisher
zeros for these lattices.

Similarly, for which classical three and higher-dimensional lattices {\rm (}$\mathbb{Z}^d$ for $d \geq 3$, etc{\rm )}
does the limit {\rm (\ref{EQN:C2_L1_LIMIT})} exist?  It would justify the existence of a limiting distribution of
Fisher zeros for these lattices.
\end{problem}

\begin{problem}[\bf Geometric properties of the Lee-Yang-Fisher current]\label{PROB:LAMINAR}

The theory of geometric currents has become increasingly useful in complex
dynamics, see \cite{RUELLE_SULLIVAN,BLS,Du,DINH,dT,DDG} as a sample.  

The Green current $\Green$ is strongly laminar in a neighborhood of
$\BOTTOMmig$.  The structure is given by the stable lamination of
$\BOTTOMmig$ {\rm (}see \S \ref{SUBSEC:FIXED POINTS}{\rm )} together with transverse
measure obtained under holomomy from the Lebesgue measure on $\BOTTOMmig$.
However, $\Green$ is not strongly laminar in a
neighborhood of the topless Lee-Yang ``cylinder'' $\Cmigtl$. 

 One can see this also follows:  a disc within the invariant line $\LINV$
centered at $\LINV \cap \BOTTOMmig$ is within the stable lamination of
$\BOTTOMmig$.  Therefore, an open neighborhood  within $\LINV$ of $\LINV \cap
\Cmigtl$ would have to be a leaf of the lamination.  However, $\Green$
restricts to $\LINV$ in a highly non-trivial way, coinciding
with the measure of maximal entropy for $\Rmig | \LINV$. {\rm (}It is supported on the Julia set
shown in Fig.~\ref{FIG:INVARIANT_LINE_JULIA}{\rm )}.

Does $\Green$ have a weaker geometric structure?  For example, is it non-uniformly laminar \cite{BLS,Du} or woven \cite{DINH,dT,DDG}? 
\end{problem}

\begin{problem}[\bf Support for the measure of maximal entropy]\label{PROB:MEASURE_CP2} 
What can be said about the support of the measure of maximal entropy $\nu$ that
was discussed in \S \ref{SUBSEC:MEAS_MAX_ENT}?  Is the critical fixed point
$\FIXmig_c \in \LINV$ within $\supp \nu$?  A positive answer to this question
is actually equivalent to $\Cmig \cap \supp \nu \neq \emptyset$ and also to
$\supp \nu \cap \LINV = J_{\Rmig|\LINV}$.

\end{problem}

\begin{problem}[\bf Fatou Set] \label{PROB:GLOBAL_BASINS} In Thm. \ref{THM:SOLID_CYLINDERS} we showed that
certain ``solid cylinders'' are in $\WW^s(\CFIXmig)$ and $\WW^s(\CFIXmig')$.  Computer experiments suggest a much stronger result: 
\begin{conj}
$\WW^s(\CFIXmig) \cup \WW^s(\CFIXmig')$ is the entire Fatou set for $\Rmig$.
\end{conj}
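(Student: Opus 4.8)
The plan is to take an arbitrary Fatou component $F$ of $\Rmig$ (a connected component of $F_\Rmig$) and to prove $F\subset \WW^s(\CFIXmig)\cup \WW^s(\CFIXmig')$. Since $F_\Rmig$ is forward invariant, each $\Rmig^n(F)$ is a connected subset of $F_\Rmig$, hence lies in a single Fatou component and is disjoint from $J_\Rmig=\supp\Green=\overline{\bigcup_k \Rmig^{-k}(\LINV)}$ (Propositions~\ref{PROP:JULIA} and~\ref{PROP:JULIA2}). In particular $\Rmig^n(F)$ never meets the curves $X_1=\LINV$, $X_2=\Ltwo$, $X_3$, $X_4$ from the proof of Proposition~\ref{PROP:JULIA}, since all of these lie inside $\bigcup_k\Rmig^{-k}(\LINV)\subset J_\Rmig$. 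Thus for every $n$ the only curve among $X_0,\dots,X_4$ that $\Rmig^n(F)$ can meet is $X_0=\Lzero$, and two cases arise: either (a) $\Rmig^{n_0}(F)\cap \Lzero\neq\emptyset$ for some $n_0$, or (b) $\Rmig^n(F)\subset M:=\CP^2\sm\bigcup_{i=0}^4 X_i$ for all $n\geq 0$.

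Case (a) can be settled directly. By Proposition~\ref{PROP:JULIA_NEAR_LZERO}, $J_\Rmig\cap\Lzero=\BOTTOMmig$, so $\Lzero\sm J_\Rmig$ is the disjoint union of the disk $\{|W/U|<1\}\subset \WW^s(\CFIXmig)$ and the disk $\{|W/U|>1\}\subset \WW^s(\CFIXmig')$ (see \S\ref{SUBSEC:FIXED POINTS}, where $\Rmig|\Lzero$ is the fourth-power map). Hence the Fatou component $F_{n_0}$ containing $\Rmig^{n_0}(F)$ meets one of the open basins, say $\WW^s(\CFIXmig)$. On the open set $\WW^s(\CFIXmig)\cap F_{n_0}$ the full sequence $\Rmig^m$ converges to $\CFIXmig$ (as $\CFIXmig$ is superattracting), so by normality of $\{\Rmig^m\}$ on $F_{n_0}$ and the identity principle (every subsequential holomorphic limit that equals $\CFIXmig$ on an open set is identically $\CFIXmig$), the whole sequence $\Rmig^m$ converges to $\CFIXmig$ on $F_{n_0}$; therefore $F_{n_0}\subset \WW^s(\CFIXmig)$ and $F\subset\Rmig^{-n_0}(\WW^s(\CFIXmig))=\WW^s(\CFIXmig)$. (The same argument with $\CFIXmig'$ covers the other sub-case.)

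It remains to handle case (b): a Fatou component whose entire forward orbit avoids all five curves $X_i$. Here I would analyse the subsequential limits $g=\lim_k \Rmig^{n_k}|_F$ (a normal family with limits in $\overline M$), organized by the generic rank of $g$. If this rank is $1$, then $g(F)$ lies in an algebraic curve $C$; one expects $C$ to be preperiodic, so the problem reduces to the dynamics on the finitely many preperiodic curves, and one would need to check that each such curve either lies in $J_\Rmig$ or carries the simple dynamics of $\Lzero$ ($\xi\mapsto\xi^4$) or $\Lone$ ($w\mapsto w^2$), whose Fatou sets are $\WW^s(\CFIXmig)\cup\WW^s(\CFIXmig')$; pushing back as in case (a) then gives $F\subset \WW^s(\CFIXmig)\cup\WW^s(\CFIXmig')$. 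If the rank is $0$, then $g$ is constant and — assuming a no-wandering-domains statement — $F$ is eventually the basin of a periodic orbit; since the only periodic orbits with two-dimensional basins known to us are $\{\CFIXmig\}$ and $\{\CFIXmig'\}$, one must then show there are no others (no new attracting, parabolic, or semi-parabolic cycles), using the invariant lines $\LINV$, $\Ltwo$ inside $J_\Rmig$, the critical-value picture of Lemma~\ref{description of folds}, and the volume estimates of \S\ref{SEC:VOL_ESTIMATES}. If the rank is $2$, then $F$ is a rotation (Siegel-type) domain on which $\Rmig$ acts as an automorphism; I would rule this out by excluding periodic points with unimodular multipliers, exploiting the semiconjugacy $\correspond$ to $\Rphys$ and the gap $\deg \Rmig^n=4^n<8^n=\deg_{top}\Rmig^n$.

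The crux is case (b), and specifically two ingredients that are genuinely open: the \emph{absence of wandering Fatou components} for $\Rmig$ and the \emph{classification of periodic Fatou components} (ruling out parabolic and rotation components other than the two superattracting basins). Neither is available from the holomorphic-endomorphism theory, since $\Rmig$ has indeterminacy points and a collapsing line, and wandering Fatou components are known to occur for some polynomial maps of $\C^2$; so one cannot argue abstractly and must use the fine structure of $\Rmig$. A plausible attack on no wandering domains would combine the one-dimensional fact (from \cite{BL}) that $\Rphys|\,\LLINV$ has no wandering or rotation domains with the density of $\bigcup_n\Rmig^{-n}(\LINV)$ in $J_\Rmig$ and the $C^\infty$ lamination structure of $\WW^s_{\C,\loc}(\BOTTOMmig)$ near $\Lzero$, through a pullback/renormalization argument; but turning this into a proof, while simultaneously excluding interior rotation domains, is exactly what goes beyond the present paper.
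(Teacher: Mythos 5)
The statement you are trying to prove is not a theorem of the paper: it appears as a \emph{Conjecture} inside Problem \ref{PROB:GLOBAL_BASINS}, supported only by computer experiments. The paper proves only the much weaker Proposition \ref{PROP:SOLID_CYLINDERS2} / Theorem \ref{THM:SOLID_CYLINDERS}, namely that two explicit solid cylinders lie in $\WW^s(\CFIXmig)$ and $\WW^s(\CFIXmig')$, and it does so by a Bezout intersection count against the curves $X_0,\dots,X_4$ rather than by a classification of Fatou components. So there is no proof in the paper against which your argument could be matched.

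Your write-up is a strategy outline rather than a proof, and to your credit you say so. Case (a) — a component whose orbit eventually meets $\Lzero$ — is handled correctly: $J_\Rmig\cap\Lzero=\BOTTOMmig$ by Proposition \ref{PROP:JULIA_NEAR_LZERO}, so such a component meets one of the two basin disks in $\Lzero$, and the identity-principle argument for subsequential limits then absorbs the whole component into that basin. But the entire content of the conjecture sits in case (b), and there your argument rests on two assertions that are themselves open for $\Rmig$: the absence of wandering Fatou components and the nonexistence of periodic Fatou components other than the two superattracting basins (no parabolic, semi-parabolic, or rotation domains). Because $\Rmig$ has indeterminacy points and the collapsing line $\Ltwo$, none of the classification machinery for holomorphic endomorphisms of $\CP^2$ applies, and wandering domains are known to exist for some polynomial maps of $\C^2$, so neither assertion can be waved through abstractly. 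The rank-stratified analysis of subsequential limits you sketch is a reasonable plan of attack, but as written each branch of it ends in ``one would need to check'' or ``one must then show'' — these are precisely the missing ideas, not steps that follow from anything established in the paper. The conjecture therefore remains open, and your proposal should be read as a (sensible) research program, not a proof.
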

\noindent
\end{problem}

\begin{problem}[\bf Julia Set] \label{PROB:GLOBAL_JULIA}  
Prop. \ref{PROP:JULIA_NEAR_LZERO} gives that in a neighborhood of
$\BOTTOMmig$, $J_{\Rmig}$ is a $C^\infty$ $3$-manifold.   What can be said about the global
topology of $J_\Rmig$?
\end{problem}

\begin{rem} Note that each of the above problems \ref{PROB:LAMINAR} -- \ref{PROB:GLOBAL_JULIA} has a natural  counterpart for~$\Rphys$.
\end{rem}


\bsk

\renewcommand\refname{References from  dynamics and complex geometry}

\msk

\renewcommand\refname{References from  mathematical physics}

\end{document}